\newtheorem{thm}{Theorem}[section]
\newtheorem{prop}[thm]{Proposition}
\newtheorem{lem}[thm]{Lemma}
\newtheorem{cor}[thm]{Corollary}
\newtheorem{ex}[thm]{Example}
\newtheorem{pb}[thm]{Problem}
\newtheorem{obs}[thm]{Observation}
\newtheorem{remark}[thm]{Remark}
\long\def\delete#1{}
\begin{document}
\openup 0.5\jot

\title{Components of domino tilings under flips in quadriculated tori}
\author[1,2]{Qianqian Liu\thanks{E-mail: \texttt{liuqq2016@lzu.edu.cn.}}}
\author[1]{Yaxian Zhang\thanks{E-mail: \texttt{yxzhang2016@lzu.edu.cn.}}}
\author[1]{Heping Zhang\footnote{The corresponding author. E-mail: \texttt{zhanghp@lzu.edu.cn.}}}

\affil[1]{\small School of Mathematics and Statistics, Lanzhou University, Lanzhou, Gansu 730000, China}
\affil[2]{\small College of Science, Inner Mongolia University of Technology, Hohhot, Inner Mongolia 010010, China}
\date{}

\maketitle

\setlength{\baselineskip}{20pt}
\noindent {\bf Abstract}:
In a region $R$ consisting of unit squares, a (domino) tiling is a collection of dominoes (the union of two adjacent squares) which pave fully the region. The flip graph of $R$ is defined on the set of all tilings of $R$ where two tilings are adjacent if we change one from the other by a flip (a $90^{\circ}$ rotation of a pair of side-by-side dominoes).
If $R$ is simply-connected, then its flip graph is connected.
By using homology and cohomology, Saldanha, Tomei, Casarin and Romualdo obtained a criterion to decide if two tilings are in
the same component of flip graph of quadriculated surface. By a graph-theoretic method, we obtain that the flip graph of a non-bipartite quadriculated torus consists of two isomorphic components. As an application, we obtain that the forcing numbers of all perfect matchings of each non-bipartite quadriculated torus form an integer-interval. For a bipartite quadriculated torus, the components of the flip graph is more complicated, and we use homology to obtain a general lower bound for the number of components of its flip graph.

\vspace{2mm} \noindent{\it Keywords}: Domino tiling; Flip; Perfect matching; Resonance graph; Quadriculated torus; Homology group.
\vspace{2mm}


\section{\normalsize Introduction}
In a quadriculated region $R$ consisting of unit squares with vertices in $\mathbb{Z}^2$, a \emph{domino} (or \emph{dimer}) is the union of two adjacent squares, and a (\emph{domino}) \emph{tiling} is a covering of $R$ by dominoes with disjoint interiors. In another language, $R$ can be transformed into a graph $D(R)$ called \emph{inner dual} of $R$: vertices of $D(R)$ are unit squares in $R$ and two vertices are adjacent if their corresponding squares have an edge in common. Then a domino of $R$ corresponds to an edge of $D(R)$ and a tiling corresponds to a perfect matching of $D(R)$.

A classical model is to determine the number of ways of an $n\times m$ chessboard ($nm$ even) can be covered by dimers, known as the \emph{dimer problem} \cite{Ka}, which
has important applications in both statistical mechanics and physical system. In other words, the dimer problem is to compute the number of perfect matchings in its inner dual. For rectangle lattices, the dimer problem was solved independently by Fisher \cite{Fisher}, Kasteleyn \cite{Ka}, Temperley and Fisher \cite{TF}. Further, Cohn et al. \cite{cohn} studied domino tilings of simply-connected regions of arbitrary shape on the plane by a variational method. Especially,
Thurston \cite{Th90} produced a necessary and sufficient condition for a simply-connected region in the plane to be tileable by dominoes.
There are also some results of dimer problem for 3-dimensional lattices \cite{RZ1,RZ2,lo,LC}.

Given a tiling of a region $R$, a \emph{flip} is a local move: lifting two parallel dominoes and then placing them back in a unique different position. Define the \emph{flip graph} $\mathcal{T}(R)$ on the set of all tilings of $R$ where two tilings are adjacent if we convert one from another by a flip. If $R$ is simply-connected, then $\mathcal{T}(R)$ is connected \cite{Th90,T1}. However, the result is not necessarily hold for other regions. Saldanha, Tomei, Casarin and Romualdo \cite{T1} gave three characterizations (combinational, homology and height section versions) for two tilings of a plane region in the same component of the flip graph. By homology and cohomology with local coefficients, they \cite{T1} provided a necessary and sufficient condition for two tilings in the same component of flip graphs of quadriculated surfaces.
By an elementary graph theory, we can obtain clearly the two components of flip graphs of non-bipartite quadriculated tori.

\begin{thm} \label{comnum}The flip graph of a non-bipartite quadriculated torus consists of two isomorphic components.
\end{thm}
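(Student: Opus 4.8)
The plan is to isolate a single $\mathbb{Z}_2$-valued flip invariant, show it is complete and takes exactly two values, and then produce a symmetry swapping the two resulting classes. Fix a non-contractible simple closed curve $\gamma$ on the torus $T$ in general position with respect to the quadriculation, and for a tiling $M$ let $f_\gamma(M)\in\mathbb{Z}_2$ be the number of dominoes of $M$ straddling $\gamma$, reduced modulo $2$. First I would check that $f_\gamma$ is constant on each flip component: a flip replaces the two matched edges $\{e_1,e_3\}$ of a face $4$-cycle $e_1e_2e_3e_4$ of the inner dual by $\{e_2,e_4\}$, so $f_\gamma$ changes by $|\gamma\cap\{e_1,e_2,e_3,e_4\}|\bmod 2$; since $\gamma$ is a closed curve and the face is a disk, it crosses the boundary of that face an even number of times, and the change vanishes. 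Thus $f_\gamma:\mathcal{T}(T)\to\mathbb{Z}_2$ is a flip invariant.

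Next I would bound the number of values. Choosing curves $\gamma_1,\gamma_2$ generating $H_1(T;\mathbb{Z}_2)$, the symmetric difference $M_1\triangle M_2$ of two tilings is a disjoint union of $M_1/M_2$-alternating cycles, each of even length. Writing $w_i(Z)$ for the mod-$2$ number of crossings of a cycle $Z$ with $\gamma_i$, the length of a cycle $Z$ in the inner dual is congruent mod $2$ to $s_1w_1(Z)+s_2w_2(Z)$, where $(s_1,s_2)\in\mathbb{Z}_2^2$ records the failure of the checkerboard $2$-colouring along $\gamma_1,\gamma_2$; non-bipartiteness is exactly the statement $(s_1,s_2)\neq(0,0)$. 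Since every alternating cycle is even, each contributes $0$, so $s_1\big(f_{\gamma_1}(M_1)+f_{\gamma_1}(M_2)\big)+s_2\big(f_{\gamma_2}(M_1)+f_{\gamma_2}(M_2)\big)=0$. This nontrivial linear relation confines the pair $(f_{\gamma_1},f_{\gamma_2})$ to an affine line of $\mathbb{Z}_2^2$, i.e. to two values; in particular a single suitably chosen curve $\gamma$ already carries the whole invariant, and $\mathcal{T}(T)$ has at most two components.

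The isomorphism and the fact that both values occur I would extract from the unit translation $\tau$ by $(1,0)$, which permutes the unit squares and hence is an automorphism of the inner dual and of $\mathcal{T}(T)$. Because $\tau$ is homotopic to the identity it acts trivially on $H_1(T;\mathbb{Z}_2)$, so it shifts the invariant by the constant $[\,\tau M_0\triangle M_0\,]$; a direct count (in rectangular coordinates this is the parity of a column height, in general the relevant coordinate-sum parity, constrained by the fact that the total number of squares is even) shows that, since a tiling exists and the torus is non-bipartite, at least one of the translations $\tau_{(1,0)},\tau_{(0,1)}$ moves the invariant to its other value. Such a translation is then a fixed-point-free bijection of $\mathcal{T}(T)$ exchanging the two classes, whence, once each class is shown to be a single component, the two components are isomorphic and both nonempty.

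The substance of the argument, and the step I expect to be the main obstacle, is completeness: two tilings $M_1,M_2$ with $f_\gamma(M_1)=f_\gamma(M_2)$ must be joined by flips. By the relation above their full invariant then agrees, so $M_1\triangle M_2$ is null-homologous over $\mathbb{Z}_2$. The idea is to cut $T$ along $\gamma_1,\gamma_2$ into a simply-connected quadriculated region with boundary identifications and to run a Thurston-type height/potential argument \cite{Th90,T1}, which connects tilings of simply-connected regions by flips; the delicate point is that the potential is genuinely single-valued on $T$ precisely when the invariant vanishes, so that flips crossing the identified boundary can be organized consistently. Making this reduction rigorous for a non-bipartite torus, where the usual bipartite height function is unavailable and one must instead track the alternating structure of $M_1\triangle M_2$ cycle by cycle across the cuts, is the crux; once it is in place, each value of $f_\gamma$ is a single flip component, and combined with the previous paragraph this yields exactly two isomorphic components.
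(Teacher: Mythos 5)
Your overall architecture is sound and the first half genuinely parallels the paper: your $\mathbb{Z}_2$-invariant $f_\gamma$ and the relation $s_1\bigl(f_{\gamma_1}(M_1)+f_{\gamma_1}(M_2)\bigr)+s_2\bigl(f_{\gamma_2}(M_1)+f_{\gamma_2}(M_2)\bigr)=0$ forced by non-bipartiteness is a clean repackaging of the paper's Lemma on $M_1,M_2$ lying in different components (there proved by summing face-multiplicity parities along a row and a column), and your translation $\tau$ exchanging the two classes is exactly the paper's use of the automorphism $\varphi(v_{i,j})=v_{i,j+1}$. But there is a genuine gap, and you have correctly located it yourself: the completeness step. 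Knowing that the invariant takes at most two values says nothing about the number of components; you must show that any two tilings with equal invariant are joined by flips, and your proposed route --- cut along $\gamma_1,\gamma_2$ and run a Thurston-type height/potential argument --- does not go through, because the height function of Thurston and of Saldanha et al.\ is defined only for bipartite quadriculated regions (it increments by $\pm 1$ according to the black/white colour on the left of an edge). On a non-bipartite torus no such potential exists even locally in a consistent global sense, and ``tracking the alternating structure cycle by cycle across the cuts'' is not a proof but a restatement of the difficulty.

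The paper fills this gap by an entirely combinatorial induction that you would need to reconstruct. After reducing every non-bipartite torus to the form $T(2n+1,2m,2r)$ via the dual representation $T^{*}$, it inducts on the number $l$ of horizontal edges of a tiling $M$: if some $I$-cycle (a cycle formed by columns) meets two \emph{consecutive} horizontal edges of $M$ on the same side, a chain of flips along a shortest alternating path between their feet reduces $l$ by $2$; a parity analysis of $M\cap\nabla(C_i)$ for the $I$-cycles $C_i$ (using that each column has the odd length $2n+1$) shows such a pair always exists once $l\geq 2$, except in degenerate cases handled directly. The base case $l=0$ (all-vertical tilings) is itself nontrivial and is handled by propagating flips through the prisms $R_{j,j+1}$ between consecutive $I$-cycles, with an inner induction on how many pairs of adjacent $I$-cycles disagree in form. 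None of this is present in your proposal, and it is the substance of the theorem; as written, your argument establishes ``at least two components, swapped by a translation'' but not ``at most two.''
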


Let $G$ be a graph with the vertex set $V(G)$ and edge set $E(G)$. A \emph{perfect matching} $M$ of $G$ is a subset of $E(G)$ such that each vertex is adjacent to exactly one edge in $M$. The \emph{forcing number} of  $M$ is the smallest cardinality of a subset of $M$ that is contained in no other perfect matching of $G$. This concept was first introduced by Klein and Randi\'{c} \cite{3} in chemical literatures, which has important applications in resonance theory. The \emph{forcing spectrum} of $G$ is the set of forcing numbers over all perfect matchings of $G$. As an application of Theorem \ref{comnum}, we obtain the following result.

\begin{thm} \label{qpp1} The forcing spectrum of a non-bipartite quadriculated torus is continuous (forming an integer-interval).
\end{thm}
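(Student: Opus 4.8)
The plan is to reduce Theorem \ref{qpp1} to a single local stability statement and then propagate it along the flip graph. Write $D(R)$ for the inner dual of the torus $R$, so that tilings are perfect matchings of $D(R)$ and a flip is the rotation of a matching along a $4$-cycle. The central lemma I would prove is: \emph{if $M$ and $M'$ are perfect matchings of $D(R)$ that are adjacent in the flip graph, then $|f(M)-f(M')|\le 1$}. Granting this, I fix a connected component $\mathcal{C}$ of the flip graph and let $f_{\min}$ and $f_{\max}$ be the smallest and largest forcing numbers attained in $\mathcal{C}$. Choosing matchings $N,N'\in\mathcal{C}$ with $f(N)=f_{\min}$ and $f(N')=f_{\max}$ and a flip-path from $N$ to $N'$ inside $\mathcal{C}$, the lemma forces the sequence of forcing numbers along the path to change by at most $1$ at each step; the discrete intermediate value property then guarantees that every integer in $[f_{\min},f_{\max}]$ is realized. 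Hence the forcing numbers attained in $\mathcal{C}$ form an integer-interval.

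The crux is the flip lemma, which I would establish using the standard description of forcing sets: for a perfect matching $M$, a subset $S\subseteq M$ forces $M$ if and only if $S$ meets every $M$-alternating cycle, so that $f(M)$ is the minimum size of a transversal of the family of $M$-alternating cycles. Let the flip act on a $4$-cycle $C_0=v_1v_2v_3v_4$ with $M\cap C_0=\{v_2v_3,v_1v_4\}$ and $M'\cap C_0=\{v_1v_2,v_3v_4\}$, so that $M'=M\triangle C_0$ and the two matchings agree off $C_0$. The key observation is that each vertex of $C_0$ is matched in $M'$ by one of the edges $v_1v_2$, $v_3v_4$; hence every $M'$-alternating cycle that meets $C_0$ must contain $v_1v_2$ or $v_3v_4$. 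Starting from a minimum forcing set $S$ of $M$, I would build $S'\subseteq M'$ with $|S'|\le f(M)+1$ by discarding the (at most two) edges of $S$ lying on $C_0$ and adjoining $v_1v_2$ and/or $v_3v_4$, according to the three cases $|S\cap C_0|\in\{0,1,2\}$; a direct count shows the size stays within $f(M)+1$. To verify that $S'$ forces $M'$, note that any $M'$-alternating cycle avoiding $C_0$ is also $M$-alternating and is met by the retained part of $S$ (by an edge off $C_0$, which survives in $S'$ and is an $M'$-edge), while any cycle meeting $C_0$ is met by the adjoined new edges. The only delicate point is the case $S\cap C_0=\emptyset$, where the budget permits adjoining just one of $v_1v_2,v_3v_4$: here a local surgery replacing an edge $v_3v_4$ of such a cycle by the complementary path $v_3v_2v_1v_4$ turns it into an $M$-alternating cycle met by $S$ away from $C_0$, so the single adjoined edge suffices. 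Applying the same construction with the roles of $M$ and $M'$ reversed (the flip is an involution) yields $|f(M)-f(M')|\le 1$.

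It remains to glue the two components. By Theorem \ref{comnum} the flip graph has exactly two components, and the isomorphism between them is induced by an automorphism of the torus, hence of $D(R)$; since the forcing number of a perfect matching is invariant under automorphisms of the underlying graph, the two components attain exactly the same integer-interval. Therefore the forcing spectrum of $R$ equals this single integer-interval, which proves Theorem \ref{qpp1}. I expect the main obstacle to be the flip lemma, and within it specifically the size bookkeeping in the case $S\cap C_0=\emptyset$: keeping the increment at $+1$ rather than $+2$ is exactly what the cycle-surgery step is designed to secure, and getting this local argument airtight (covering all ways an $M'$-alternating cycle can meet $C_0$) is the delicate part of the whole proof.
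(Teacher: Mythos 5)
Your proposal is correct and follows essentially the same route as the paper: the paper invokes exactly your flip lemma (that a flip changes the forcing number by at most $1$) as a cited result of Afshani, Hatami and Mahmoodian, applies the discrete intermediate value property within each of the two components given by Theorem \ref{comnum}, and uses the fact that the isomorphism between the components is induced by the torus automorphism $\varphi$ to conclude the two components realize the same integer-interval. The only difference is that you prove the flip lemma from scratch (correctly, via the transversal-of-alternating-cycles characterization) where the paper simply cites it.
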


For a bipartite quadriculated torus, we associate it to a CW-complex $\boldsymbol{T}$. By using the flux of a tiling i.e. homology class $[t-t_\oplus]$ in 1-dimensional homology group $H_1(\boldsymbol{T},Z)$, where $t_\oplus$ is a fixed base tiling and $t$ is any tiling of the quadriculated torus, we obtain that the flip graph of $T(n,m,r)$ (defined in Section 2) has at least $2\lfloor\frac{n}{2}\rfloor-1$ non-trivial components. In particular, the flip graph of usual $2n\times 2m$ quadriculated torus contains at least $2n+2m-3$ non-trivial components.

We organize the rest of paper as follows. In Section 2 we present some basic properties of quadriculated tori. In Section 3 we give the concept of resonance graphs, and prove Theorem \ref{comnum} using graph theory. In Section 4, we discuss the number of components of flip graphs for bipartite quadriculated tori. In Section 5, we prove Theorem \ref{qpp1} and give two examples whose forcing spectra are not continuous for bipartite quadriculated tori.

\section{\normalsize Properties of quadriculated tori}%
In this section, we give some basic properties of quadriculated tori. As early as 1991, Thomassen \cite{Tho} classified all quadriculated graphs on torus and Klein bottle.

\begin{thm}\rm{\cite{Tho}}\label{tho} Let $G$ be a connected 4-regular quadriculated graph on torus or Klein bottle. Then $G$ is one of the graphs $Q_{k,m,r}$, $Q_{k,m,a}$, $Q_{k,m,b}$, $Q_{k,m,c}$, $Q_{k,m,e}$, $Q_{k,m,f}$, $Q_{k,m,g}$ or $Q_{k,m,h}$, where $Q_{k,m,r}$ and $Q_{k,m,e}$ are graphs on torus and the others are graphs on Klein bottle.
\end{thm}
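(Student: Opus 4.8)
The plan is to realize every such $G$ as a quotient of the standard square grid and then classify the groups that can arise. First I would record the numerical constraints. Since $G$ is $4$-regular, $2|E(G)| = 4|V(G)|$, and since every face is a quadrilateral, $2|E(G)| = 4|F(G)|$; hence $|E(G)| = 2|V(G)|$ and $|F(G)| = |V(G)|$, so $|V(G)| - |E(G)| + |F(G)| = 0$. This confirms that quadrilateral faces together with $4$-regularity are compatible only with Euler characteristic $\chi = 0$, i.e. exactly the torus and the Klein bottle.

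Second, I would pass to the universal cover. Lifting the cellular embedding of $G$ gives a simply connected $4$-regular quadrangulation $\widetilde{G}$ embedded in $\mathbb{R}^2$ (since $\chi = 0$). The key rigidity step is to show that any such $\widetilde{G}$ is isomorphic, as an embedded complex, to the standard integer grid $G_\infty$, the $1$-skeleton of the tiling of $\mathbb{R}^2$ by unit squares. I would prove this using straight-ahead walks: at each vertex the four edges split into two opposite pairs, so a walk that always leaves opposite to where it entered is well defined; in a simply connected quadrangulation these walks are bi-infinite, embedded, and fall into two transverse families of parallel lines with any two lines from different families meeting exactly once, which forces the grid. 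Consequently $G \cong G_\infty / \Gamma$, where $\Gamma$ is the deck-transformation group, a subgroup of $\operatorname{Aut}(G_\infty)$ acting freely and properly discontinuously with compact quotient.

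Third, I would identify $\operatorname{Aut}(G_\infty)$ with the wallpaper group $p4m$, the full isometry group of the square lattice (generated by the translations $\mathbb{Z}^2$, the order-$4$ rotations, and the reflections in the grid lines and diagonals), and then split into cases by surface. If $G$ lies on the torus, $\Gamma$ is orientation-preserving; since every nontrivial rotation has a fixed point, freeness rules rotations out, so $\Gamma$ is a rank-$2$ group of translations, i.e. a finite-index sublattice of $\mathbb{Z}^2$. Choosing a reduced (Hermite) basis $\{(k,0),(r,m)\}$ with $k,m \ge 1$ and $0 \le r < k$, then reducing by the point-group symmetry $D_4$ to eliminate duplicates, yields the toroidal families recorded as $Q_{k,m,r}$ and the distinguished case $Q_{k,m,e}$. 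If $G$ lies on the Klein bottle, $\Gamma$ must contain an orientation-reversing element; freeness again forbids reflections (they fix a line), so $\Gamma$ is generated by glide reflections together with translations, and I would enumerate the finitely many conjugacy types of such fixed-point-free, cocompact subgroups, each glide axis and glide length contributing one of the remaining families $Q_{k,m,a}$, $Q_{k,m,b}$, $Q_{k,m,c}$, $Q_{k,m,f}$, $Q_{k,m,g}$, $Q_{k,m,h}$.

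The hard part is the Klein bottle enumeration together with proving the list is both exhaustive and irredundant: one must determine precisely which combinations of glide reflections and translations act freely and properly discontinuously (a condition that eliminates most candidate subgroups), reduce each surviving group to a canonical form, and then verify that no two canonical forms produce isomorphic embedded graphs while every admissible group is accounted for. The rigidity claim of the second step — that a simply connected $4$-regular plane quadrangulation is exactly $G_\infty$ — is the other delicate point, which I would isolate as a separate lemma proved through the global behavior of the straight-ahead walks.
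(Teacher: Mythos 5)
The paper does not prove this statement at all: Theorem~\ref{tho} is quoted verbatim from Thomassen's 1991 paper \cite{Tho}, so there is no in-paper proof to compare against. Judged on its own, your covering-space strategy (Euler characteristic forces $\chi=0$; lift to the universal cover; identify the cover with the integer grid; classify fixed-point-free cocompact subgroups of $p4m$) is a legitimate route to such a classification, and it differs from Thomassen's own argument, which is elementary and works directly with the finite embedded graph via the structure of straight-ahead cycles and cutting along noncontractible curves rather than via deck groups and wallpaper groups.

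However, as a proof the proposal has two genuine gaps, both of which you flag but neither of which you close, and they are precisely where the content of the theorem lives. First, the rigidity lemma --- that a simply connected, locally finite, $4$-regular quadrangulation of the plane is the standard grid --- is only asserted. The straight-ahead walks pair opposite edges of each face into ``strips,'' but you must actually rule out a strip closing up into an annulus or self-intersecting (this needs a discrete Gauss--Bonnet or counting argument on the disk such a strip would bound), and you must prove the two transverse families of strips meet pairwise exactly once; without this the identification $\widetilde G\cong G_\infty$ and hence everything downstream is unsupported. Second, the Klein bottle case is reduced to ``enumerate the finitely many conjugacy types'' of free cocompact subgroups of $p4m$ containing a glide reflection, but that enumeration (axis-parallel versus diagonal glide axes, half-integer versus integer mirror positions, the compatible translation sublattices, and the reduction to canonical form with a proof of pairwise non-isomorphism) is exactly what produces the six families $Q_{k,m,a},\dots,Q_{k,m,h}$, and it is omitted. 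A related defect is that the theorem names specific graphs $Q_{k,m,r},Q_{k,m,e},\dots$ whose definitions appear only in \cite{Tho}; your argument would at best produce \emph{a} classification, and an extra matching step is needed to show your canonical forms coincide with Thomassen's list (note in particular that your Hermite-basis analysis of the torus case yields a single family, whereas Thomassen lists two toroidal families, later shown by Li \cite{classfy} to coincide --- so the correspondence is not the identity).
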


Further, Li \cite{classfy} proved that $Q_{k,m,r}$ and $Q_{k,m,e}$ can be reduced into one class of graphs.
\begin{thm}\rm{\cite{classfy}}\label{classfy} Any 4-regular quadriculated graph on torus is isomorphic to some $T(n,m,r)$.
\end{thm}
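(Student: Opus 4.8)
The plan is to realize every $4$-regular quadriculated graph on the torus as a quotient of the standard square tiling of the Euclidean plane by a rank-$2$ lattice, and then to normalize that lattice. Since each face of such a graph is a quadrilateral and each vertex has degree $4$, every vertex is surrounded by four squares whose angles sum to $360^{\circ}$; this flat link condition forces the universal cover to be the infinite square grid $\mathbb{Z}^2$ (with its usual tiling by unit squares). The torus is then recovered as $\mathbb{Z}^2/\Lambda$, where $\Lambda$, isomorphic to the fundamental group of the torus, acts by deck transformations as a finite-index sublattice of the translation lattice. Thus every such graph is determined, up to isomorphism, by a finite-index sublattice $\Lambda \subseteq \mathbb{Z}^2$.

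Next I would pin down when two such quotients are isomorphic as graphs. A graph isomorphism lifts to an automorphism of the square grid $\mathbb{Z}^2$, whose automorphism group is $\mathbb{Z}^2 \rtimes D_4$ (translations together with the eight symmetries of the unit square). Translations fix every sublattice, so $\mathbb{Z}^2/\Lambda_1 \cong \mathbb{Z}^2/\Lambda_2$ precisely when $\Lambda_2 = g\Lambda_1$ for some $g \in D_4$. Using the Hermite normal form, each $\Lambda$ has a basis of the shape $\{(n,0),(r,m)\}$ with $n,m > 0$ and $0 \le r < n$, and this canonical datum is exactly what defines $T(n,m,r)$ in Section 2. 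Hence every $4$-regular quadriculated torus is isomorphic to some $T(n,m,r)$.

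Since Theorem \ref{tho} already reduces the torus case to the two families $Q_{k,m,r}$ and $Q_{k,m,e}$, it then suffices to exhibit, for each of them, the corresponding sublattice and bring it to Hermite normal form. For $Q_{k,m,r}$ this is essentially immediate from its cylinder-with-shift description. For $Q_{k,m,e}$ one cuts the torus along a different non-contractible cycle and reglues, which geometrically amounts to a change of lattice basis by an element of $GL_2(\mathbb{Z})$; after this change one re-normalizes $r$ modulo $n$ to reach the canonical form and read off the parameters $(n,m,r)$.

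I expect the main obstacle to be the rigorous justification that the universal cover is the flat square tiling and that every graph isomorphism descends from a symmetry in $\mathbb{Z}^2 \rtimes D_4$, that is, that no exotic isomorphism exists which fails to respect the square-grid structure. Controlling this requires checking that the quadriculated structure (the face set, the vertex links, and the bipartite-or-non-bipartite coloring) is preserved by any graph isomorphism, so that the lift is forced to be a lattice symmetry rather than an arbitrary map of the abstract graph. Once this correspondence is secured, the remaining work, namely the Hermite-normal-form normalization and the explicit cut-and-reglue identifying each $Q_{k,m,e}$ with a suitable $T(n,m,r)$, is routine bookkeeping.
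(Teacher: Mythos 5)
Your argument is essentially correct, but it takes a genuinely different route from the paper's: the paper gives no proof of this statement at all---it is quoted from Li's thesis \cite{classfy}, and in context the intended derivation is Thomassen's classification (Theorem \ref{tho}) of the $4$-regular quadriculated torus graphs into the two families $Q_{k,m,r}$ and $Q_{k,m,e}$, followed by Li's reduction of both families to the single family $T(n,m,r)$. You instead argue directly: the flat link condition identifies the universal cover with the standard square grid, the deck group with a finite-index sublattice $\Lambda\subseteq\mathbb{Z}^2$, and the Hermite normal form of $\Lambda$ with the parameters $(n,m,r)$. This is cleaner and bypasses the case analysis, at the cost of two points you must make rigorous: (i) the development argument that a simply connected $4$-regular quadrangulation of the plane is the standard grid, and (ii) that the deck transformations, being free and orientation-preserving elements of $\mathbb{Z}^2\rtimes D_4$, must be translations (nontrivial rotations have fixed points, while reflections and glide reflections are orientation-reversing and would yield a Klein bottle rather than a torus). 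Note also that your second paragraph, determining when two quotients are isomorphic, is not needed for the statement as given: the theorem only asserts that every such graph is isomorphic to \emph{some} $T(n,m,r)$, which follows from the existence of a Hermite basis $\{(n,0),(r,m)\}$ of $\Lambda$ alone (with the harmless discrepancy $0\le r<m$ versus the paper's convention $1\le r\le m$ absorbed by $T(n,m,0)=T(n,m,m)$); the delicate question of whether an abstract graph isomorphism must respect the quadriculated structure, which you rightly flag as the main obstacle to your uniqueness discussion, can therefore be sidestepped entirely.
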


For $n\geq 1$ and $m\geq 2$, $T(n,m,r)$ represents a quadriculated torus obtained from an $n\times m$ chessboard by sticking the left and right sides together and then identifying the top and bottom sides with a torsion of $r$ squares (see Figure \ref{nota}(a)). Obviously, $T(n,m,0)=T(n,m,m)$ is the usual $n\times m$ torus. For convenience, we assume that  $1\leq r\leq m$.
\begin{figure}[h]
\centering
\includegraphics[height=4.5cm,width=16cm]{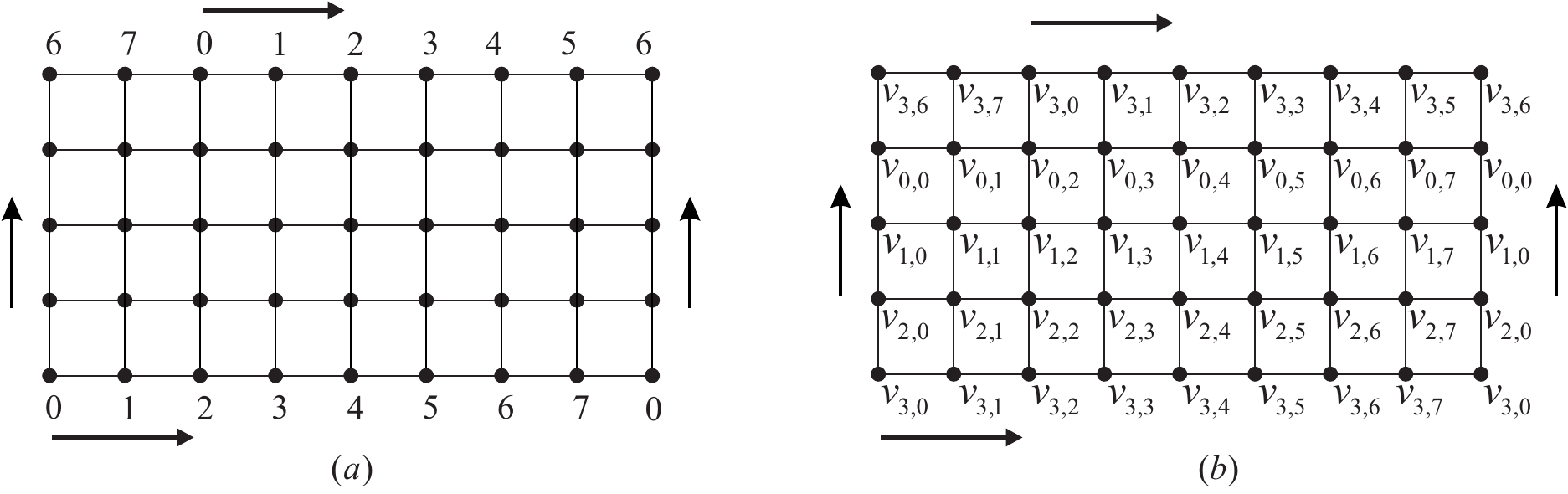}
\caption{\label{nota} Quadriculated torus $T(4,8,2)$ (left), and labels of its vertices (right).}
\end{figure}

For an integer $n\geq 1$, let $Z_n:=\{0,1,\dots,n-1\}$. According to positions of vertices in the chessboard, we label all vertices of $T(n,m,r)$ as $\{v_{i,j}| i\in Z_n, j \in Z_m\}$ (see Figure \ref{nota}(b)). Hence $v_{i,0}$ is adjacent to $v_{i,m-1}$ for $i\in Z_{n}$, and $v_{n-1,j}$ is adjacent to $v_{0,j+r}$ for $j\in Z_{m}$.

Define $\varphi(v_{i,j})=v_{i,j+1}$ where $i\in Z_n$ and $j\in Z_{m}$. Clearly, $\varphi$ is a translation of $T(n,m,r)$. By a simple argument, we obtain the following result.
\begin{lem}\label{auto} $\varphi$ is an automorphism on $T(n,m,r)$ which preserves the faces.
\end{lem}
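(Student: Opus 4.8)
The plan is to show two things: that $\varphi$ is a graph automorphism of $T(n,m,r)$, and that it carries faces to faces. The map is defined on vertices by $\varphi(v_{i,j}) = v_{i,j+1}$, where the second index is read modulo $m$; since this is a bijection of the index set $Z_n \times Z_m$ onto itself (it simply shifts the $j$-coordinate cyclically), $\varphi$ is automatically a bijection on $V(T(n,m,r))$. The substance is therefore to verify that $\varphi$ preserves adjacency, and then separately that it respects the face structure.

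First I would check that adjacency is preserved by running through the three families of edges described in Section~2. The ``interior'' edges come in two types: horizontal edges $v_{i,j} v_{i,j+1}$ and vertical edges $v_{i,j} v_{i+1,j}$ within the chessboard. For a horizontal edge, $\varphi$ sends $v_{i,j} v_{i,j+1}$ to $v_{i,j+1} v_{i,j+2}$, again a horizontal edge; for a vertical edge, $\varphi$ sends $v_{i,j} v_{i+1,j}$ to $v_{i,j+1} v_{i+1,j+1}$, again a vertical edge. Next come the identification edges. The left--right gluing gives edges $v_{i,0} v_{i,m-1}$ for $i \in Z_n$; since the second coordinate is taken mod $m$, $\varphi$ sends this to $v_{i,1} v_{i,0}$, which is the horizontal edge between columns $0$ and $1$, still an edge. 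Finally the top--bottom gluing with torsion gives edges $v_{n-1,j} v_{0,j+r}$ for $j \in Z_m$; applying $\varphi$ yields $v_{n-1,j+1} v_{0,j+1+r}$, which is exactly the torsion edge associated with index $j+1$, hence again an edge. So every edge maps to an edge, and since $\varphi$ is a bijection with inverse $v_{i,j} \mapsto v_{i,j-1}$ (which by the same check sends edges to edges), $\varphi$ is an automorphism.

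Second I would argue that $\varphi$ preserves faces. Each quadrilateral face of $T(n,m,r)$ is bounded by four vertices of the form $\{v_{i,j}, v_{i,j+1}, v_{i+1,j}, v_{i+1,j+1}\}$ (with indices interpreted under the cyclic and torsion identifications). Applying $\varphi$ shifts every such boundary cycle by one in the $j$-direction, sending a face with corners $\{v_{i,j}, v_{i,j+1}, v_{i+1,j}, v_{i+1,j+1}\}$ to the face $\{v_{i,j+1}, v_{i,j+2}, v_{i+1,j+1}, v_{i+1,j+2}\}$. Because the first step already showed all four boundary edges map to boundary edges of this shifted quadrilateral, the image four-cycle bounds a genuine face. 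Thus $\varphi$ maps the set of faces bijectively to itself.

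The only genuinely delicate point, and the one I expect to be the main obstacle, is the careful bookkeeping of indices at the seams where the two identifications interact: one must confirm that the cyclic shift in $j$ is compatible with both the left--right gluing (a clean mod-$m$ shift) and the torsion gluing $v_{n-1,j} \sim v_{0,j+r}$, and in particular that a face straddling the torsion seam is sent to another such face rather than being broken. Once the edge computation at the torsion seam is done correctly—observing that shifting $j \mapsto j+1$ commutes with adding the fixed torsion $r$—the face-preservation follows without further difficulty, so the whole argument reduces to the ``simple argument'' of checking these finitely many edge and face types.
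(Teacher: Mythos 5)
Your proof is correct, and the paper itself gives no proof of this lemma at all—it simply asserts that it follows ``by a simple argument.'' Your case-by-case check of the three edge types (horizontal, vertical, torsion seam) and the observation that the shift $j\mapsto j+1$ commutes with the torsion $j\mapsto j+r$ is precisely the routine verification the authors have in mind, so there is nothing to compare beyond noting that you have supplied the details they omitted.
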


A graph $G$ is \emph{bipartite} if $G$ has a 2-coloring, i.e. the vertices of $G$ are colored in black and white such that the ends of each edge receive different colors.
Noting that $T(n,m,r)$ is not necessarily bipartite, we can easily decide whether a quadriculated torus is bipartite or not.
\begin{thm}\label{bipartite} $T(n,m,r)$ is bipartite if and only if both $m$ and $n+r$ are even.
\end{thm}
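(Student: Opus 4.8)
The plan is to prove both directions by elementary means, using the labelling $v_{i,j}$ and the adjacency rules recorded above. For sufficiency I would exhibit an explicit proper $2$-colouring; for necessity I would read off the two parity conditions from two explicitly chosen cycles, relying on the standard fact that a graph is bipartite if and only if it contains no odd cycle.

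For the \emph{if} direction, assume $m$ and $n+r$ are both even and colour $v_{i,j}$ black or white according to the parity of $i+j$. Every non-wrapping edge joins $v_{i,j}$ to $v_{i\pm 1,j}$ or $v_{i,j\pm 1}$, so its endpoints differ by one in exactly one coordinate and hence receive opposite colours. I then have to check the two families of identification edges. The horizontal wrap $v_{i,0}\sim v_{i,m-1}$ compares the parities of $i$ and $i+m-1$, which differ precisely because $m$ is even; the vertical wrap $v_{n-1,j}\sim v_{0,j+r}$ compares the parities of $(n-1)+j$ and $j+r$, whose difference $n-1-r$ is odd precisely because $n+r$ (equivalently $n-r$) is even. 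Thus the colouring is proper and $T(n,m,r)$ is bipartite.

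For the \emph{only if} direction, I would assume $T(n,m,r)$ is bipartite, so that every cycle has even length, and apply this to two specific cycles. The row $v_{i,0}v_{i,1}\cdots v_{i,m-1}v_{i,0}$, closed by the horizontal wrap edge, is a cycle of length $m$, forcing $m$ to be even. The more delicate cycle descends one column and returns along the bottom row: for $1\le r\le m-1$ take $C:\; v_{0,0}v_{1,0}\cdots v_{n-1,0}\,v_{0,r}v_{0,r-1}\cdots v_{0,1}v_{0,0}$, where the single edge $v_{n-1,0}\sim v_{0,r}$ is the vertical wrap. One checks that the listed vertices are pairwise distinct (the first segment lies in column $0$, the return segment in columns $1,\dots,r$), so $C$ is a genuine cycle of length $(n-1)+1+r=n+r$; bipartiteness then forces $n+r$ to be even. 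When $r=m$, i.e. the usual torus, the vertical wrap closes the column directly into a cycle of length $n\equiv n+r \pmod 2$, which serves the same purpose.

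The only real subtlety lies in this last cycle. Because of the torsion $r$, the naive vertical loop does not close up after a single pass, so the vertical generator of the torus cannot be realised by a single column. The key idea is to absorb the torsion by returning horizontally through $r$ squares, trading a clean vertical loop for the cycle $C$ of length $n+r$; verifying that the chosen vertices are distinct (so that $C$ is simple) and handling the index arithmetic modulo $m$ are the points that need care. Conceptually this is the statement that, since every face of $T(n,m,r)$ is a $4$-cycle, the length-parity of a cycle depends only on its homology class, and the two classes are detected exactly by the numbers $m$ and $n+r$.
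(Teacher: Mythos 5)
Your proof is correct and follows essentially the same route as the paper: the explicit parity colouring of $v_{i,j}$ by $i+j$ is exactly the colouring underlying the paper's cylinder-gluing argument for sufficiency, and for necessity both arguments exhibit a row cycle of length $m$ and a column-plus-return cycle of length $n+r$ (the paper routes the return along row $n-1$ rather than row $0$, which is immaterial). No gaps beyond the degenerate small cases (e.g.\ $n=1$, $r=1$) that the paper's own proof also glosses over.
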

\begin{proof}If both $m$ and $n+r$ are even, then the quadriculated cylinder obtained by identifying the left and right sides of the $n\times m$ chessboard is bipartite,
whose vertices can be colored with white and black such that two end vertices of all edges have different colors. Since $n$ and $r$ have the same parity, white (resp. black) vertices are glued with white (resp. black) vertices when identifying the top and bottom sides with a torsion of $r$ squares. Thus, two end vertices of all edges in $T(n,m,r)$ have different colors and $T(n,m,r)$ is bipartite.

Conversely, we will prove it by contradiction. If $m$ is odd, then the cycle induced by all vertices lying in a row contains $m$ vertices.
If $n+r$ is odd, then $v_{0,0}v_{1,0}\cdots v_{n-1,0}v_{n-1,m-1}\cdots \\v_{n-1,m-r}v_{0,0}$ is a cycle of length $n+r$. Hence $T(n,m,r)$ contains
an odd cycle (a cycle containing odd number of vertices), which contradicts that $T(n,m,r)$ is bipartite.
\end{proof}

Different from the $n\times m$ torus, all edges lying in a column of $T(n,m,r)$ do not necessarily form a cycle, whereas some columns may form a cycle. We next study such cycles in details.
For $j\in Z_m$, the path $v_{0,j}v_{1,j}\cdots v_{n-1,j}$ is called \emph{$j$-column} (or simply a \emph{column}),
and $v_{0,j}$ and $v_{n-1,j}$ are the \emph{initial} and \emph{terminal} of a $j$-column, respectively.
For $j_1$- and $j_2$-columns, if initial $v_{0,j_2}$ of $j_2$-column is
 adjacent to terminal $v_{n-1,j_1}$ of $j_1$-column, that is, $j_2\equiv j_1+r$ (mod $m$), then $j_2$-column is the \emph{successor} of $j_1$-column.
Let $j_0$-, $j_1$-, \dots, $j_{g-1}$-columns be pairwise different such that $j_{t+1}$-column is the successor of $j_t$-column for each $t\in Z_g$. Then these $g$ columns form a cycle, called an \emph{$I$-cycle}.

For $j\in Z_{m}$, let $j$ represent $j$-column of $T(n,m,r)$. Then there is a one-to-one correspondence between $m$ columns of $T(n,m,r)$ and $Z_m$. Let $p:Z_{m}\rightarrow Z_{m}$ be a permutation. If $p(j)=j+1$ for $j\in Z_{m}$, then $p$ is an \emph{$m$-cycle}, denoted by $p=(01\cdots m-1)$. Thus $I$-cycles of $T(n,m,r)$ correspond to cycle decomposition of $(01\cdots m-1)^r$. First we give some properties of $(01\cdots m-1)^r$. Note that $(r,m)$ represents the greatest common factor of $r$ and $m$.

\begin{lem}\label{cyclic} Let $p=(01\cdots m-1)$ be an $m$-cycle. For each integer $1\leq r \leq m$, the following results hold.

(\romannumeral1) \cite{cyclic} $p^r$ is the product of $(r,m)$ cycles each of length $\frac{m}{(r,m)}$.

(\romannumeral2) any consecutive $(r,m)$ numbers of $Z_m$ lie on different cycles of $p^r$.

(\romannumeral3) $j$ and $j+h$ belong to the same cycle of $p^r$ if and only if $(r,m)~|~h$.
\end{lem}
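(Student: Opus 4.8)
The plan is to recognize the cycle $p=(01\cdots m-1)$ as the translation $j\mapsto j+1$ on $Z_m$ (read as integers modulo $m$), so that $p^r$ is simply the map $j\mapsto j+r \pmod m$. Writing $d=(r,m)$, the entire lemma then reduces to describing the orbits of this translation, and all three parts will fall out of a single structural observation about these orbits.

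First I would compute the cycle of $p^r$ through an arbitrary element $j$. Since $p^r(j)=j+r$, repeated iteration gives the orbit $\{\,j+kr \bmod m : k\in\mathbb{Z}\,\}$. The set of increments $\{\,kr \bmod m : k\in\mathbb{Z}\,\}$ is exactly the cyclic subgroup of $Z_m$ generated by $r$, and because $(r,m)=d$ this subgroup equals $\langle d\rangle=\{0,d,2d,\dots,m-d\}$, a set of $\tfrac{m}{d}$ elements. Hence the cycle of $p^r$ containing $j$ is precisely the coset $j+\langle d\rangle$, that is, the set of all elements of $Z_m$ congruent to $j$ modulo $d$. This is the one fact from which everything else follows: there are exactly $d$ residue classes modulo $d$, each of size $\tfrac{m}{d}$, which recovers the cited part (\romannumeral1).

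From this orbit description, (\romannumeral3) is immediate: $j$ and $j+h$ lie in the same cycle if and only if they lie in the same residue class modulo $d$, i.e. $j\equiv j+h \pmod d$, which holds if and only if $d\mid h$. For (\romannumeral2), I would take any $d$ consecutive elements $j,j+1,\dots,j+d-1$ of $Z_m$; their residues modulo $d$ are $j,j+1,\dots,j+d-1 \pmod d$, a complete residue system modulo $d$ and hence pairwise distinct. By the orbit description they therefore belong to $d$ distinct cycles of $p^r$, as claimed.

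The only step requiring any care is the identification $\{\,kr \bmod m : k\in\mathbb{Z}\,\}=\langle d\rangle$, which is the standard fact that the subgroup generated by $r$ in a cyclic group of order $m$ has order $\tfrac{m}{(r,m)}$; I would justify it in one line by B\'ezout, noting that $d$ is an integer combination of $r$ and $m$ so $d\in\langle r\rangle$, while conversely every multiple of $r$ is a multiple of $d$. There is no genuine obstacle here, since the real content is entirely the reduction to the coset structure of $Z_m$; the argument is elementary once $p^r$ is read as translation by $r$.
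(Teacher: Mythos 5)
Your proposal is correct, and it takes a cleaner and more unified route than the paper. You first establish the single structural fact that the cycle of $p^r$ through $j$ is the coset $j+\langle d\rangle$ with $d=(r,m)$, i.e.\ the residue class of $j$ modulo $d$ (justified via B\'ezout in one direction and $d\mid r$ in the other), and then all three parts drop out at once --- including part (\romannumeral1), which the paper does not prove but merely cites from \cite{cyclic}. The paper instead argues piecemeal: it proves (\romannumeral2) by contradiction, assuming $j$ and $j+h$ with $1\leq h\leq d-1$ satisfy $j+h\equiv j+rt \pmod m$ and deriving incompatible bounds on $h\cdot\frac{m}{d}$ from the identity $h=rt-um$; it then proves the forward direction of (\romannumeral3) by the observation that $h=rt+um$ is divisible by $d$, and the converse by a pigeonhole count using (\romannumeral2) and the fact that $p^r$ has exactly $d$ cycles (so it leans on the cited part (\romannumeral1)). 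Your approach buys a self-contained proof of all three statements from one orbit computation and avoids the inequality manipulation entirely; the paper's approach is more elementary in that it never invokes B\'ezout or subgroup language, but at the cost of an external citation for (\romannumeral1) and a more fragile contradiction argument for (\romannumeral2). Both are valid; yours is the one I would keep.
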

\begin{proof}Example 2.3 in \cite{cyclic} implies (\romannumeral1).

Next we prove (\romannumeral2). Suppose to the contrary that $j$ and $j+h$ lie on the same cycle where $j\in Z_m$ and $1\leq h\leq (r,m)-1$. That is to say, $j+h$ lies on the cycle which contains $j$. Then $j+h\equiv j+rt$ (mod $m$) for some $0\leq t \leq \frac{m}{(r,m)}-1$. Since $h< rt$, there exists a positive integer $u$ such that $h=rt-um$. So
\begin{equation}
1\leq \frac{m}{(r,m)}\leq h\cdot\frac{m}{(r,m)}\leq  ((r,m)-1)\cdot\frac{m}{(r,m)}= m-\frac{m}{(r,m)}. \label{rmr}
\end{equation}
On the other hand, $h\cdot\frac{m}{(r,m)}=(\frac{r}{(r,m)}\cdot t-\frac{m}{(r,m)}\cdot u)m \geq  m$, which is a contradiction to (\ref{rmr}).

Finally, we prove (\romannumeral3). If $j$ and $j+h$ belong to the same cycle of $p^r$, then $j+h\equiv j+rt$ (mod $m$) for some $0\leq t \leq \frac{m}{(r,m)}-1$. Thus, there exists an integer $u$ such that $h=rt+um$. Since $(r,m)~|~rt$ and $(r,m)~|~um$, we have $(r,m)~|~h$. Conversely, it suffices to prove that $j$ and $j+(r,m)$ lie on the same cycle of $p^r$ for $j\in Z_m$. By (\romannumeral2), consecutive $(r,m)$ numbers $j, j+1, \dots, j+(r,m)-1$ lie on different cycles. Since $p^r$ has $(r,m)$ cycles, $j+(r,m)$ lies on some cycle  containing $t$ where $t\in\{j,j+1, \dots, j+(r,m)-1\}$. By (\romannumeral2), $j+1,j+2, \dots, j+(r,m)$ lie on different cycles. Thus, $j+(r,m)$ and $j$ lie on the same cycle.
\end{proof}

By Lemma \ref{cyclic}, we obtain the following results immediately.

\begin{cor}\label{lem1}For $n\geq 1$, $m\geq 2$ and $1\leq r\leq m$, the following results hold.

(\romannumeral1) $T(n,m,r)$ has $(r,m)$ $I$-cycles and each $I$-cycle contains $\frac{m}{(r,m)}$ columns.

(\romannumeral2) any consecutive $(r,m)$ columns lie on different $I$-cycles.

(\romannumeral3) $j$ and $j+h$ belong to the same $I$-cycle if and only if $(r,m)~|~h$.
\end{cor}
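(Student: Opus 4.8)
The plan is to reduce all three statements to Lemma \ref{cyclic} through the correspondence, set up just before that lemma, between the $m$ columns of $T(n,m,r)$ and the action of the permutation $p^r$ on $Z_m$, where $p=(01\cdots m-1)$. First I would make this dictionary precise. Recall that the $j_2$-column is the successor of the $j_1$-column exactly when $j_2\equiv j_1+r\pmod{m}$; hence the successor relation on columns is nothing but the map $j\mapsto p^r(j)$ on the labels $j\in Z_m$. Consequently an $I$-cycle of $T(n,m,r)$—a maximal list of pairwise distinct columns, each the successor of the previous one, that closes up—is precisely an orbit of $p^r$ on $Z_m$, i.e. a single cycle in the cycle decomposition of $p^r$. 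Once this identification is in place, each of the three claims transfers verbatim.

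For part (\romannumeral1) I would simply read off Lemma \ref{cyclic}(\romannumeral1): since $p^r$ decomposes into $(r,m)$ cycles each of length $\frac{m}{(r,m)}$, the torus has exactly $(r,m)$ $I$-cycles, and each of them consists of $\frac{m}{(r,m)}$ columns. For part (\romannumeral2), Lemma \ref{cyclic}(\romannumeral2) says that any consecutive $(r,m)$ elements of $Z_m$ lie on distinct cycles of $p^r$; translating labels back to columns gives that any consecutive $(r,m)$ columns lie on distinct $I$-cycles. For part (\romannumeral3), Lemma \ref{cyclic}(\romannumeral3) states that $j$ and $j+h$ lie on the same cycle of $p^r$ if and only if $(r,m)\mid h$, which is exactly the asserted criterion for the $j$- and $(j+h)$-columns to belong to the same $I$-cycle.

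The only genuine content is the identification of the successor relation with $p^r$ and the recognition that $I$-cycles are exactly the cycles of $p^r$; after that everything is a direct restatement, so there is no substantive obstacle—this is why the corollary can be asserted as following \emph{immediately} from Lemma \ref{cyclic}. If I wanted to be fully careful, I would only double-check the boundary conventions: that labels are read modulo $m$, and that the one-to-one correspondence between columns and $Z_m$ together with $1\le r\le m$ ensures no column is double-counted. These are routine bookkeeping rather than a real difficulty.
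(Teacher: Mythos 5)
Your proposal is correct and matches the paper exactly: the paper sets up the same correspondence between the successor relation on columns and the permutation $p^r$ just before Lemma \ref{cyclic} (``$I$-cycles of $T(n,m,r)$ correspond to cycle decomposition of $(01\cdots m-1)^r$'') and then asserts the corollary as an immediate consequence, which is precisely your reduction. No further comment is needed.
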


The edges of $T(n,m,r)$ are intuitively classified into \emph{horizontal edges} and \emph{vertical edges}, as shown in Figure \ref{nota}. Obviously, all vertical edges form $(r,m)$ $I$-cycles. For $j\in Z_m$, let $E_j=\{e_{i,j}=v_{i,j}v_{i,j+1}|i\in Z_n\}$ be the set of all horizontal edges whose end vertices lie on $j$-column and $(j+1)$-column, respectively. For $j\in Z_{(r,m)}$, let $C_j$ be the $I$-cycle of $T(n,m,r)$ containing the $j$-column. Then $C_j$ consists of $j$-, $(j+r)$-, \dots, $(j+r(\frac{m}{(r,m)}-1))$-columns, and $E_j\cup E_{j+r}\cup \dots \cup E_{j+(\frac{m}{(r,m)}-1)r}$ is the set of all horizontal edges, which each has one end in $C_j$ and the other in $C_{j+1}$, where subscripts take modulo $m$.
The union of $E(C_j)\cup E(C_{j+1})$ and $E_j\cup E_{j+r}\cup \cdots \cup E_{j+(\frac{m}{(r,m)}-1)r}$ forms a subgraph, denoted by $R_{j,j+1}$, which is the Cartesian product of a cycle of length $\frac{nm}{(r,m)}$ with $K_2$.

For $j\in Z_{(r,m)}$, from the vertex $v_{0,j}$, we in turn relabel the vertices of $C_j$ as $1^j$, $2^j$, \dots, $(\frac{mn}{(r,m)})^j$ where $1^j=v_{0,j}$, $2^j=v_{1,j}$, and so on. Then $n$ vertices of $(j+rt)$-column are labeled as $(tn+1)^j$, $(tn+2)^j$, \dots, $((t+1)n)^j$ from top to bottom (see Figure \ref{labels}), where $0\leq t\leq \frac{m}{(r,m)}-1$.
\begin{figure}[h]
\centering
\includegraphics[height=3.2cm,width=11cm]{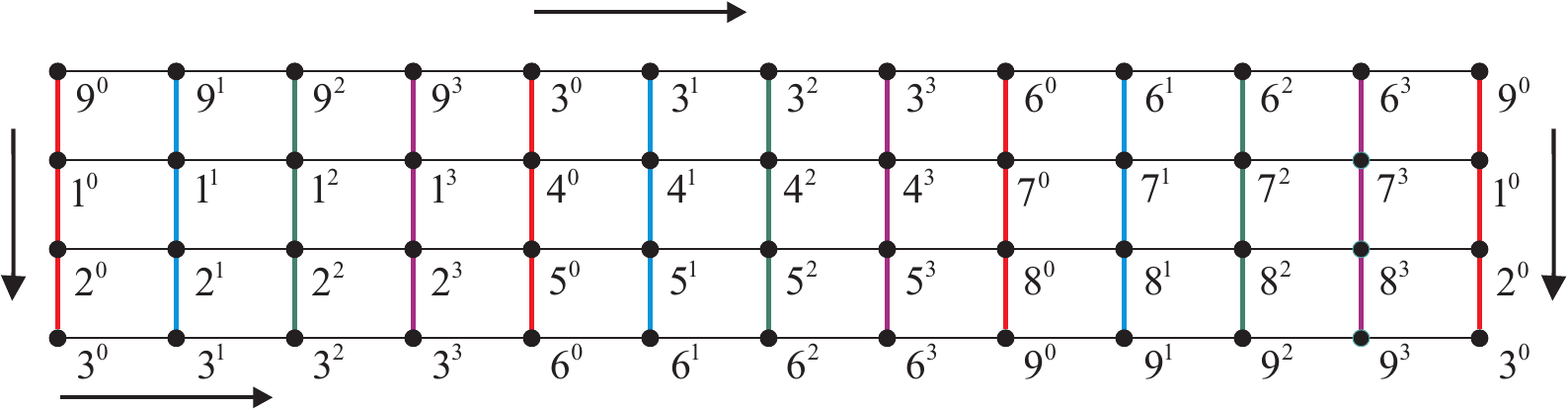}
\caption{\label{labels}New labels of $T(3,12,4)$ according to $I$-cycles}
\end{figure}

Further, we can obtain the relationship between labels of end vertices of horizontal edges.

\begin{lem}\label{prop} Let $x^jy^{j+1}$ be a horizontal edge of $T(n,m,r)$ where $1\leq x,y\leq \frac{mn}{(r,m)}$ and $j\in Z_{(r,m)}$. Then

(\romannumeral1) For $0\leq j\leq (r,m)-2$, we have $x=y$.

(\romannumeral2) For $j=(r,m)-1$, we have
\begin{equation}
 y=
 \begin{cases}
 x+nk, & \quad {if\  1\leq x\leq (\frac{m}{(r,m)}-k)n};\\
 x-(\frac{m}{(r,m)}-k)n,&\quad {otherwise}, \label{endvertex}
 \end{cases}
 \end{equation}where $0\leq k\leq \frac{m}{(r,m)}-1$ is an integer such that $(r,m)\equiv rk$ $($mod $m)$.
\end{lem}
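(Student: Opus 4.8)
The plan is to reduce everything to the explicit relabeling of the $I$-cycles and then simply read off the labels of the two endpoints of each horizontal edge. Write $\ell=\frac{m}{(r,m)}$. The one fact I record at the outset is the label of an arbitrary vertex of $C_j$: since the $(j+rt)$-column carries the labels $(tn+1)^j,\dots,((t+1)n)^j$ from top to bottom, the vertex $v_{i,\,j+rt}$ with $i\in Z_n$ and $0\le t\le\ell-1$ receives the label $(tn+i+1)^j$. Everything below is a direct application of this formula.

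For (i), fix $0\le j\le(r,m)-2$. Every horizontal edge joining $C_j$ to $C_{j+1}$ lies in $E_j\cup E_{j+r}\cup\cdots\cup E_{j+(\ell-1)r}$, hence has the form $e_{i,\,j+rt}=v_{i,\,j+rt}\,v_{i,\,j+rt+1}$ for some $i\in Z_n$ and $0\le t\le\ell-1$. The left endpoint lies in $C_j$, so by the labeling formula $x=tn+i+1$. Since $j+1\le(r,m)-1$, the column $j+rt+1=(j+1)+rt$ is precisely the $t$-th column of $C_{j+1}$; thus the right endpoint is $v_{i,\,(j+1)+rt}=(tn+i+1)^{j+1}$, giving $y=tn+i+1=x$. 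No modular reduction intervenes, so (i) is immediate.

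The content lies in (ii), where $C_{j+1}=C_0$ and the torsion produces a genuine shift. Now the horizontal edges take the form $e_{i,\,(r,m)-1+rt}=v_{i,\,(r,m)-1+rt}\,v_{i,\,(r,m)+rt}$, and again $x=tn+i+1$. The task is to decide which column of $C_0$ contains the right endpoint $v_{i,\,(r,m)+rt}$, i.e. to solve $rs\equiv(r,m)+rt\pmod m$ for $0\le s\le\ell-1$. First I would verify that the auxiliary integer $k$ with $(r,m)\equiv rk\pmod m$ is well-defined: writing $r=(r,m)r'$ and $m=(r,m)m'$ with $(r',m')=1$, the congruence becomes $r'k\equiv1\pmod{m'}$, which has a unique solution $k\in\{0,\dots,\ell-1\}$ because $r'$ is a unit modulo $m'$. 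Then $(r,m)+rt\equiv r(k+t)\pmod m$, so $s\equiv k+t\pmod\ell$.

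Finally a case split on $k+t$ completes the proof. If $k+t\le\ell-1$ then $s=k+t$, and the labeling formula gives $y=(k+t)n+i+1=x+nk$; as $t$ runs over $0,\dots,\ell-1-k$ this is exactly the range $1\le x\le(\ell-k)n$. Otherwise $s=k+t-\ell$, whence $y=x-(\ell-k)n$, matching the remaining values of $x$. This reproduces (\ref{endvertex}). I expect the only delicate point to be this last step: aligning the wrap-around threshold $k+t\ge\ell$ with the stated boundary $x=(\ell-k)n$, since an off-by-one there would misstate the two subintervals.
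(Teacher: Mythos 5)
Your proposal is correct and follows essentially the same route as the paper: identify the columns containing the two endpoints via the decomposition of the horizontal edges of $R_{j,j+1}$ into the sets $E_{j+rt}$, and read off the labels from the column-by-column relabeling of each $I$-cycle, with the wrap-around at $j=(r,m)-1$ handled by solving $rs\equiv (r,m)+rt \pmod m$. The only addition beyond the paper's argument is your explicit verification that $k$ is well-defined and unique (via $r'k\equiv 1\pmod{m'}$), which the paper asserts without proof; otherwise the case analysis and the alignment of the threshold $k+t\ge \frac{m}{(r,m)}$ with $x=(\frac{m}{(r,m)}-k)n$ match the paper's computation.
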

\begin{proof}Since $x^j\in V(C_j)$ and $y^{j+1}\in V(C_{j+1})$, $x^jy^{j+1}$ is a horizontal edge of $R_{j,j+1}$. Since all horizontal edges of $R_{j,j+1}$ consist of $E_j\cup E_{j+r}\cup \cdots \cup E_{j+r(\frac{m}{(r,m)}-1)}$, there exists $t\in Z_{\frac{m}{(r,m)}}$ such that $x^jy^{j+1}\in E_{j+rt}$. Noting that an edge of $E_{j+rt}$ has one end vertex on $(j+rt)$-column and another on $(j+1+rt)$-column, we will obtain the relationship between $x$ and $y$ from the labels of its two end vertices.

For $0\leq j\leq (r,m)-2$, since all vertices of $(j+rt)$-column are labeled as $(nt+1)^j$, $(nt+2)^j$, \dots, $(n(t+1))^j$ and these vertices of $(j+1+rt)$-column are labeled as $(nt+1)^{j+1}$, $(nt+2)^{j+1}$, \dots, $(n(t+1))^{j+1}$ from top to bottom, we obtain that $y=x$.

But for $j=(r,m)-1$, we have $j+1=(r,m)$. By Corollary \ref{lem1}(\romannumeral3), $(r,m)$-column lies on $C_{0}$. Since $C_0$ consists of 0-, $r$-, $2r$-, $\cdots$, $(\frac{m}{(r,m)}-1)r$-columns, we have $(r,m)\equiv rk$ (mod $m$) for some integer $k$ with $0\leq k\leq \frac{m}{(r,m)}-1$.
Thus, $x^j$ lies on $(j+rt)$-column and $y^{j+1}$ lies on $r(k+t)$-column. Since all vertices of $(j+rt)$-column are labeled as $(nt+1)^j$, $(nt+2)^j$, \dots, $(n(t+1))^j$ and these vertices of $((k+t)r)$-column are labeled as $((k+t)n+1)^{0}$, $((k+t)n+2)^{0}$, \dots, $(n(k+t+1))^{0}$ from top to bottom, we obtain that $y=x+kn$ (mod $\frac{mn}{(r,m)}$).
That is to say, for $1\leq x\leq (\frac{m}{(r,m)}-k)n$, we have $y=x+kn$. Otherwise, we have $\frac{mn}{(r,m)}+1\leq x+kn \leq \frac{mn}{(r,m)}+kn\leq \frac{2mn}{(r,m)}-n$. Thus,
we obtain that $y=x+kn-\frac{mn}{(r,m)}=x-(\frac{m}{(r,m)}-k)n$.
\end{proof}

From the above discussion we can give another representation for $T(n,m,r)$.
Given an $\frac{mn}{(r,m)}\times (r,m)$ chessboard, by Lemma \ref{prop}(\romannumeral1), we label the vertices on the $(i+1)$-column from top to bottom as $\{(\frac{mn}{(r,m)})^i$, $1^i$, $2^i$, $\dots$, $(\frac{mn}{(r,m)})^i\}$ for $i\in Z_{(r,m)}$, and label the vertices of the last column as $(nk)^0$, $(nk+1)^0$, $(nk+2)^0$, $\dots$, $(\frac{mn}{(r,m)})^0$, $1^0$, $2^0$, \dots, $n^0$, \dots, $(n(k-1)+1)^0$, $(n(k-1)+2)^0$, \dots, $(nk)^0$ from top to bottom.
In order to form the same pattern as $T(n,m,r)$, we stick the top and bottom sides together, and the left and right sides by a torsion of $(\frac{m}{(r,m)}-k)n$ squares.
By definition, this is the quadriculated torus $T((r,m), \frac{mn}{(r,m)},(\frac{m}{(r,m)}-k)n)$, denoted by $T^*(n,m,r)$. That is, $T^*(n,m,r)=T((r,m), \frac{mn}{(r,m)},(\frac{m}{(r,m)}-k)n)$. In the above process, all horizontal and vertical edges of $T(n,m,r)$ are still horizontal and vertical in $T^*(n,m,r)$, and all faces of $T(n,m,r)$ are mapped into faces of $T^*(n,m,r)$. Thus, this is an equivalence between such two quadriculated tori.
For example, we give $T^*(3,12,4)=T(4,9,6)$ (see Figure \ref{obse}) where $k=1$, and $T^*(3,12,10)=T(2,18,3)$ where $k=5$. In fact, $k$ could be any integer from 0 to $\frac{m}{(r,m)}-1$.
\begin{figure}[h]
\centering
\includegraphics[height=7cm,width=4.6cm]{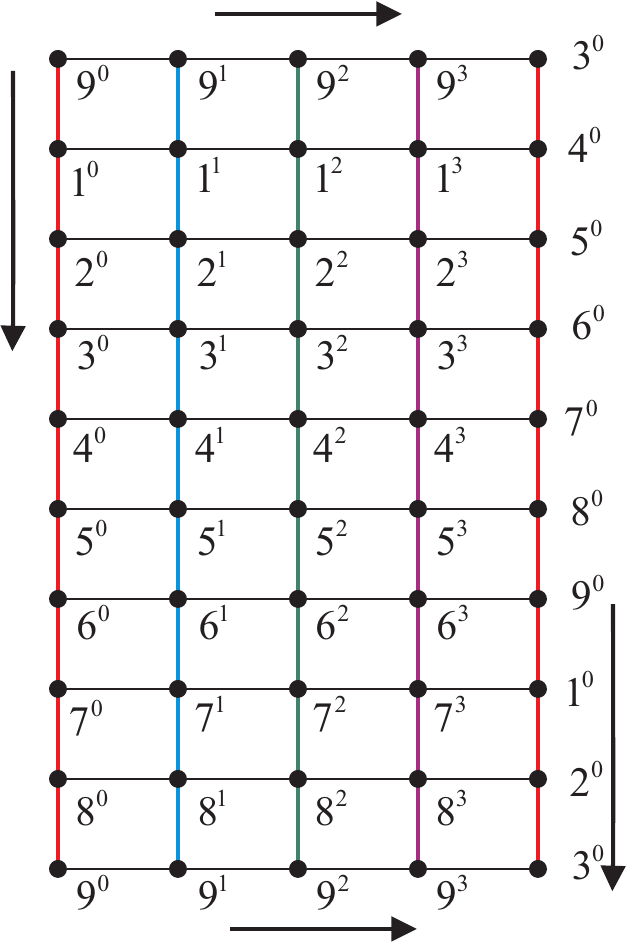}
\caption{\label{obse}Quadriculated torus $T(4,9,6)$, a dual representation of $T(3,12,4)$.}
\end{figure}

The next proposition shows that $T^*(n,m,r)$ is a dual.
\begin{prop}\label{drawing} For $n,m\geq 1$ and $1\leq r\leq m$, $T^{**}(n,m,r)=T(n,m,r)$.
\end{prop}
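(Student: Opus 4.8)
The plan is to apply the definition of $T^{*}$ twice and verify that the parameter triple returns to $(n,m,r)$; since a quadriculated torus $T(n,m,r)$ with $1\le r\le m$ is determined up to equivalence by its triple, this suffices. Write $d=(r,m)$ and factor $r=da$, $m=db$ with $(a,b)=1$. In these terms the defining congruence $(r,m)\equiv rk\pmod m$ for the twisting parameter $k$ becomes $dak\equiv d\pmod{db}$, i.e. $ak\equiv 1\pmod b$; thus $k$ is the inverse of $a$ modulo $b$ taken in the range $0\le k\le b-1=\frac{m}{(r,m)}-1$, and in particular $(k,b)=1$ (from $ak-bt=1$ for some integer $t$, any common divisor of $k$ and $b$ divides $1$).

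First I would record, straight from the definition, that $T^{*}(n,m,r)=T(d,\,bn,\,(b-k)n)$. To iterate, set $N=d$, $M=bn$, $R=(b-k)n$ and compute the new greatest common divisor $(R,M)=((b-k)n,\,bn)=n\,(b-k,b)=n\,(k,b)=n$, where I use $(b-k,b)=(k,b)$ and the coprimality $(k,b)=1$ established above. This single computation is the crux of the argument: it immediately yields the new height $(R,M)=n$ and the new width $\frac{MN}{(R,M)}=\frac{bn\cdot d}{n}=bd=m$, so the first two parameters of $T^{**}(n,m,r)$ are already $n$ and $m$.

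It then remains to identify the new twist. The new parameter $K$ solves $(R,M)\equiv RK\pmod M$, that is $n\equiv (b-k)nK\pmod{bn}$; cancelling the common factor $n$ (which divides both sides and the modulus) reduces this to $(b-k)K\equiv 1\pmod b$, hence $-kK\equiv 1$ and $K\equiv -a\pmod b$ (using $k\equiv a^{-1}$). Taking $K=b-a$ in the admissible range $0\le K\le b-1$, the new twist is $\bigl(\tfrac{M}{(R,M)}-K\bigr)N=(b-(b-a))d=ad=r$. Therefore $T^{**}(n,m,r)=T(n,m,r)$.

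I expect the main obstacle to be purely the modular bookkeeping rather than any conceptual difficulty: one must be careful that $k$ (and likewise $K$) is the \emph{unique} residue in its prescribed range $[0,\frac{m}{(r,m)}-1]$ solving the relevant congruence, that the gcd reduction $(b-k,b)=(k,b)=1$ is justified, and that the degenerate case $b=1$ (equivalently $r=m$, the ordinary torus $T(n,m,0)$) is handled separately, where the chain reads $T(n,m,m)\mapsto T(m,n,n)\mapsto T(n,m,m)$ and the range constraint forces the unique value $K=0$. Conceptually, the identity reflects the fact that $T\mapsto T^{*}$ merely transposes the two fundamental directions of the torus---exchanging the $I$-cycle direction with the row direction---so that performing it twice restores the original pair; the computation above is the concrete incarnation of this symmetry.
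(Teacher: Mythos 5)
Your proof is correct and follows essentially the same route as the paper: apply the definition of $T^{*}$ twice and verify that the three parameters return to $(n,m,r)$, with the key steps being $\bigl(\tfrac{m}{(r,m)}-k,\tfrac{m}{(r,m)}\bigr)=1$ and the identification of the second twist parameter. Your substitution $r=da$, $m=db$ merely streamlines the paper's computation (solving for $K$ directly rather than verifying the guessed value $k'=\tfrac{m-r}{(r,m)}$), but the argument is the same.
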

\begin{proof}Let $n'=(r,m)$, $m'=\frac{mn}{(r,m)}$, $r'=n(\frac{m}{(r,m)}-k)$. Then $T^*(n,m,r)=T(n', m',r')$, where $0\leq k\leq \frac{m}{(r,m)}-1$ is an integer such that $(r,m)\equiv rk$ $($mod $m)$. Note that $$T^{**}(n,m,r)=T^*(n', m',r')=T((r',m'), \frac{m'n'}{(r',m')},n'(\frac{m'}{(r',m')}-k')),$$ where $0\leq k'\leq \frac{m'}{(r',m')}-1$ is an integer such that $(r',m')\equiv r'k'$ $($mod $m')$. It suffices to prove that $(r',m')=n$, $\frac{m'n'}{(r',m')}=m$ and $n'(\frac{m'}{(r',m')}-k')=r$.

Since $$(r',m')=(n(\frac{m}{(r,m)}-k),\frac{mn}{(r,m)})=n(\frac{m}{(r,m)}-k,\frac{m}{(r,m)})=n(\frac{m}{(r,m)},-k),$$ we only need to prove that $(\frac{m}{(r,m)},-k)=1$. Since $rk\equiv (r,m)$ $($mod $m)$, we have $\frac{r}{(r,m)}k\equiv 1$ $($mod $\frac{m}{(r,m)})$. That is, there exists an integer $p$ such that $\frac{r}{(r,m)}k-1=\frac{m}{(r,m)}p$. Hence, we obtain that $\frac{-r}{(r,m)}(-k)-p\frac{m}{(r,m)} =1$. Thus $-k$ and $\frac{m}{(r,m)}$ are mutually prime, and $(\frac{m}{(r,m)},-k)=1$. Therefore, $(r',m')=n$ and $\frac{m'n'}{(r',m')}=\frac{\frac{mn}{(r,m)}(r,m)}{n}=m$.

Noting that $$n'(\frac{m'}{(r',m')}-k')=(r,m)(\frac{\frac{mn}{(r,m)}}{n}-k')=(r,m)(\frac{m}{(r,m)}-k')=m-(r,m)k',$$ we need to prove that $k'=\frac{m-r}{(r,m)}$. By definition, we have $r'k'\equiv (r',m')$ (mod $m'$). That is, $k'$ is a solution of $n(\frac{m}{(r,m)}-k)x \equiv n$ (mod $\frac{mn}{(r,m)}$). Dividing by $n$, we obtain that $(\frac{m}{(r,m)}-k)x \equiv 1 $ (mod $\frac{m}{(r,m)}$). By the above proof, we have $(\frac{m}{(r,m)}-k,\frac{m}{(r,m)})=1$. Thus $(\frac{m}{(r,m)}-k)x\equiv 1$ (mod $\frac{m}{(r,m)}$) has a unique solution (see Theorem 57 in \cite{hardy}). Finally, we will check that $k'=\frac{m-r}{(r,m)}$ is a solution of above equation. Recall that $\frac{rk}{(r,m)}\equiv 1 ~(\text{mod } \frac{m}{(r,m)}),$ we have
$$(\frac{m}{(r,m)}-k)\frac{m-r}{(r,m)} \equiv (-k)\frac{m-r}{(r,m)}  \equiv\frac{rk}{(r,m)}\equiv 1 ~(\text{mod } \frac{m}{(r,m)}).$$ Thus $k'=\frac{m-r}{(r,m)}$ is the unique solution of $(\frac{m}{(r,m)}-k)x \equiv 1$ (mod $\frac{m}{(r,m)}$), and we finish the proof.
\end{proof}

\section{\normalsize Non-bipartite quadriculated tori}
In this section, we give the concept of resonance graphs and prove Theorem \ref{comnum} by using the theory of resonance graphs.

\subsection{\small Resonance graphs}

For a perfect matching $M$ of a graph $G$, a cycle $C$ of $G$ is $M$-\emph{alternating} if its edges appear alternately in $M$ and $E(C)\setminus M$. For an $M$-alternating cycle $C$ of $G$, the \emph{symmetric difference} $M\oplus E(C):=(M-E(C))\cup (E(C)-M)$ is another perfect matching of $G$ and such an operation is called a \emph{rotation} of $M$ (along $C$).

Let $G$ be a graph 2-cell embedded on a closed surface, i.e.  each face is homeomorphic to an open disc in $\mathbb{R}^2$. For convenience, sometimes we do not distinguish a face with its boundary, and a cycle with its edge set.
The \emph{resonance graph} of $G$, denoted by $R_t(G)$, is defined on the set of all perfect matchings of $G$ where two perfect matchings $M_1$ and $M_2$ are adjacent if and only if $M_1\oplus M_2$ forms the boundary of a face, that is, $M_2$ is obtained from a rotation of $M_1$ along the boundary of a face, simply a flip of $M_1$.
The resonance graph was introduced independently many times under different names by Gr\"{u}mdler \cite{G}, Zhang et al. \cite{Z1}, Randi\'{c} \cite{ran} and Fournier \cite{fou}. The study of resonance graphs mainly concentrates on some special plane graphs such as hexagonal systems \cite{G,Z1,ran}, polyominoes \cite{Z4,Z6} and plane bipartite graphs \cite{fou,Z3}. Readers may refer to a survey \cite{Zh06}.

Note that the flip graph of a quadriculated region corresponds to the resonance graph of its inner dual. Since $T(n,m,r)$ is a 4-regular quadriculated graph on torus, so is its (inner) dual. By Theorem \ref{classfy}, $D(T(n,m,r))$ is also a quadriculated torus. Analyzing carefully, we find that inner dual of $T(n,m,r)$ is itself. Hence, we have the following observation.

\begin{obs}\label{obse2}The flip graph of $T(n,m,r)$ is the same as its resonance graph.
\end{obs}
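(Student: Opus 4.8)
The plan is to unpack both sides of the claimed equality and reduce it to a self-duality statement for the quadriculated torus. By the remark made just before the observation, a flip of a tiling of the region $T(n,m,r)$ lifts and rotates two parallel dominoes occupying a $2\times 2$ block of cells; in the inner dual $D(T(n,m,r))$ each such block is exactly the $4$-cycle of cells surrounding one vertex of $T(n,m,r)$, i.e.\ a face of $D(T(n,m,r))$. Hence the flip graph $\mathcal{T}(T(n,m,r))$ is literally the resonance graph $R_t(D(T(n,m,r)))$: same vertex set (tilings $=$ perfect matchings of the dual), same adjacency (flip $=$ rotation along a face). So it suffices to show that $D(T(n,m,r))$ is isomorphic to $T(n,m,r)$ \emph{as a graph $2$-cell embedded on the torus}, i.e.\ by a graph isomorphism carrying faces to faces; then the two resonance graphs agree verbatim, and the observation follows.

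To establish this self-duality I would argue by explicit coordinates rather than only citing Theorem \ref{classfy} (which already guarantees $D(T(n,m,r))$ is some quadriculated torus $T(n',m',r')$, but does not by itself pin down the torsion). Index the faces of $T(n,m,r)$ as $f_{i,j}$ for $i\in Z_n$, $j\in Z_m$, where $f_{i,j}$ is the cell with corners $v_{i,j}$, $v_{i,j+1}$, $v_{i+1,j}$, $v_{i+1,j+1}$; this gives the $nm$ vertices of $D(T(n,m,r))$. Two cells are adjacent in the dual iff they share an edge, so $f_{i,j}$ is joined to $f_{i,j\pm 1}$ (columns wrapping mod $m$, inherited from $v_{i,0}\sim v_{i,m-1}$) and to $f_{i\pm1,j}$; the only delicate adjacency is the vertical wrap, where the bottom edge of $f_{n-1,j}$ is $v_{n-1,j}v_{n-1,j+1}$ glued via $v_{n-1,j}\sim v_{0,j+r}$, so that the cell below $f_{n-1,j}$ is $f_{0,j+r}$. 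The map $\phi\colon f_{i,j}\mapsto v_{i,j}$ then visibly sends $f_{i,j\pm1}\mapsto v_{i,j\pm1}$, $f_{i\pm1,j}\mapsto v_{i\pm1,j}$, and $f_{n-1,j}\sim f_{0,j+r}$ to $v_{n-1,j}\sim v_{0,j+r}$, i.e.\ it matches every adjacency of $D(T(n,m,r))$ with the defining adjacencies of $T(n,m,r)$, including the torsion $r$.

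It remains to check that $\phi$ respects the embedding. The face of $D(T(n,m,r))$ dual to the vertex $v_{i,j}$ is bounded by the four cells incident to $v_{i,j}$, namely $f_{i,j}$, $f_{i,j-1}$, $f_{i-1,j}$, $f_{i-1,j-1}$; under $\phi$ these become $v_{i,j}$, $v_{i,j-1}$, $v_{i-1,j}$, $v_{i-1,j-1}$, which are exactly the four corners of the cell $f_{i-1,j-1}$, hence bound a face of $T(n,m,r)$. Thus $\phi$ carries faces to faces bijectively and is an isomorphism of embedded graphs, so $R_t(D(T(n,m,r)))=R_t(T(n,m,r))$ and the observation is proved. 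The one step requiring genuine care is the vertical identification: one must track how the torsion $r$ propagates to the bottom row of cells and confirm that the induced face structure on the dual coincides with that on $T(n,m,r)$, since it is precisely this face correspondence that makes flips on the region and rotations along faces of the graph the same move. Everything else is a routine bookkeeping of indices modulo $n$ and $m$.
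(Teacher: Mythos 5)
Your argument is correct and takes essentially the same route as the paper: the flip graph of the region equals the resonance graph of its inner dual, and $T(n,m,r)$ is self-dual as an embedded graph --- a fact the paper asserts after ``analyzing carefully'' and you verify explicitly via the map $f_{i,j}\mapsto v_{i,j}$, checking the torsion-$r$ wrap and the face correspondence. (One cosmetic slip: $v_{n-1,j}v_{n-1,j+1}$ is the top, not the bottom, edge of the cell $f_{n-1,j}$, though the adjacency $f_{n-1,j}\sim f_{0,j+r}$ you conclude from the glued edge $v_{0,j+r}v_{0,j+r+1}$ is correct.)
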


\subsection{\small Preliminaries}
Let $M$ be a perfect matching of $T(n,m,r)$ consisting of vertical edges. For $j,k\in Z_{m}$ and $j\neq k$, if both $v_{0,j}v_{1,j}$ and $v_{0,k}v_{1,k}$ belong to $M$ or neither $v_{0,j}v_{1,j}$ nor $v_{0,k}v_{1,k}$ belongs to $M$, then we say that $j$- and $k$-columns \emph{have the same form on $M$}. For $i\in Z_{(r,m)}$, let $C_i$ and $C_{i+1}$ be two consecutive $M$-alternating $I$-cycles. Then $M$ \emph{has the same form} on $C_i$ and $C_{i+1}$ if and only if there exists $k$-column on $C_i$ and $(k+1)$-column on $C_{i+1}$ for some $k\in Z_m$ such that $M$ have the same form on such two columns.
For a set of $I$-cycles $\mathcal{C}$, if any two consecutive ones of $\mathcal{C}$ have the same form on $M$, then we say that $\mathcal{C}$ \emph{has the same form on $M$}.

By a simple argument on $R_{j,j+1}$, we obtain the following lemma.
\begin{lem}\label{le61}For $j\in Z_{(2r,2m)}$, let $M$ be a perfect matching of $R_{j,j+1}\subseteq T(2n+1,2m,2r)$.

(\romannumeral1) If $M$ contains exactly one horizontal edge, then, by $\frac{m(2n+1)-(r,m)}{2(r,m)}$ flips from $M$, we can obtain a perfect matching of $R_{j,j+1}$ consisting of all horizontal edges.

(\romannumeral2) If $M$ consists of vertical edges and $C_{j}$ and $C_{j+1}$ have the same form on $M$, then, by $\frac{m(2n+1)}{2(r,m)}$ flips from $M$, we can obtain a perfect matching of $R_{j,j+1}$ consisting of all horizontal edges.

(\romannumeral3) Moreover, by $\frac{m(2n+1)}{2(r,m)}$ flips on the resulting perfect matching in (\romannumeral2), we can obtain a perfect matching $M'=(E(C_j)\cup E(C_{j+1}))\oplus M$.
\end{lem}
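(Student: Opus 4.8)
The plan is to analyze the structure of $R_{j,j+1}$, which by the earlier discussion is the Cartesian product $C_{\frac{nm}{(r,m)}} \times K_2$ (a prism over a cycle of length $N := \frac{(2n+1)m}{(r,m)}$, here the cycle being $C_j$ with its successor $C_{j+1}$ and the rungs being the horizontal edges joining them). So throughout I would picture $R_{j,j+1}$ as a ladder wrapped into a cycle: two concentric $N$-cycles $C_j$ and $C_{j+1}$ connected by $N$ rungs (the horizontal edges), and I would use the relabeling from Lemma~\ref{prop} so that the rung at position $x$ joins vertex $x^j$ on $C_j$ to the appropriate vertex on $C_{j+1}$. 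The key observation is that a prism over a cycle has very few perfect matchings of each local type, and flips along the square faces move us between them in a predictable cyclic fashion.

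For part (\romannumeral1), I would argue as follows. Suppose $M$ contains exactly one horizontal (rung) edge, say at position $x$. Then the two endpoints of that rung are matched across, and the remaining $2N-2$ vertices split into two disjoint paths (one along $C_j$, one along $C_{j+1}$), each of which must be perfectly matched by the vertical edges it contains; this forces a unique alternating pattern of vertical edges around each cycle. Now the rung at $x$ together with the two vertical edges adjacent to it on either side bounds a square face; flipping along that face replaces the single rung by a pair of rungs (or shifts the rung), increasing the number of horizontal edges by one. Iterating, each flip converts one more square into a horizontal pair, and I would count that exactly $\frac{N-1}{2} = \frac{m(2n+1)-(r,m)}{2(r,m)}$ flips are needed to turn the all-but-one-vertical matching into the all-horizontal matching (the $-1$ accounting for the single rung already present, and the division by $2$ because each flip handles a side-by-side pair of squares). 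The precise bookkeeping of which square to flip at each stage is the routine part; the content is that the all-horizontal matching is reachable and the flip count is forced by the cycle length.

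For part (\romannumeral2), $M$ consists entirely of vertical edges, and the hypothesis that $C_j$ and $C_{j+1}$ have the same form on $M$ means that the vertical matchings on the two cycles are ``in phase,'' so that the pairs of vertical edges sitting opposite each other across a rung form genuine square faces. I would flip each such square face to convert a side-by-side vertical pair into a horizontal pair; since there are $N$ vertices per cycle and the squares tile the prism, this takes $\frac{N}{2} = \frac{m(2n+1)}{2(r,m)}$ flips to reach the all-horizontal matching, and I would check that the ``same form'' condition is exactly what guarantees the opposite vertical edges are aligned into flippable squares (if they were out of phase, the verticals on the two cycles would be staggered and no square face would be simultaneously occupied by two parallel dominoes). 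Part (\romannumeral3) then follows by symmetry: starting from the all-horizontal matching and flipping every square face the other way, we reach the ``complementary'' vertical matching, which is precisely $(E(C_j)\cup E(C_{j+1}))\oplus M$ — the matching obtained by toggling every vertical edge of the two cycles — again in $\frac{N}{2}$ flips.

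The main obstacle, and the place I would spend the most care, is part (\romannumeral2)'s reliance on the ``same form'' hypothesis and the verification that it makes the vertical-to-horizontal flips simultaneously available as a clean sequence around the entire wrapped ladder. Because $N = \frac{(2n+1)m}{(r,m)}$ is odd times $m/(r,m)$, the parity of the cycle length and the torsion introduced when $C_j$'s successor wraps around (encoded in Lemma~\ref{prop}(\romannumeral2) for the boundary case $j=(r,m)-1$) could disrupt the naive pairing of squares; I would need to confirm that the labeling identifies the rungs consistently so that the $\frac{N}{2}$ square faces really do partition the prism and that no parity obstruction prevents completing all the flips. Once the alignment of squares is established, the flip counts in all three parts are immediate consequences of the cycle length $N$, so the geometry of the ``same form'' condition is where the real argument lies.
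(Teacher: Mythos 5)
The paper offers no proof of this lemma at all --- it is introduced with ``By a simple argument on $R_{j,j+1}$\dots'' --- so your write-up is supplying the missing argument rather than paralleling one, and in substance it is the right argument: view $R_{j,j+1}$ as the prism $C_N\times K_2$ with $N=\frac{m(2n+1)}{(r,m)}$, note that every square face of this prism (including the wrap-around one, by the successor relation between columns) is a face of the torus, and convert aligned pairs of cycle edges into pairs of rungs one face at a time. Your treatment of (\romannumeral2) and (\romannumeral3) is correct, and your identification of the ``same form'' hypothesis as exactly the in-phase condition that makes $N/2$ edge-disjoint faces simultaneously flippable is the real content.

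One local description in (\romannumeral1) is wrong, though your final count survives it. A face of $R_{j,j+1}$ is bounded by \emph{two} rungs and \emph{two} cycle edges, so ``the rung at $x$ together with the two vertical edges adjacent to it on either side'' does not bound a face, and no flip ever changes the number of horizontal edges by one --- each flip changes it by $0$ or $\pm 2$. The correct mechanism is that the unique perfect matching of the two paths $C_j\setminus\{x^j\}$ and $C_{j+1}\setminus\{x'^{j+1}\}$ leaves the vertical edges on the two cycles aligned, so the face at positions $(x+1,x+2)$ already has both of its cycle edges in $M$; flipping it creates two new rungs without touching the original one, and iterating outward consumes the $\frac{N-1}{2}$ aligned vertical pairs in $\frac{N-1}{2}$ flips. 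With that sentence repaired, the proof is complete; I would also make explicit (as you gesture at) that case (\romannumeral1) only arises when $N$ is odd and case (\romannumeral2) only when $N$ is even, which is why the two flip counts $\frac{N-1}{2}$ and $\frac{N}{2}$ are both integers.
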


For a vertex subset $X\subseteq V(G)$, the set of edges with exactly one end vertex in $X$ is written $\nabla(X)$. For a subgraph $G'$ of $G$, we write $\nabla(G')$ instead of $\nabla(V(G'))$.
Let $M$ be a perfect matching of $T(2n+1,2m,2r)$. For $i\in Z_{2(r,m)}$, a horizontal edge of $M\cap \nabla(C_i)$ lies in either $R_{i-1,i}$ or $R_{i,i+1}$. For $k\in\{1,2\}$, let $h_k$ be a horizontal edge of $M\cap \nabla(C_i)$ such that $x_k$ is the end vertex of $h_k$ on $C_i$.
We call $h_1$ and $h_2$ \emph{consecutive} if there exists a path $P$ on $C_i$ connecting $x_1$ and $x_2$ such that all intermediate vertices are incident to edges of $M\cap E(P)$.

By a similar proof as Lemma 4.2 in \cite{LW}, we can obtain the following result. Here, we give a simpler proof.
\begin{lem}\label{l3} Suppose that $M$ is a perfect matching of $T(2n+1,2m,2r)$ and $C$ is an $I$-cycle. If there exist two consecutive horizontal edges of $M\cap \nabla(C)$ on the same side, then we can obtain a perfect matching $M'$ of $T(2n+1,2m,2r)$ by some flips from $M$ such that the number of horizontal edges in $M'$ is two less than that in $M$.
\end{lem}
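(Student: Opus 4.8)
The plan is to reduce everything to the situation where $h_1$ and $h_2$ bound a common quadrilateral face, and then to finish with a single flip. Fix the $I$-cycle $C=C_i$ and, relabelling the vertices as in Lemma~\ref{prop}, assume without loss of generality that the common side is $R_{i,i+1}$, so that $h_1=x_1y_1$ and $h_2=x_2y_2$ are rungs of the prism $R_{i,i+1}$ with $x_1,x_2\in V(C_i)$ and $y_1,y_2\in V(C_{i+1})$. By the definition of consecutive edges, the $C_i$-arc $P$ from $x_1$ to $x_2$ has all its interior vertices matched in pairs by the vertical (``rail'') edges of $C_i$ on $P$; call these vertical dominoes $D_1,\dots,D_\ell$, so $\ell$ counts the rail dominoes lying between $h_1$ and $h_2$. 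I would then induct on $\ell$. If $\ell=0$, then $x_1$ and $x_2$ are adjacent on $C_i$, their rung-partners $y_1,y_2$ are adjacent on $C_{i+1}$ (also on the seam side $i=(r,m)-1$, since the twist of Lemma~\ref{prop}(\romannumeral2) is a global shift preserving adjacency), and $h_1,h_2$ are the two horizontal edges of a single face of $R_{i,i+1}$; as both lie in $M$, flipping this face replaces them by two vertical edges and lowers the horizontal count by exactly two, as required.

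For the inductive step the idea is to \emph{slide} $h_1$ one rail-domino closer to $h_2$. Look at the two vertices of $C_{i+1}$ directly across $D_1$. If the rail edge of $C_{i+1}$ joining them lies in $M$, then the face of $R_{i,i+1}$ spanned by $D_1$ and that rail edge is $M$-alternating; flipping it and then flipping the face containing $h_1$ converts $\{h_1,D_1\}$ into a rail domino of $C_i$ together with a rung two positions further along $P$. This leaves the total number of horizontal edges unchanged, turns $D_1$ into a $C_i$-rail domino, and produces a new consecutive same-side pair with $\ell-1$ rail dominoes between them, so the induction hypothesis finishes the argument.

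The hard part will be that the far rail $C_{i+1}$ need not cooperate: the vertices across $D_1$ may instead be matched by horizontal edges running to the next $I$-cycle $C_{i+2}$, or by a rail edge of $C_{i+1}$ that is \emph{offset}, sitting below a gap between two consecutive rail dominoes $D_j,D_{j+1}$. In the first situation the two offending edges form an adjacent same-side pair of $M$ on $R_{i+1,i+2}$, and flipping the face they bound already deletes two horizontal edges; the genuinely delicate case is the offset one, where a direct check shows that no single face of the window is $M$-alternating, so the slide is blocked and no purely local move can succeed. To handle it I would take the $M$-alternating path $y_1 \xrightarrow{h_1} x_1 \to \dots \to x_2 \xrightarrow{h_2} y_2$ running along $P$ and close it into an $M$-alternating cycle $Z$ by an $M$-alternating $y_2$--$y_1$ path along $C_{i+1}$ (detouring through $C_{i+2}$ only where forced), chosen so that $M\oplus Z$ has two fewer horizontal edges; because the subregion swept out by such a cycle is a disc, the rotation $M\mapsto M\oplus Z$ is a product of face flips, as the flip graph of a simply-connected region is connected. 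Guaranteeing simultaneously that this closing path exists and that the resulting rotation strictly lowers the horizontal count is the crux, and this is exactly where the structure of the prisms $R_{j,j+1}$ recorded in Lemma~\ref{le61} and Corollary~\ref{lem1} would be used.
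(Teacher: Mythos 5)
Your base case is fine and the goal is right, but the inductive ``slide'' step has a genuine gap, and you flag it yourself: in the offset case you never construct the closing $M$-alternating cycle $Z$, never show it is null-homotopic (on a torus a cycle made of an arc of $C_i$, an arc of $C_{i+1}$ and two rungs bounds a disc only if the return arc is the one directly opposite $P$ in the annulus $R_{i,i+1}$, and that is precisely the arc whose vertices may be badly matched), and never show that $M\oplus Z$ has fewer horizontal edges --- every detour of $Z$ through $C_{i+2}$ inserts new horizontal edges into $M\oplus Z$, so the count can fail to drop. Even before that, your case analysis across $D_1$ is incomplete: you resolve ``both opposite vertices matched into $C_{i+2}$'' and ``offset rail,'' but the mixed configuration (one opposite vertex matched horizontally into $C_{i+2}$, the other by an offset rail edge of $C_{i+1}$) fits neither resolution, and nothing rules out the offset configuration recurring at every stage so that the slide never terminates. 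As written the argument does not close.

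The paper's proof replaces the induction by an extremal choice that dissolves the whole difficulty: among \emph{all} $I$-cycles and all consecutive same-side pairs, pick the pair whose witness path $P$ (your $x_1$--$x_2$ arc) is shortest, say $P=v_{0,j}\cdots v_{l,j}$ with $l$ odd. For $l\geq 3$ the opposite path $Q=v_{1,j+1}\cdots v_{l-1,j+1}$ then satisfies $M\cap\nabla(Q)=\emptyset$: the vertices of $P$ are saturated inside $P$ and $v_{0,j+1},v_{l,j+1}$ are saturated by $h_1,h_2$, so any edge of $M\cap\nabla(Q)$ would have to be horizontal into column $j+2$, and two consecutive such edges would exhibit a same-side pair on $C_{i+1}$ with a strictly shorter witness path, contradicting minimality. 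Hence $Q$ is internally matched by vertical edges, and the $2\times(l+1)$ window between $P$ and $Q$ admits an explicit sequence of $l$ face flips ($S_0,\dots,S_{(l-3)/2}$ followed by $T_0,\dots,T_{(l-1)/2}$ in the paper's notation) ending in a matching with exactly two fewer horizontal edges. If you want to keep your inductive framing, this global minimality is the missing ingredient that guarantees the obstructions you list cannot occur for the pair you work on.
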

\begin{proof} For an $I$-cycle $C$, let $h_1$ and $h_2$ be two consecutive horizontal edges of $M\cap \nabla(C)$ on the same side, where $x_k$ is the end vertex of $h_k$ on $C$. Since $h_1$ and $h_2$ are consecutive, let $P$ be a path of $C$ between $x_1$ and $x_2$ such that all intermediate vertices are incident to edges of $M\cap E(P)$. Choose such a shortest path $P$ among all $I$-cycles $C$.

Without loss of generality, we assume that $P=v_{0,j}v_{1,j}\cdots v_{l,j}$ for some $j\in Z_{2m}$. Then $\{v_{0,j}v_{0,j+1}, v_{1,j}v_{2,j}, v_{3,j}v_{4,j}, \dots, v_{l-2,j}v_{l-1,j},v_{l,j}v_{l,j+1}\}\subseteq M$ and $l$ is odd. If $l=1$, then by a flip along $v_{0,j}v_{1,j}v_{1,j+1}v_{0,j+1}v_{0,j}$, we obtain a perfect matching $M'$ which has two horizontal edges less than $M$. If $l\geq 3$, then we claim that $|M\cap \nabla(Q)|=0$, where $Q=v_{1,j+1}v_{2,j+1}\cdots v_{l-1,j+1}$ is a path.
Suppose to the contrary that $|M\cap \nabla(Q)|\neq 0$.
Since $Q$ contains an even number of vertices, $|M\cap \nabla(Q)|\geq 2$ is even. Let $v_{s,j+1}v_{s,j+2}$ and $v_{t,j+1}v_{t,j+2}$ be two consecutive horizontal edges of $M\cap \nabla(Q)$, where $1\leq s<t\leq l-1$. Then $Q'=v_{s,j+1}v_{s+1,j+1}\cdots v_{t,j+1}$ is a sub-path of $Q$ that is shorter than $P$, which is a contradiction.

Let $S_i=v_{2i+1,j}v_{2i+2,j}v_{2i+2,j+1}v_{2i+1,j+1}v_{2i+1,j}$ and $T_k=v_{2k,j}v_{2k+1,j}v_{2k+1,j+1}v_{2k,j+1}v_{2k,j}$ be a face of $T(2n+1,2m,2r)$, respectively, for $i\in Z_{\frac{l-1}{2}}$ and $k\in Z_{\frac{l+1}{2}}$. Then $M'=M\oplus S_0\oplus S_1 \oplus \cdots \oplus S_{\frac{l-3}{2}}\oplus T_0\oplus T_1 \oplus \cdots \oplus T_{\frac{l-1}{2}}$ is a perfect matching of $T(2n+1,2m,2r)$ such that the number of horizontal edges of $M'$ is two less than that of $M$.
\end{proof}

By Theorem \ref{bipartite}, the non-bipartite quadriculated tori have three cases: $T(2n+1,2m,2r)$, $T(2n,2m,2r-1)$ and $T(2m,2n+1,r)$. Next we mainly prove Theorem \ref{comnum} for $T(2n+1,2m,2r)$, and use Proposition \ref{drawing} to reduce the other two cases into this case.

\subsection{\small Proof for the quadriculated torus $T(2n+1,2m,2r)$}
Let $M_1=E_0\cup E_2\cup \cdots \cup E_{2m-2} \text{ and } M_2=E_1\cup E_3\cup \cdots\cup E_{2m-1}$ be two special perfect matchings of $T(2n+1,2m,2r)$. Then we have the following lemma.
\begin{lem}\label{lem11} For $n\geq1$, $m\geq 2$ and $1\leq r\leq m$, $M_1$ and $M_2$ lie in different components of $R_t(T(2n+1,2m,2r))$.
\end{lem}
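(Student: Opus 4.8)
The plan is to exhibit a quantity attached to each tiling that is preserved by every flip but takes different values on $M_1$ and $M_2$; this immediately forces the two matchings into different components. The natural candidate is the parity of the number of matching edges crossing a fixed ``vertical seam'' of the torus. Concretely, I would fix the edge set $E_0=\{v_{i,0}v_{i,1}\mid i\in Z_{2n+1}\}$ and study the parity of $|M\cap E_0|$ over all perfect matchings $M$ reachable from $M_1$ by flips.

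First I would verify that this parity is a flip invariant. A flip replaces $M$ by $M\oplus\partial F$ for a single face $F$ (a unit square, i.e. a $4$-cycle), so $|M\cap E_0|$ changes by $|\partial F\cap E_0|-2|M\cap\partial F\cap E_0|$, whose parity equals that of $|\partial F\cap E_0|$. Thus it suffices to check that every face boundary meets $E_0$ in an even number of edges. A face lying between the $0$- and $1$-columns and spanning rows $i$ and $i+1$ has exactly the two horizontal edges $v_{i,0}v_{i,1}$ and $v_{i+1,0}v_{i+1,1}$ in $E_0$, hence meets $E_0$ in $2$ edges; every other face is disjoint from $E_0$. Therefore $|\partial F\cap E_0|\in\{0,2\}$ is always even, and the parity of $|M\cap E_0|$ is unchanged by any flip.

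It then remains to evaluate the invariant on the two matchings. Since $E_0\subseteq M_1$ we get $|M_1\cap E_0|=|E_0|=2n+1$, which is odd, whereas $M_2=E_1\cup E_3\cup\cdots\cup E_{2m-1}$ is edge-disjoint from $E_0$, so $|M_2\cap E_0|=0$ is even. The parity invariant therefore separates $M_1$ and $M_2$, and no sequence of flips can transform one into the other; hence they lie in different components of $R_t(T(2n+1,2m,2r))$.

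The content of the argument is the observation that $E_0$ is a cut (cocycle) of the embedded graph: counting its edges in $M$ modulo $2$ descends to the $\mathbb{Z}/2$-homology class $[M\oplus M_1]$, which no flip can change. Equivalently, $M_1\oplus M_2$ is the disjoint union of the $2n+1$ row cycles, and its class is nonzero precisely because the number of rows $2n+1$ is odd. The only step I expect to require genuine care is the verification that each face meets $E_0$ in an even number of edges, which is what makes the parity truly flip invariant; the separation itself then rests on the elementary facts that $|E_0|=2n+1$ is odd and that $M_2\cap E_0=\emptyset$, the latter using that the number of columns $2m$ is even so that $M_1$ and $M_2$ really are all-horizontal perfect matchings.
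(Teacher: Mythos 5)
Your key step fails. In $T(2n+1,2m,2r)$ the vertical identification carries a torsion of $2r$ squares: $v_{2n,j}$ is adjacent to $v_{0,j+2r}$, so the face glued between the bottom and top rows over columns $0$ and $1$ is bounded by $v_{2n,0}v_{2n,1}v_{0,2r+1}v_{0,2r}v_{2n,0}$, and its two horizontal edges are $e_{2n,0}$ and $e_{0,2r}$. Unless $r=m$, the edge $e_{0,2r}$ does not lie in $E_0$, so this face meets $E_0$ in exactly \emph{one} edge; symmetrically, the wrap-around face over columns $2m-2r$ and $2m-2r+1$ contains $e_{0,0}$ as its only edge of $E_0$. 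A flip along either of these faces changes $|M\cap E_0|$ by $\pm 1$, so the parity of $|M\cap E_0|$ is not a flip invariant, and the argument collapses exactly at the step you yourself flagged as the one needing care. (Relatedly, $E_0$ is not a cocycle here: the row cycle $v_{0,0}v_{0,1}\cdots v_{0,2m-1}v_{0,0}$ meets it in exactly one edge.) Your proof is valid only in the untwisted case $r=m$, whereas the lemma is claimed for all $1\le r\le m$.

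The idea can be repaired, and the paper's proof is essentially the repaired version: the dual curve through the faces $f_1,\dots,f_{2n+1}$ lying between columns $0$ and $1$ must be closed up by appending the $2r$ faces $g_i$ bounded by $v_{0,i}v_{1,i}v_{1,i+1}v_{0,i+1}v_{0,i}$ for $1\le i\le 2r$, which lead from $f_{2n+1}$ back to $f_1=g_0$. Equivalently, replace $E_0$ by the edge set $S=\{e_{1,0},\dots,e_{2n,0},e_{0,2r}\}\cup\{v_{0,i}v_{1,i}\mid 1\le i\le 2r\}$: every face of the torus meets $S$ in an even number of edges, while $|M_1\cap S|=2n+1$ is odd and $|M_2\cap S|=0$ is even, which gives the separation. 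The paper phrases this as a parity count of the multiplicities $\delta(f)$ of faces in a hypothetical flip sequence from $M_1$ to $M_2$, writing one congruence per edge of $S$ and summing to reach the contradiction that an odd quantity equals an even one.
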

\begin{proof}Suppose to the contrary that $M_1$ and $M_2$ lie in the same component of $R_t(T(2n+1,2m,2r))$. Then there is a path in $R_t(T(2n+1,2m,2r))$ connecting $M_1$ and $M_2$. That is, $M_2=M_1\oplus S_1\oplus\cdots \oplus S_k$ where $S_i$ is a square of $T(2n+1,2m,2r)$ for $i=1,2,\dots,k$.
For a square $f$, we denote by $\delta(f)$ the number of times that $f$ appears in the sequence $(S_1,S_2,\dots,S_k)$.
\begin{figure}[h]
\centering
\includegraphics[height=3.8cm,width=7cm]{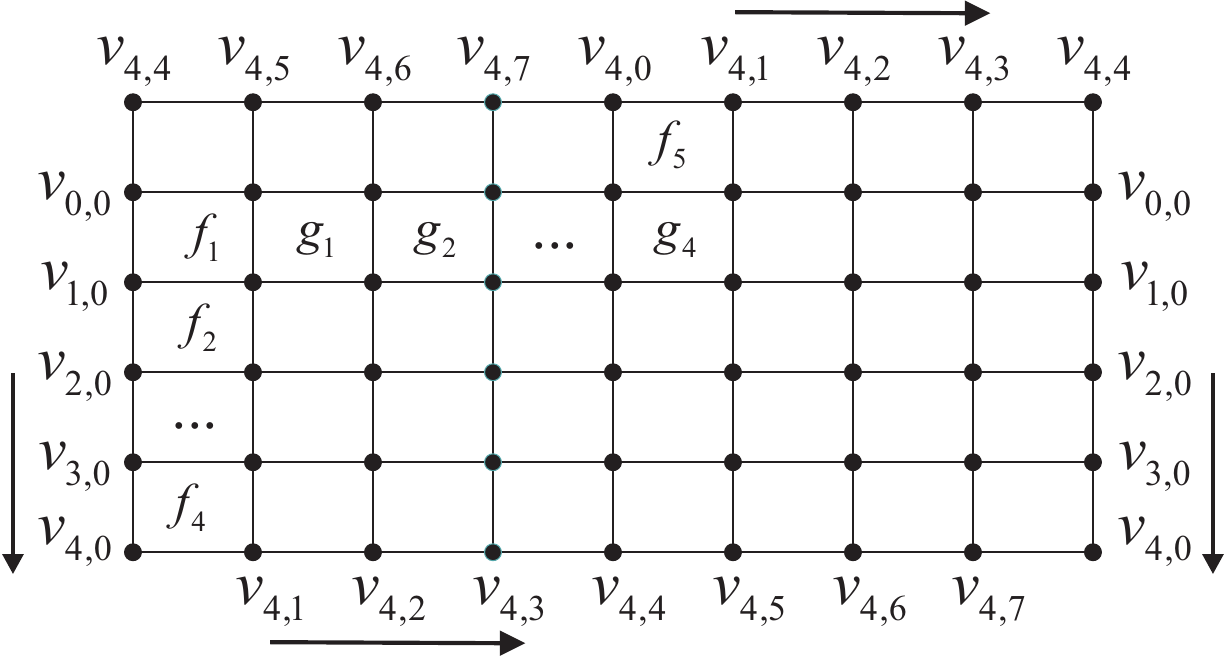}
\caption{\label{atleasttwo}Some squares of $T(5,8,4)$.}
\end{figure}

For $1\leq i\leq 2n$, let $f_{i}$ and $f_{i+1}$ be the two faces of $T(2n+1,2m,2r)$ having edge $e_{i,0}$ in common shown as Figure \ref{atleasttwo}.
Since $e_{i,0}\in M_1$ but $e_{i,0}\notin M_2$, $e_{i,0}$ appears odd times in the sequence $(S_1,S_2,\dots,S_k)$. Thus
\begin{eqnarray}
\delta(f_{i})+\delta(f_{i+1})\equiv 1\text{ (mod 2) } \text{for } 1\leq i\leq 2n.\label{eq1}
\end{eqnarray}

Let $g_i$ be a face of $T(2n+1,2m,2r)$ bounded by $v_{0,i}v_{1,i}v_{1,i+1}v_{0,i+1}v_{0,i}$ for $1\leq i\leq 2r$. Then $f_{2n+1}$ and $g_{2r}$ have
exactly one edge $e_{0,2r}$ in common, and $g_{i-1}$ and $g_i$ have the common edge $v_{0,i}v_{1,i}$ for $1\leq i\leq 2r$ where $g_0:=f_1$.
Since $e_{0,2r}\in M_1$ but $e_{0,2r}\notin M_2$, $e_{0,2r}$ appears odd times in the sequence $(S_1,S_2,\dots,S_k)$. So we have
\begin{eqnarray}
\delta(f_{2n+1})+\delta(g_{2r})\equiv 1 \text{ (mod 2)}.\label{eq2}
\end{eqnarray}

Since $v_{0,i}v_{1,i}$ lies in neither $M_1$ nor $M_2$ for $1\leq i\leq 2r$, we obtain that
\begin{eqnarray}
\delta(g_{i-1})+\delta(g_i)\equiv 0 \text{ (mod 2)} \text{ for } 1\leq i\leq 2r.\label{eq3}
\end{eqnarray}

Now, we add up $2n+2r+1$ equalities in $(\ref{eq1})$-$(\ref{eq3})$ together. The resulting right side is odd since $2n+1$ equalities equal to 1 and the others equal to 0, but the left side is even since each $\delta (f)$ appears twice for $f\in \{f_1,f_2\dots,f_{2n+1},g_1,\dots,g_{2r}\}$, which is a contradiction.
\end{proof}

Lemma \ref{lem11} states that $R_t(T(2n+1,2m,2r))$ has at least two components. In the sequel, we shall prove that $R_t(T(2n+1,2m,2r))$ has at most two components. That is to say, any perfect matching $M$ of $T(2n+1,2m,2r)$ can be converted into the perfect matching $M_1$ or $M_2$ by some flips. First of all, we consider the case that $M$ consists of vertical edges.

\begin{lem}\label{lem61}For $n\geq1$, $m\geq 2$ and $1\leq r\leq m$, if $M$ is a perfect matching of $T(2n+1,2m,2r)$ consisting of vertical edges, then we can obtain the perfect matching $M_1$ or $M_2$ by a series of flips from $M$.
\end{lem}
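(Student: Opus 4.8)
The plan is to reduce an arbitrary all-vertical perfect matching $M$ to a \emph{standard} vertical matching on which neighbouring $I$-cycles agree, and then push that standard matching to $M_1$ or $M_2$ by the flip mechanism already packaged in Lemma~\ref{le61}. Recall that since $M$ consists only of vertical edges, on each $I$-cycle $C_i$ (a cycle of even length $\tfrac{2m(2n+1)}{2(r,m)}$) the matching $M\cap E(C_i)$ is one of the \emph{two} perfect matchings of that even cycle. So $M$ is completely encoded by a choice, for each $i\in Z_{2(r,m)}$, of which of the two alternating patterns $C_i$ carries; equivalently by whether $v_{0,i}v_{1,i}\in M$ for each column, consistent around each $I$-cycle. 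The target matchings $M_1$ and $M_2$ are precisely the two \emph{globally consistent} all-vertical matchings in which every pair of consecutive $I$-cycles $C_i,C_{i+1}$ has the same form on $M$ (corresponding to the alternating choices $E_0\cup E_2\cup\cdots$ versus $E_1\cup E_3\cup\cdots$). [Here I am using ``the same form'' exactly as defined just before Lemma~\ref{le61}.]

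First I would set up a potential/counting function measuring the ``disagreements'' between consecutive $I$-cycles, namely the number of indices $i\in Z_{2(r,m)}$ for which $C_i$ and $C_{i+1}$ do \emph{not} have the same form on $M$. The strategy is a descent argument: show that whenever this disagreement count is positive, some sequence of flips strictly decreases it (or, dually, whenever two adjacent cycles already agree, Lemma~\ref{le61}(ii)--(iii) lets us freely toggle the common form without creating new horizontal edges). Concretely, for a pair $C_i,C_{i+1}$ that currently \emph{do} have the same form, Lemma~\ref{le61}(ii) converts the $R_{i,i+1}$-part of $M$ into an all-horizontal matching of $R_{i,i+1}$ by $\tfrac{m(2n+1)}{2(r,m)}$ flips, and then Lemma~\ref{le61}(iii) lets us flip it back into $M'=(E(C_i)\cup E(C_{i+1}))\oplus M$, which \emph{switches} the shared form on those two cycles while leaving every other cycle untouched. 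Thus, reading the cycles cyclically $C_0,C_1,\dots,C_{2(r,m)-1}$, I can propagate a chosen form outward one cycle at a time, toggling each cycle to match its predecessor, and after sweeping through all $2(r,m)$ cycles the whole matching becomes globally consistent.

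The subtle point, which I expect to be the main obstacle, is the \emph{cyclic} closure: the $I$-cycles are arranged around the torus, so after toggling $C_1$ to agree with $C_0$, then $C_2$ with $C_1$, and so on up to $C_{2(r,m)-1}$, one must verify that the last cycle $C_{2(r,m)-1}$ is automatically in agreement with $C_0$ (closing the loop) rather than possibly forced into the opposite parity. This is a parity question: the number of form-switches one is forced to make around the full cycle of $I$-cycles must come out even, so that the sweep terminates at a genuinely consistent configuration. I would resolve this by observing that the two globally consistent all-vertical matchings $M_1,M_2$ are the only fixed points of the toggling process, and that the parity of the number of ``form-boundaries'' around the torus is an invariant determined by which component $M$ lies in --- an invariant of exactly the kind isolated in the proof of Lemma~\ref{lem11}. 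In fact the cleanest route is to argue directly that \emph{any} all-vertical $M$ already has an even number of form-disagreements around the cycle (since the forms are a $\{0,1\}$-assignment to the cyclic sequence of cycles and a cyclic sequence has an even number of value-changes), hence the greedy sweep closes up consistently and lands on $M_1$ or $M_2$.

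A secondary technical step, worth flagging, is the case analysis hidden in Lemma~\ref{le61}: the lemma is stated for indices $j\in Z_{(2r,2m)}$, and the ``same form'' hypothesis in part (ii) is exactly what I must guarantee holds for the specific adjacent pair I am toggling. So in the descent I must always apply the toggle to a pair that currently agrees, which the $\{0,1\}$-sequence picture guarantees exists (indeed all pairs can be made to agree in turn). Once global consistency is reached, the resulting matching is by definition either $M_1$ (if it carries the even-indexed form) or $M_2$ (the odd-indexed form), completing the reduction. I would close by noting that every move used is a flip or a composition of flips, so the entire process is a walk in $R_t(T(2n+1,2m,2r))$, which is precisely what the statement of Lemma~\ref{lem61} asserts.
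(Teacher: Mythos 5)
Your reduction has a genuine gap, and it sits exactly at the point you flag as ``the subtle point'': the parity claim is false, and with it the existence of your target configuration. Because each column of $T(2n+1,2m,2r)$ contains an odd number $2n+1$ of vertices, an all-vertical $M$ restricted to an $I$-cycle forces \emph{consecutive columns of that same $I$-cycle} (in successor order) to carry \emph{opposite} forms; in particular the columns of $C_0$ split half-and-half between the two forms (this is the first observation in the paper's proof). A short computation then shows that columns $0$ and $t$, where $t=(2r,2m)$ --- both lying on $C_0$, and being the representatives that decide whether the pair $(C_{t-1},C_0)$ agrees --- always carry \emph{different} forms: the successor-position $\sigma$ of column $t$ in $C_0$ satisfies $\tfrac{r}{(r,m)}\sigma\equiv 1 \ (\mathrm{mod}\ \tfrac{m}{(r,m)})$ with $\tfrac{m}{(r,m)}$ even, so $\sigma$ is odd. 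The chain of comparisons deciding the pairs $(C_0,C_1),(C_1,C_2),\dots,(C_{t-1},C_0)$ is column $0$ vs.\ $1$, $1$ vs.\ $2$, \dots, $t-1$ vs.\ $t$, which telescopes to the comparison of columns $0$ and $t$; hence the number of form-disagreements around the cyclic sequence of $I$-cycles is always \emph{odd}, not even. Since each toggle you describe (Lemma \ref{le61}(ii)--(iii) applied to an agreeing pair) flips the forms of two adjacent $I$-cycles and therefore changes the disagreement count by $0$ or $\pm 2$, your sweep can never reach the state ``all consecutive $I$-cycles agree'' --- that state simply does not exist for an all-vertical matching. (Relatedly, $M_1=E_0\cup E_2\cup\cdots$ and $M_2$ are all-\emph{horizontal} matchings, not the ``two globally consistent all-vertical matchings'' you describe; your opening paragraph and your closing sentence contradict each other on this.)

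The fix, which is what the paper actually does, is to aim for a weaker but achievable target: arrange that only the alternating pairs $(C_0,C_1),(C_2,C_3),\dots,(C_{t-2},C_{t-1})$ agree (or the complementary set $(C_1,C_2),\dots,(C_{t-1},C_0)$), which is compatible with an odd disagreement count because these pairs form a perfect matching of the cycle of $I$-cycles rather than all of its edges. Once that holds, one application of Lemma \ref{le61}(ii) to each $R_{2l,2l+1}$ converts everything to horizontal edges and yields exactly $M_1$ (resp.\ $M_2$). The paper reaches this target by normalizing so that column $0$ is ``down'' and column $t$ is ``up'' and inducting on the number $\alpha$ of ``up'' columns among columns $1,\dots,t-1$, using your toggle (Lemma \ref{le61}(iii)) on carefully chosen runs of agreeing pairs to decrease $\alpha$. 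Your descent would need to be reorganized along these lines; as written, the invariant you rely on points the wrong way.
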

\begin{proof}
Since $M$ consists of vertical edges, each $I$-cycle is even and $M$-alternating. Hence, we obtain that $r\leq m-1$. By Corollary \ref{lem1}(\romannumeral1), $T(2n+1,2m,2r)$ has $t:=2(r,m)$ $I$-cycles. By  Corollary \ref{lem1}(\romannumeral3), $C_0$ consists of $0$-, $t$-, \dots, $(\frac{m}{(r,m)}-1)t$-columns.
Because $C_0$ is $M$-alternating and each column contains $2n+1$ vertices, it has half of columns being the same form on $M$ and the other half being the other form on $M$. Thus, there exist $x$- and $(x+t)$-columns having different form on $M$ where $x\in Z_{2m}$.  Without loss of generality, we assume that $x=0$ and $\{v_{0,0}v_{1,0},v_{0,t}v_{2n,2m-2r+t}\}\subseteq M$.

Let
$\alpha=|M\cap \{v_{0,1}v_{2n,2m-2r+1},v_{0,2}v_{2n,2m-2r+2},\dots,v_{0,t-1}v_{2n,2m-2r+t-1}\}|$.
Then $0\leq \alpha\leq t-1$. We will prove the lemma by induction on $\alpha$. For $\alpha=0$, all $t$ $I$-cycles have the same form on $M$. Since $M\cap E(R_{2l,2l+1})$ is a perfect matching of $R_{2l,2l+1}$ for $0\leq l\leq \frac{t}{2}-1$, by repeatedly applying Lemma \ref{le61}(\romannumeral2) $\frac{t}{2}$ times, we can obtain the perfect matching $M_1$.
\begin{figure}[h]
\centering
\includegraphics[height=2.5cm,width=16cm]{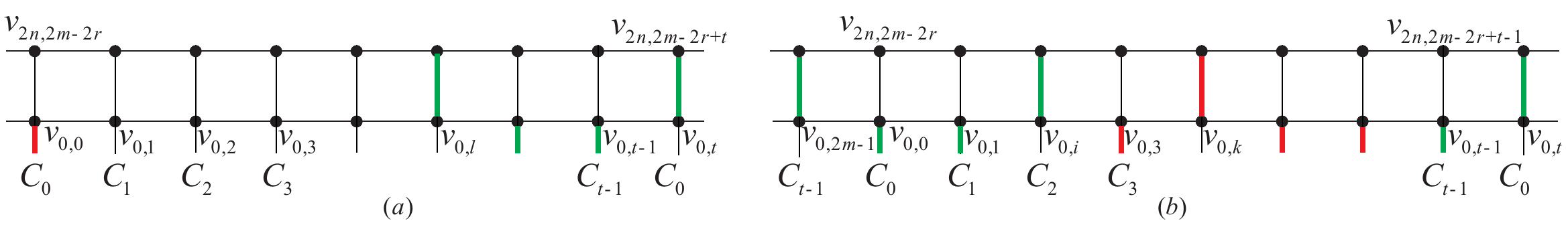}
\caption{\label{icycle}Description of the proof of Lemma \ref{lem61}.}
\end{figure}

Next we assume that $\alpha\geq 1$. If there exist two positive integers $l$ and $s$ with $1\leq l<l+s\leq t$ such that
$\{v_{0,l}v_{2n,2m-2r+l},v_{0,l+s}v_{2n,2m-2r+l+s}\}\cup \{v_{0,l+1}v_{1,l+1},\dots,v_{0,l+s-1}v_{1,l+s-1}\}\subseteq M$ shown as Figure \ref{icycle}(a), and $s-1$ is an even number, then $\{C_{l+1}, C_{l+2}, \dots, C_{l+s-1}\}$ have the same form on $M$. For $R_{l+1,l+2},R_{l+3,l+4},\dots,R_{l+s-2,l+s-1}$, we separately perform $\frac{m(2n+1)}{(r,m)}$ flips from its perfect matching by Lemma \ref{le61}(\romannumeral3) and last result in a perfect matching $M'$ consisting of vertical edges. Now $\{C_l, C_{l+1},  \dots,C_{l+s}\}$ have the same form on $M'$. By performing $\frac{m(2n+1)}{(r,m)}$ flips separately on $R_{l,l+1},R_{l+2,l+3},\cdots,R_{l+s-1,l+s}$ by Lemma \ref{le61}(\romannumeral3), we result in a perfect matching $M''$ which consists of vertical edges and $\alpha''=\alpha-1$ or $\alpha-2$.
By the induction hypothesis, we can obtain the perfect matching $M_1$ or $M_2$ from $M''$ by some flips. Thus, $M_1$ or $M_2$ can be obtained from $M$ by a series of flips.

Now we assume that $s-1$ is an odd number if there exist above $l$ and $s$. Thus $v_{0,t-1}v_{1,t-1}\in M$.
Since $\alpha\geq1$, there exist the maximum index $k$ with $1\leq k\leq t-2$ such that $v_{0,k}v_{2n,2m-2r+k}\in M$ shown in Figure \ref{icycle}(b). Then all edges of $\{v_{0,k+1}v_{1,k+1},\dots,v_{0,t-2}v_{1,t-2}\}$ are contained in $M$. Take $l=k$ and $l+s=t$. Since $s-1$ is an odd number, $s$ is an even number and so is $k$.
Let $i$ be the minimum index such that $v_{0,i}v_{2n,2m-2r+i}\in M$ where $1\leq i\leq t-2$. Then $\{v_{0,1}v_{1,1},\dots, v_{0,i-1}v_{1,i-1}\}\subseteq M$ and $i$ is an even number. Since $\{v_{0,t-1}v_{1,t-1},v_{0,t}v_{2n,2m-2r+t}\}\subseteq M$, $C_{t-1}$ and $C_0$ have different form on $M$. Combining the condition that  $v_{0,0}v_{1,0}\in M$, we have $v_{0,2m-1}v_{2n,2m-2r-1}\in M$.
 Now $\{C_0,C_1,\dots, C_{i-1}\}$ have the same form on $M$. We
perform $\frac{m(2n+1)}{(r,m)}$ flips on each $R_{0,1}, R_{2,3},\dots, R_{i-2,i-1}$ by Lemma \ref{le61}(\romannumeral3) and result in a perfect matching $M'$ consisting of vertical edges. Then $\{C_{t-1},C_0,\dots, C_{i}\}$ have the same form on $M'$. By repeatedly applying Lemma \ref{le61}(\romannumeral3) on each $R_{t-1,0}, R_{1,2},\dots,R_{i-1,i}$, we can obtain a perfect matching $M''$ by some flips from $M'$ such that $\alpha''=\alpha$ but $v_{0,t-1}v_{2n,2m-2r+t-1}\in M''$. Taking $l=t-1$ and $s=1$, we obtain that $s-1$ is an even number. By previous arguments, we have done.
\end{proof}

Now we prove that any perfect matching of $T(2n+1,2m,2r)$ can be converted into the perfect matching $M_1$ or $M_2$ by a series of flips.

\begin{lem}\label{l1}Let $M$ be a perfect matching of $T(2n+1,2m,2r)$ where $n\geq1$, $m\geq 2$ and $1\leq r\leq m$. Then, by a series of flips from $M$, we can obtain the perfect matching $M_1$ or $M_2$.
\end{lem}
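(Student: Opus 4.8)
Lemma \ref{l1} — Proof plan.

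The goal is to show every perfect matching $M$ of $T(2n+1,2m,2r)$ can be flipped into $M_1$ or $M_2$. The plan is to proceed by induction on the number of horizontal edges of $M$, reducing each $M$ either to a matching with fewer horizontal edges or directly to the all-vertical case already handled by Lemma \ref{lem61}. First I would observe that since each column of $T(2n+1,2m,2r)$ has an odd number $2n+1$ of vertices, a matching consisting purely of vertical edges inside a single $I$-cycle is impossible unless the cycle is traversed consistently; more importantly, in any perfect matching the number of horizontal edges incident to a fixed $I$-cycle $C$ — that is, $|M\cap\nabla(C)|$ — must be \emph{even}, since $C$ has an even number of vertices $\frac{(2n+1)m}{(r,m)}$ and the vertical edges of $M$ inside $C$ cover an even subset, forcing the remaining vertices (covered by horizontal edges crossing $\nabla(C)$) to be even in number.

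The core of the argument is the following dichotomy. If $M$ has no horizontal edges at all, then $M$ consists of vertical edges and we invoke Lemma \ref{lem61} to reach $M_1$ or $M_2$, finishing. Otherwise some $I$-cycle $C$ satisfies $|M\cap\nabla(C)|\geq 2$. The key step is to produce \emph{two consecutive horizontal edges on the same side} of some $I$-cycle, so that Lemma \ref{l3} applies and strictly decreases the number of horizontal edges by two, closing the induction. To guarantee such a consecutive same-side pair exists whenever horizontal edges are present, I would examine the edges of $M\cap\nabla(C)$ in their cyclic order around $C$: each such horizontal edge lies in $R_{i-1,i}$ or $R_{i,i+1}$, i.e. points to the "left" or "right" side of $C$. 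If two cyclically-adjacent crossing edges lie on the same side, they are consecutive in the sense of the definition preceding Lemma \ref{l3} (the arc of $C$ between their feet is covered by vertical edges of $M$), and we are done by Lemma \ref{l3}. If instead the sides strictly alternate left/right all the way around $C$, I would argue that this global alternation cannot persist across every $I$-cycle simultaneously, because the horizontal edges crossing between $C_i$ and $C_{i+1}$ are shared data: an edge counted as "right" from $C_i$ is counted as "left" from $C_{i+1}$. A counting/parity argument across the full cyclic family $C_0,\dots,C_{2(r,m)-1}$ then shows some cycle must carry a same-side consecutive pair, or else $M$ has no horizontal edges at all.

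The main obstacle I anticipate is exactly this last combinatorial step: ruling out the "perfectly alternating on every $I$-cycle" configuration. A naive local check is insufficient because alternation can be locally consistent; the contradiction must come from a global invariant. I would expect the cleanest route is to track, for each pair of consecutive $I$-cycles $C_i, C_{i+1}$, the parity of the position at which horizontal edges cross, and to show that full alternation around all cycles forces a height or parity mismatch incompatible with $2n+1$ being odd — the same oddness that drove the separation in Lemma \ref{lem11}. Once a same-side consecutive pair is secured on some $I$-cycle, Lemma \ref{l3} reduces the horizontal-edge count by two, the induction hypothesis applies to the resulting matching, and we obtain $M_1$ or $M_2$, completing the proof.
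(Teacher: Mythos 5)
Your overall strategy is the same as the paper's: induct on the number $l$ of horizontal edges, dispose of $l=0$ via Lemma \ref{lem61}, and otherwise locate two consecutive horizontal edges of $M\cap\nabla(C)$ on the same side of some $I$-cycle so that Lemma \ref{l3} drops $l$ by two. However, there are two genuine gaps. First, your opening claim that $|M\cap\nabla(C)|$ is always even because an $I$-cycle has an even number of vertices is false: an $I$-cycle of $T(2n+1,2m,2r)$ has $\frac{(2n+1)m}{(r,m)}$ vertices, which is odd whenever $\frac{m}{(r,m)}$ is odd (e.g.\ $T(3,6,2)$, where each of the two $I$-cycles has $9$ vertices). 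In that situation $|M\cap\nabla(C)|$ is odd for every $I$-cycle, and the extreme case $|M\cap\nabla(C)|=1$ for all $C$ actually occurs; there your dichotomy breaks down, since no same-side consecutive pair exists and Lemma \ref{l3} cannot be applied. The paper treats this subcase separately (Case 1 of its proof), using Lemma \ref{le61}(\romannumeral1) to convert each $R_{2i,2i+1}$ carrying the single crossing edge into an all-horizontal matching, reaching $M_1$ directly. On the other hand, when the count is odd and at least $3$, the same-side pair comes for free by pigeonhole — a case your even-count assumption also hides.

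Second, the step you yourself flag as the main obstacle — ruling out the configuration in which the crossing edges strictly alternate sides around every $I$-cycle — is precisely the heart of the paper's Case 2, and you do not actually carry it out. The paper's argument is not a soft counting observation about shared data between $C_i$ and $C_{i+1}$: it uses the relabeling of Lemma \ref{prop} and the congruence $(r,m)\equiv rk \pmod m$ to show that when $\frac{m}{(r,m)}$ is even the twist parameter $k$ must be odd, so that the unique ``wrap-around'' identification $x^{(2r,2m)-1}\mapsto y^0$ reverses parity of the label. Propagating a fixed parity of crossing positions through $R_{0,1},R_{1,2},\dots$ all the way around then forces every endpoint of $M\cap\nabla(C_0)$ on $C_0$ to sit at an odd label, so consecutive such endpoints bound arcs with an odd number of interior vertices, which cannot be perfectly matched by vertical edges — the contradiction. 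Your sketch correctly guesses that the oddness of $2n+1$ must enter, but without this explicit computation the inductive step is not established. I would also note that the $l=1$ base case (possible only when the two $I$-cycles are odd) needs separate treatment, as in the paper.
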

\begin{proof}Let $l$ represent the number of horizontal edges in $M$. Then $l\geq 0$. We will prove the lemma by induction on $l$. For $l=0$, $M$ consists of vertical edges. By Lemma \ref{lem61}, we have done. For $l=1$, we obtain that all $I$-cycles contain an odd number of vertices by  Corollary \ref{lem1}(\romannumeral1). Moreover, $T(2n+1,2m,2r)$ has exactly two $I$-cycles. We assume that the unique horizontal edge of $M$ belongs to $R_{0,1}$ (resp. $R_{1,0}$). Then $M$ is uniquely determined, by Lemma \ref{le61}(\romannumeral1), we can obtain the perfect matching $M_1$ (resp. $M_2$) by  $\frac{m(2n+1)-(r,m)}{2(r,m)}$ flips.
Now we assume that $l\geq 2$. We divide into the following two cases according to the parity of the number of vertices in an $I$-cycle.

\textbf{Case 1.} An $I$-cycle is odd.

For an $I$-cycle $C$, $|M\cap \nabla(C)|$ is odd. We assume that $M\cap \nabla(C)$ contains exactly one edge for each $I$-cycle $C$. Then $T(2n+1,2m,2r)$ has exactly $(r,m)$ horizontal edges in $M$. Without loss of generality, we assume that $e_i$ is the unique horizontal edge of $R_{2i,2i+1}$ for $i\in Z_{(r,m)}$. By Lemma \ref{le61}(\romannumeral1), we can obtain a perfect matching $M'$ from $M$ by $\frac{m(2n+1)-(r,m)}{2(r,m)}$ flips which consists of all horizontal edges of $R_{2i,2i+1}$. Hence, by $\frac{m(2n+1)-(r,m)}{2}$ flips from $M$, we can obtain the perfect matching $M_1$.
Now we assume that there exists an $I$-cycle $C_i$ of $T(2n+1,2m,2r)$ such that $M\cap \nabla(C_i)$ contains at least three edges where $i\in Z_{(2r,2m)}$. Since an edge of $M\cap \nabla(C_i)$ belongs to either $R_{i-1,i}$ or $R_{i,i+1}$ and $|M\cap \nabla(C_i)|$ is an odd number, $T(2n+1,2m,2r)$ contains two consecutive horizontal edges on the same side.

\textbf{Case 2.} An $I$-cycle is even.

We will prove that there exists an $I$-cycle $C_i$ of $T(2n+1,2m,2r)$ and two consecutive horizontal edges of $M\cap \nabla(C_i)$ such that they are on the same side. Suppose to the contrary that any two consecutive horizontal edges of $M\cap \nabla(C)$ are on different sides for an $I$-cycle $C$.
Since $l\geq 2$, there exists an $I$-cycle $C_i$ such that $M\cap \nabla(C_i)\neq \emptyset$, where $i\in Z_{(2r,2m)}$. Combining that $C_i$ is an even cycle, $M\cap \nabla(C_i)$ contains an even number of  edges. Since any two consecutive horizontal edges are on different sides, $M$ contains horizontal edges of $R_{i,i+1}$ and also those of $R_{i-1,i}$ and such two sets of horizontal edges are equal in number. Hence, each $M\cap \nabla(C_i)$ contains the same number of edges for $i\in Z_{(2r,2m)}$ and half of which belong to $R_{i-1,i}$ and the others belong to $R_{i,i+1}$.

Since an $I$-cycle contains $\frac{m(2n+1)}{(r,m)}$ vertices, $\frac{m}{(r,m)}$ is an even number. Label the vertices of $C_x$ as $1^x,2^x,\dots,(\frac{m(2n+1)}{(r,m)})^x$ where $1^x=v_{0,x}$, $2^x=v_{1,x}$ and so on. Then Lemma \ref{prop} holds. We claim that $x$ and $y$ have different parity for a horizontal edge $x^{(2r,2m)-1}y^{0}$.
Since $(2r,2m)=2rk$ (mod $2m$), we have $rk-(r,m)= mp$ for some non-negative integer $p$. Thus $\frac{r}{(r,m)}k= \frac{m}{(r,m)}p+1$. Since $\frac{m}{(r,m)}$ is even, $\frac{r}{(r,m)}\cdot k$ is odd and $k$ is an odd number. Hence $\frac{m}{(r,m)}-k$ is also an odd number. By (\romannumeral2) in Lemma \ref{prop}, $x$ and $y$ have different parity.

Without loss of generality, assume that all edges of $M\cap E(R_{0,1})$ have the form $x^0x^1$ and $x$ is an odd number. Then all edges of $M\cap E(R_{1,2})$ have the form $y^1y^2$ where $y$ is an even number. Successively,
all edges of $M\cap E(R_{(2r,2m)-1,0})$ have the form $l^{(2r,2m)-1}p^0$ where $l$ is an even number and $p$ is an odd number by the claim. Now end vertices of all edges of $M\cap \nabla(C_0)$ on $C_0$ are represented as $x^0$ and all $x$ are odd. That is to say, the other vertices of $C_0$ are all incident to vertical edges of $M$, which is a contradiction.

By Cases 1 and 2, there exists two consecutive horizontal edges of $M\cap \nabla(C_i)$ on the same side. By Lemma \ref{l3}, we obtain a perfect matching $M'$ by some flips from $M$ such that $l'=l-2$. By the induction hypothesis, we obtain the perfect matching $M_1$ or $M_2$ from $M'$ by some flips. Thus, by some flips from $M$, we can obtain  $M_1$ or $M_2$.
\end{proof}

\begin{thm}\label{t21}For $n\geq1$, $m\geq 2$ and $1\leq r\leq m$, $R_t(T(2n+1,2m,2r))$ consists of two isomorphic components.
\end{thm}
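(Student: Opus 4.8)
The plan is to combine the two preceding lemmas and exhibit the isomorphism explicitly. By Lemma \ref{lem11}, the matchings $M_1$ and $M_2$ lie in different components of $R_t(T(2n+1,2m,2r))$, so the resonance graph has \emph{at least} two components. By Lemma \ref{l1}, every perfect matching $M$ can be carried by a series of flips to $M_1$ or $M_2$; since flips correspond to edges of $R_t$, this means every vertex of $R_t$ is connected by a path to $M_1$ or to $M_2$. Hence there are \emph{at most} two components, and combining the two bounds, exactly two. So the first task is merely to assemble Lemmas \ref{lem11} and \ref{l1} into the statement that $R_t(T(2n+1,2m,2r))$ has exactly two components, one containing $M_1$ and one containing $M_2$.

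\textbf{The remaining and more substantive task is the isomorphism.} I would produce an explicit automorphism of $T(2n+1,2m,2r)$ that swaps the two special matchings, and argue that the induced map on perfect matchings is a graph isomorphism of $R_t$ that interchanges the two components. The natural candidate is the translation $\varphi(v_{i,j})=v_{i,j+1}$ from Lemma \ref{auto}. Since $\varphi$ maps horizontal edges to horizontal edges, it sends the edge set $E_j$ to $E_{j+1}$, and therefore sends $M_1=E_0\cup E_2\cup\cdots\cup E_{2m-2}$ to $E_1\cup E_3\cup\cdots\cup E_{2m-1}=M_2$ (indices mod $2m$). The key point is that $\varphi$, being a face-preserving automorphism, induces an automorphism $\varphi^*$ of $R_t$: if $M\oplus M'$ bounds a face $f$, then $\varphi(M)\oplus\varphi(M')$ bounds the face $\varphi(f)$, so $\varphi^*$ preserves adjacency in the resonance graph. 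Thus $\varphi^*$ is an automorphism of $R_t$ sending the vertex $M_1$ to the vertex $M_2$.

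From here the argument is clean. Because $\varphi^*$ is an automorphism of $R_t$ carrying $M_1$ to $M_2$, it must map the connected component containing $M_1$ onto the connected component containing $M_2$; the restriction of $\varphi^*$ to that component is then the desired isomorphism between the two components. Since $M_1$ and $M_2$ lie in \emph{distinct} components (Lemma \ref{lem11}), these two images are exactly the two components identified above, and they are isomorphic as graphs.

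\textbf{The step I expect to need the most care} is verifying that $\varphi^*$ genuinely lands in the correct component structure, i.e.\ that $\varphi$ really does interchange (rather than fix) the two parity classes and that no perfect matching is fixed in a way that would collapse the two components into one. This reduces to checking that $\varphi(M_1)=M_2$ and $\varphi(M_2)=M_1$ under the index arithmetic modulo $2m$ — a short parity computation — together with the observation from Lemma \ref{lem11} that $M_1,M_2$ are genuinely in different components, so the automorphism cannot identify a component with itself. Once these are in place, the isomorphism and the count of exactly two components follow immediately, completing the proof of Theorem \ref{t21}.
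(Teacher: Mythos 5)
Your proposal is correct and follows essentially the same route as the paper: assemble Lemmas \ref{lem11} and \ref{l1} to get exactly two components, then use the face-preserving translation $\varphi$ of Lemma \ref{auto} (with $\varphi(M_1)=M_2$) to induce an automorphism of $R_t$ carrying one component onto the other. The only difference is cosmetic — the paper cites an external argument for why $\varphi$ preserves flips, while you spell out directly that $M\oplus M'$ bounding a face $f$ implies $\varphi(M)\oplus\varphi(M')$ bounds $\varphi(f)$.
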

\begin{proof}By Lemmas \ref{lem11} and \ref{l1}, $R_t(T(2n+1,2m,2r))$ has exactly two components containing $M_1$ and $M_2$ separately, which are denoted by $H_1$ and $H_2$.
By Lemma \ref{auto}, $\varphi$ is an automorphism on $T(2n+1,2m,2r)$ which preserves the faces. Since $\varphi(M_1)=M_2$, by a similar proof as that of
Theorem 4.4 in \cite{LW}, we know that $\varphi$ preserves the flip operation. Thus, $\varphi$ induces an isomorphism from $H_1$ to $H_2$.
\end{proof}

Now we have proved Theorem \ref{comnum} for $T(2n+1,2m,2r)$. In next subsection, we will prove Theorem \ref{comnum} for other non-bipartite quadriculated tori $T(2m,2n+1,r)$ and $T(2n,2m,2r-1)$.

\subsection{\small Proof for quadriculated tori $T(2m,2n+1,r)$ and $T(2n,2m,2r-1)$}
By Proposition \ref{drawing}, we will prove that $T(2m,2n+1,r)$ and $T(2n,2m,2r-1)$ can be reduced into some $T(2n'+1,2m',2r')$. By the result of $T(2n'+1,2m',2r')$, we can prove Theorem \ref{comnum}.

\begin{lem}\label{huafa} Let $n\geq 1$, $m\geq 2$, $1\leq r\leq 2n+1$ and $1\leq s\leq m$ be integers. Then $T(2m,2n+1,r)$ and $T(2n,2m,2s-1)$ can be reduced into some $T(n',m',r')$, where $n'\geq 1$ is odd, $m'\geq 2$ and $2 \leq r'\leq m'$ are even.
\end{lem}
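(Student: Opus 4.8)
The plan is to apply the dual construction $T^*$ exactly once to each of the two families and then simply read off the parities of the resulting parameters. Recall from the discussion preceding Proposition~\ref{drawing} that $T^*(n,m,r)=T\bigl((r,m),\frac{mn}{(r,m)},(\frac{m}{(r,m)}-k)n\bigr)$, where $0\le k\le \frac{m}{(r,m)}-1$ satisfies $(r,m)\equiv rk\pmod m$, and that this passage is an equivalence of quadriculated tori (it keeps horizontal/vertical edges horizontal/vertical and sends faces to faces). So it suffices to show that a single dualization sends each of $T(2m,2n+1,r)$ and $T(2n,2m,2s-1)$ to a torus $T(n',m',r')$ with $n'$ odd, $m'$ even, and $r'$ even satisfying $2\le r'\le m'$.

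First I would treat $T(2m,2n+1,r)$. Matching the formula for $T^*$ with $(n,m,r)$ replaced by $(2m,2n+1,r)$ gives $n'=(r,2n+1)$, $m'=\frac{2m(2n+1)}{(r,2n+1)}$ and $r'=\bigl(\frac{2n+1}{(r,2n+1)}-k\bigr)2m$. Since $(r,2n+1)$ divides the odd number $2n+1$, it is odd, so $n'$ is odd; the factor $2m$ surviving in both $m'$ and $r'$ makes them even. For the range, the defining bound $k\le \frac{2n+1}{(r,2n+1)}-1$ gives $\frac{2n+1}{(r,2n+1)}-k\ge 1$, whence $r'\ge 2m\ge 2$, while $k\ge 0$ gives $r'\le \frac{2n+1}{(r,2n+1)}\,2m=m'$. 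Thus $2\le r'\le m'$, as required.

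The second family $T(2n,2m,2s-1)$ is entirely parallel: replacing $(n,m,r)$ by $(2n,2m,2s-1)$ yields $n'=(2s-1,2m)$, $m'=\frac{4mn}{(2s-1,2m)}$ and $r'=\bigl(\frac{2m}{(2s-1,2m)}-k\bigr)2n$. Here $(2s-1,2m)$ divides the odd number $2s-1$ and is therefore odd, so $n'$ is odd, while the explicit factors $4$ and $2n$ force $m'$ and $r'$ to be even; the bounds $0\le k\le \frac{2m}{(2s-1,2m)}-1$ again give $2\le 2n\le r'\le m'$. One should also check the mild side conditions that let $T^*$ apply (the column counts $2n+1\ge 3$ and $2m\ge 4$ are $\ge 2$, and the torsions lie in $[1,\,\cdot\,]$), which hold under the stated hypotheses $n\ge 1$, $m\ge 2$, $1\le r\le 2n+1$, $1\le s\le m$.

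I do not expect any genuinely hard step here: the whole argument is parity-and-gcd bookkeeping layered on top of the already-established equivalence $T^*$. The only point requiring care is recognizing that a single dualization suffices—one must track that the even factor coming from the row count ($2m$ in the first case, and $2n$ together with the $2$ already present in $2m$ in the second) is inherited by both $m'$ and $r'$, and that the standard range $0\le k\le \frac{m}{(r,m)}-1$ for the exponent $k$ translates exactly into the desired $2\le r'\le m'$. No iteration of $T^*$ is needed.
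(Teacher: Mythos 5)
Your proposal is correct and follows essentially the same route as the paper: both apply the dual construction $T^*$ from Proposition \ref{drawing} exactly once to each family, read off $n'=(r,2n+1)$ (resp.\ $(2s-1,2m)$), $m'$ and $r'$ from the formula, and verify the parity and range claims by the same gcd-divides-an-odd-number and $0\le k\le \frac{m}{(r,m)}-1$ bookkeeping. No discrepancies to report.
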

\begin{proof}By Proposition \ref{drawing}, $T(2m,2n+1,r)$ can be reduced into $T(n', m',r')$, where $$n'=(r,2n+1), m'=\frac{2m(2n+1)}{(r,2n+1)}, r'=2m(\frac{2n+1}{(r,2n+1)}-k),$$ and  $0\leq k\leq \frac{2n+1}{(r,2n+1)}-1$ is an integer so that the equation $(r,2n+1)\equiv rk$ (mod $2n+1$) holds.
Then $n'\geq 1$ is odd, $m'\geq 2m\geq 4$ and $4\leq r'\leq m'$ are even.

By Proposition \ref{drawing}, $T(2n,2m,2s-1)$ can be reduced into $T(n',m',r')$, where $$n'=(2s-1,2m), m'=\frac{4mn}{(2s-1,2m)}, r'=2n(\frac{2m}{(2s-1,2m)}-k),$$ and $0\leq k\leq \frac{2m}{(2s-1,2m)}-1$ satisfies the equation $(2s-1,2m)\equiv (2s-1)k$ (mod $2m$). Then $n'\geq 1$ is odd, $m'\geq 2n\geq 2$ and $2\leq r'\leq m'$ are even.
\end{proof}

By Lemma \ref{huafa}, $T(2m,2n+1,r)$ and $T(2n,2m,2s-1)$ may be reduced into $T(1,2m',2r')$ which may be non-simple graph (containing a loop or multiple edge).
By a simple argument, we obtain that $T(1,2m,t)$ is simple if and only if $t$ is not equal to 1,
$2m-1$, $m$ and $2m$. Since $T(1,2m,2r)$ thins the quadriculated torus $T(3,2m,2r)$ and keeps its original structure, Theorem \ref{t21} holds for $r\neq m$ if $m$ is odd,
and $r\notin \{\frac{m}{2},m\}$ otherwise.
\begin{remark}\label{re12mr} Theorem \ref{t21} holds for simple quadriculated torus $T(1,2m,2r)$, that is $r\neq m$ if $m$ is odd, and $r\notin \{\frac{m}{2},m\}$ otherwise.
\end{remark}

Now we prove Theorem \ref{comnum} for quadriculated tori $T(2m,2n+1,r)$ and $T(2n,2m,2s-1)$. By Lemma \ref{huafa}, $T(2m,2n+1,r)$ and $T(2n,2m,2s-1)$ can be reduced into some  $T(n',m',r')$, where $n'\geq 1$ is odd, $m'\geq 2$ and $2 \leq r'\leq m'$ are even.
If $n'\geq 3$, then the flip graph of $T(n',m',r')$ consists of two isomorphic components by Theorem \ref{t21}.
Otherwise, $n'=1$ and $T(1,2m',2r')$ may contain loops or multiple edges.
Since $T(2m,2n+1,r)$ and $T(2n,2m,2s-1)$ is simple, so is $T(1,2m',2r')$. Combining Remark \ref{re12mr},
we obtain that Theorem \ref{comnum} holds.~~~~~~~~~~~~~~~~~~~~~~~~~~$\square$

\section{\normalsize Bipartite quadriculated tori}
In the last section, we have proved that flip graphs of non-bipartite quadriculated tori consist of two isomorphic components. Whereas, flip graphs of bipartite quadriculated tori have more complicated cases.

A \emph{ladder} \cite{T1} is a sequence of parallel dominoes side by side such that two neighboring dominoes always touch along one edge of the longer side, each domino in the ladder has two neighbors in it and theses two neighbors touch the domino at different squares.
See the first 8 tilings in Figure \ref{eg3732}, where each tiling consists of two ladders colored with yellow and blue, respectively. We can see that ladders are totally immune to flips.
In fact, Saldanha, Tomei, Casarin and Romualdo \cite{T1} used (co)homology to give a necessary and sufficient condition for two tilings in the same component of the flip graph.
\begin{thm}\rm{\cite{T1}}\label{bicomponent} Two domino tilings $t_1$ and $t_2$ of a quadriculated surface are in the same component of its flip graph if and only if $[t_1-t_2]=0$ and $t_1$ and $t_2$ have precisely the same ladders.
\end{thm}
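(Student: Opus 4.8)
The plan is to prove both directions by translating flips into moves on $1$-chains and reading off their classes in $H_1(\boldsymbol{T},\mathbb{Z})$. First I would fix a checkerboard $2$-coloring of the surface and orient each edge of a tiling from its black to its white endpoint, so that every tiling $t$ becomes a $1$-chain. Because $t_1$ and $t_2$ are perfect matchings that together cover every vertex, one checks $\partial(t_1-t_2)=0$, so the difference is a $1$-cycle and its class $[t_1-t_2]\in H_1(\boldsymbol{T},\mathbb{Z})$ is the flux. This is the invariant on which the whole argument turns.

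For necessity, observe that a single flip replaces two parallel dominoes inside one $2\times 2$ face by the two perpendicular dominoes in the same face; the resulting change in the $1$-chain is exactly the oriented boundary of that face, hence null-homologous. Thus every flip preserves $[t-t_\oplus]$, forcing $[t_1-t_2]=0$ whenever $t_1$ and $t_2$ lie in one component. For the ladder condition I would argue that no domino belonging to a ladder can ever be flipped, since its two ladder-neighbours (touching it at different squares) obstruct every $2\times 2$ square containing it, while a flip performed elsewhere leaves all ladders intact; hence the multiset of ladders is constant on each flip component.

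The substantive direction is sufficiency. Assume $[t_1-t_2]=0$ and that $t_1,t_2$ have the same ladders. Then $t_1-t_2=\partial\sigma$ for some integer $2$-chain $\sigma$, i.e. an integer multiplicity $\sigma(f)$ on each face $f$; such a $\sigma$ exists \emph{precisely} because the flux vanishes, and it is the discrete (multivalued) height difference between the two tilings, obtained by lifting the cohomological height section from $\boldsymbol{T}$ to the universal cover. I would then induct on $\sum_f|\sigma(f)|$: if $\sigma\neq 0$, locate a face $f$ at which $\sigma$ attains an extreme value relative to its neighbours, show that the configuration of $t_1$ around $f$ must then consist of two parallel dominoes forming a flippable square, perform that flip, and verify that it decreases $\sum_f|\sigma(f)|$ while moving $t_1$ one step toward $t_2$. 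Iterating carries $t_1$ to $t_2$ by flips.

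The main obstacle will be the extremal step: proving that at a face where $\sigma$ is extremal the tiling is genuinely flippable, and identifying the exact obstruction when it is not. This is where the hypothesis on ladders is consumed—a ladder is precisely a configuration that is simultaneously extremal for the height $\sigma$ and frozen under flips, so after excising the common ladders of $t_1$ and $t_2$ the extremal face in the remaining region is always flippable. Making this rigorous requires the local-coefficient bookkeeping on the non-simply-connected surface, so that the height section and its extrema are defined globally rather than only on the universal cover, and I expect the careful verification that excising the shared ladders removes all frozen extrema to be the delicate part of the proof.
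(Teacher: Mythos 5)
First, note that the paper does not prove this statement: it is quoted verbatim from Saldanha--Tomei--Casarin--Romualdo \cite{T1}, so there is no in-paper proof to compare against; I am judging your proposal against what the theorem actually requires. Your necessity direction is sound: a flip alters the $1$-chain by $\pm\partial f$, hence preserves $[t-t_\oplus]$, and the two offset ladder-neighbours of any ladder domino block both of its potential flip partners, so ladders are invariant under flips. The setup does, however, quietly assume a global black/white $2$-colouring; the theorem is stated for an arbitrary quadriculated surface, where no such colouring need exist and $[t_1-t_2]$ must be taken in homology with local coefficients. You defer this to a closing remark instead of building it into the definition of the invariant, so as written the argument only covers the bipartite case.

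The genuine gap is in the sufficiency direction, at exactly the step you flag. Writing $t_1-t_2=\partial\sigma$ and inducting on $\sum_f|\sigma(f)|$ is a reasonable frame (modulo normalising $\sigma$, which on a torus is only determined up to adding the fundamental class), but the assertion that at a face where $\sigma$ is extremal the tiling $t_1$ presents a flippable pair is false as stated: it holds only when the extremum is strict, i.e.\ all four neighbouring faces are strictly lower, which forces two opposite edges of $f$ into $t_1$ and the other two into $t_2$. When the maximum is attained on a plateau $S=\sigma^{-1}(M)$ containing many faces, every edge on the boundary of $S$ lies in $t_1\triangle t_2$ but the interior of $S$ is unconstrained, and $S$ can be an essential annulus on which $t_1$ is completely frozen. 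Proving that such a frozen plateau is necessarily a union of ladders common to $t_1$ and $t_2$, and that after discarding the shared ladders every remaining plateau contains a flippable face, is not a residual verification left over from your argument; it is the entire content of the theorem and the reason the ladder hypothesis appears in its statement. As written, the proposal restates the theorem at this point rather than proving it.
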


Furthermore, Saldanha et al. \cite{T1} obtained that components of flip graphs of bipartite quadriculated tori are homotopically equivalent to points or circles.

With the computer, we obtain the components of flip graphs for bipartite quadriculated tori $T(3,4,1)$, $T(4,4,2)$ and $T(4,4,4)$.

\begin{figure}[h]
\centering
\includegraphics[height=5cm,width=16.5cm]{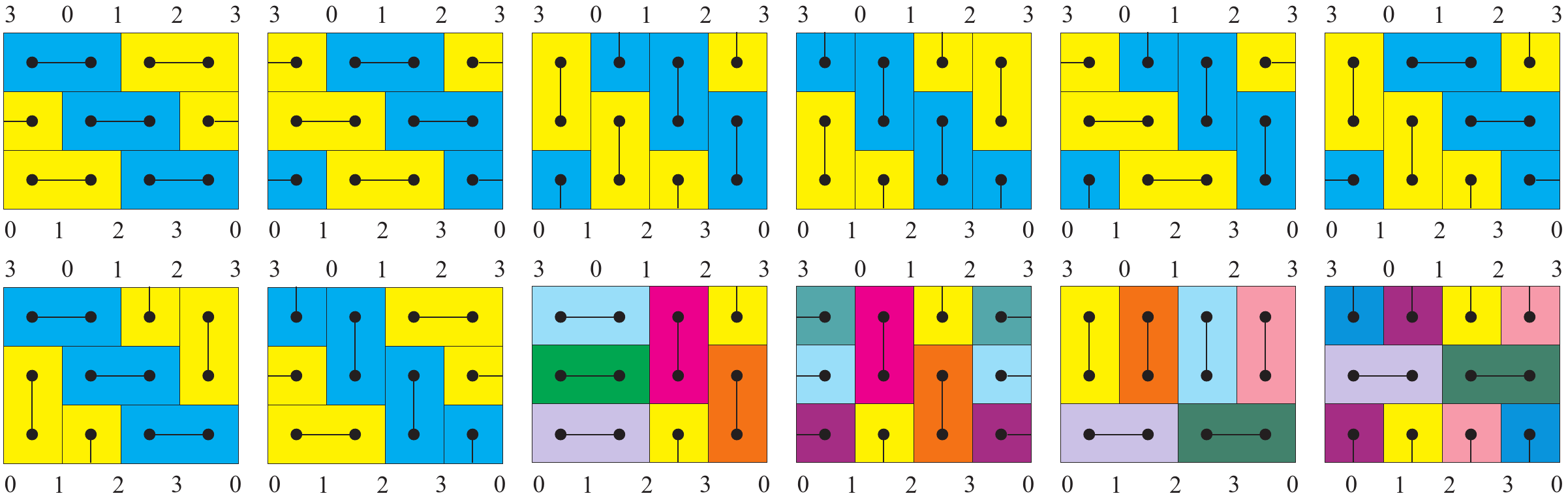}
\caption{\label{eg3732}Twelve tilings which respectively are from 12 different components of $\mathcal{T}(T(3,4,1))$.}
\end{figure}
\begin{ex} \label{ex341} $\mathcal{T}(T(3,4,1))$ contains 80 vertices, which form 8 singletons and 4 non-singleton components shown as Figure \ref{eg3732}, where the first 8 tilings themselves form singletons, and the remaining tilings are from 4 different non-singleton components.
\end{ex}

\begin{ex} \label{ex442} $\mathcal{T}(T(4,4,2))$ contains 260 vertices and 11 components shown as Figure \ref{eg4442}, where the first 4 tilings themselves form singletons, and the remaining tilings are from 7 different non-singleton components.
\begin{figure}[h]
\centering
\includegraphics[height=6.2cm,width=16cm]{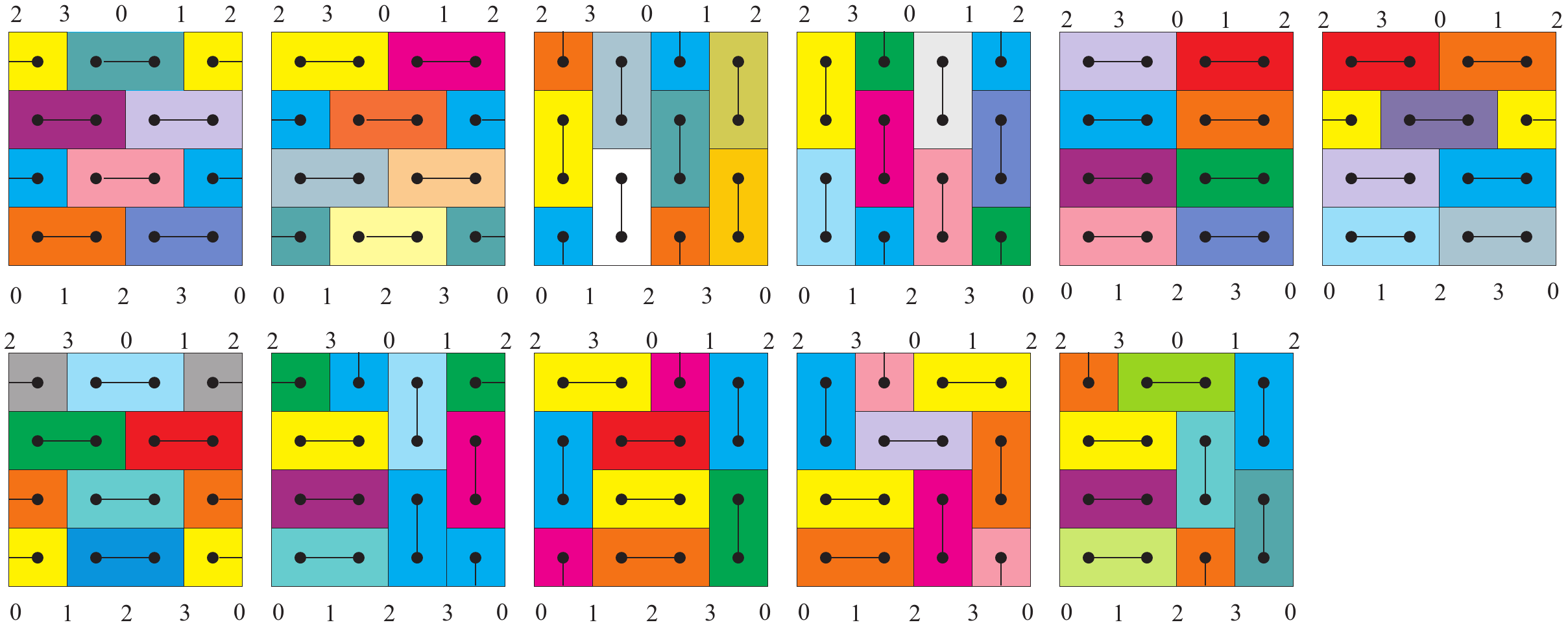}
\caption{\label{eg4442}Eleven tilings which respectively are from 11 different components  of $\mathcal{T}(T(4,4,2))$.}
\end{figure}
\end{ex}

\begin{ex} \label{ex444} $\mathcal{T}(T(4,4,4))$ contains 272 vertices and 17 components shown as Figure \ref{eg4442}, where the first 12 tilings themselves form singletons, and the remaining tilings are from 5 different non-singleton components.
\begin{figure}[h]
\centering
\includegraphics[height=7.3cm,width=15cm]{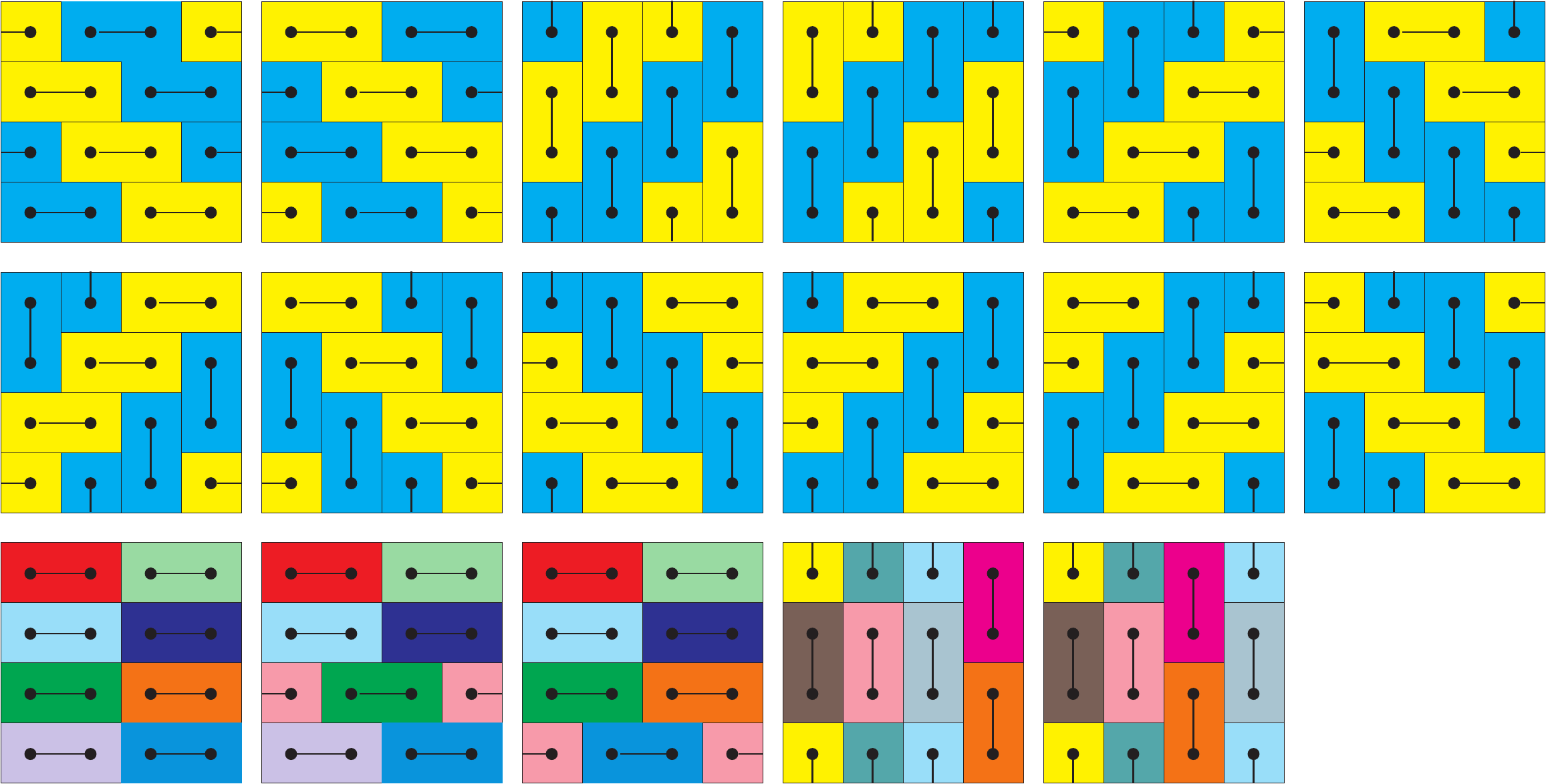}
\caption{\label{eg4444} Seventeen tilings which respectively are from 17 different components  of $\mathcal{T}(T(4,4,4))$. }
\end{figure}
\end{ex}

By the above three examples, we find that their flip graphs contain at least 4 singletons and 4 non-singleton components. In the sequel, we will discuss singletons and non-singleton components, respectively. For convenience, we define some sets of edges for $T(n,m,r)$. For $i\in Z_n$, let $X_{i}=\{v_{i,2k}v_{i,2k+1}|k\in Z_{\frac{m}{2}}\}$ and $Y_{i}=\{v_{i,2k+1}v_{i,2k+2}|k\in Z_{\frac{m}{2}}\}$. For $j\in Z_m$, let $F_{j}=\{v_{2k+1,j}v_{2k+2,j}|k\in Z_{\lceil\frac{n-2}{2}\rceil-1}\}$,  $F'_{j}=F_j\cup \{v_{n-1,m-r+j}v_{0,j}\}$ and $W_{j}=\{v_{2k,j}v_{2k+1,j}|k\in Z_{\lfloor\frac{n}{2}\rfloor-1}\}$.
\begin{thm}\label{singleton} Flip graphs of bipartite quadriculated tori contain at least four singletons.
\end{thm}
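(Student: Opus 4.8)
The plan is to exhibit four pairwise distinct perfect matchings of an arbitrary bipartite $T(n,m,r)$ (so that, by Theorem \ref{bipartite}, both $m$ and $n+r$ are even) each of which admits no flip at all; any such matching is necessarily an isolated vertex of the flip graph, i.e. a singleton. The observation driving everything is that a unit face is flippable under a matching $M$ exactly when $M$ contains both of its horizontal boundary edges or both of its vertical boundary edges. Hence it suffices to produce four matchings in which no face has both its horizontal edges, nor both its vertical edges, in $M$. I would split them into two \emph{horizontal brick walls}, built from the sets $X_i,Y_i$, and two \emph{vertical brick walls}, built from the sets $W_j,F_j,F'_j$.

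For the horizontal walls, since $m$ is even each row is tiled either by $X_i$ or by $Y_i$, so I set $M^h_1=\bigcup_{i\ \mathrm{even}}X_i\cup\bigcup_{i\ \mathrm{odd}}Y_i$ and let $M^h_2$ be the matching with the two phases exchanged. Both consist solely of horizontal dominoes, so no face can carry two vertical edges and no vertical flip exists. Consecutive rows are given opposite phases, which kills every horizontal flip inside the chessboard; the only remaining faces are those of the seam identifying row $n-1$ with row $0$ through the torsion $r$. There a flip is present precisely when the phase offset between the two rows agrees in parity with $r$, and a direct count of the alternating phases shows this offset has the parity of $n-1$; thus the seam is flip-free exactly when $n+r$ is even, which is the bipartiteness hypothesis. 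This parity bookkeeping at the seam is the first place where the hypothesis is genuinely used.

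For the vertical walls I use that all vertical edges of $T(n,m,r)$ form $(r,m)$ $I$-cycles, each of length $\frac{nm}{(r,m)}$ by Corollary \ref{lem1}(\romannumeral1); since every $I$-cycle is a cycle of the bipartite graph $T(n,m,r)$, its length is even and it has exactly two alternating perfect matchings. The sets $W_j,F_j$ realize the two phases along a column, and $F'_j$ supplies the wrap edge $v_{n-1,m-r+j}v_{0,j}$ that closes an $I$-cycle when $n$ is odd. I would then assemble $M^v_1,M^v_2$ by choosing opposite phases on consecutive $I$-cycles; having no horizontal edges, these admit no horizontal flip, and the opposite-phase choice removes every vertical flip on the squares joining neighbouring $I$-cycles. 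The delicate point, and the main obstacle, is the global consistency of this alternating phase assignment around the cyclically arranged $I$-cycles, especially across the seam $I$-cycle, where the rungs are twisted by the shift $nk$ of Lemma \ref{prop}(\romannumeral2); checking that this twist does not obstruct an out-of-phase closure is again a parity computation, now controlled by $m$ being even. Alternatively, since $T^{**}(n,m,r)=T(n,m,r)$ by Proposition \ref{drawing} and the dual redrawing interchanges the roles of the horizontal cycles and the $I$-cycles, the two vertical walls can be read off as the two horizontal walls of the dual torus $T^*(n,m,r)$, which is again bipartite; this route reuses the computation of the previous paragraph and sidesteps the twist entirely.

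Finally I would verify that the four matchings are distinct: $M^h_1,M^h_2$ use only horizontal edges and differ in every row, while $M^v_1,M^v_2$ use only vertical edges and differ on every $I$-cycle, and no all-horizontal matching can coincide with an all-vertical one. This yields at least four singletons. The only situations requiring separate attention are the degenerate small tori (for instance $m=2$, where $X_i=Y_i$, or $n=1$), which are finite in number and can be inspected directly; indeed the computed Examples \ref{ex341}, \ref{ex442} and \ref{ex444} already display at least four singletons.
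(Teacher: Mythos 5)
Your proposal is correct and follows essentially the same route as the paper: the paper's proof exhibits exactly these four brick-wall tilings, $t^1,t^2$ built by alternating $X_i$ and $Y_i$ over the rows and $t^3,t^4$ by alternating $F'_j$ and $W_j$ over the columns, and declares them flip-free. The only difference is that the paper dismisses the verification with ``obviously,'' whereas you supply the parity check at the torsion seam (where bipartiteness, i.e.\ $n+r$ even, is genuinely used) and the duality shortcut via $T^*(n,m,r)$ for the vertical walls.
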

\begin{proof} Let $T(n,m,r)$ be a bipartite quadriculated torus, where $n,m\geq 3$ and $1\leq  r\leq m$. By Theorem \ref{bipartite}, both of $m$ and $n+r$ are even.
Let $t^{1}=X_0\cup Y_1\cup X_2\cup Y_3\cup \cdots \cup \sigma_{n-1}$, where if $n$ is odd then $\sigma=X$, otherwise, $\sigma=Y$.
Let $t^{2}=Y_0\cup X_1\cup Y_2\cup X_3\cup \cdots \cup \sigma_{n-1}$, where if $n$ is odd then $\sigma=Y$, otherwise, $\sigma=X$. Let  $t^{3}=F'_0\cup W_1\cup F'_2\cup W_3\cup \cdots \cup F'_{m-2}\cup W_{m-1}$ and $t^{4}=W_0\cup F'_1\cup W_2\cup F'_3\cup \cdots \cup W_{m-2}\cup F'_{m-1}$. Obviously, there is no flips can be done on above four flips, and they form singletons themselves of the flip graph.
\end{proof}
By Example \ref{ex442}, the lower bound in Theorem \ref{singleton} is tight.
For $T(2n,2n,2n)$, we can obtain a better lower bound for the number of singletons of the flip graph.
\begin{prop}\label{non-singleton3} For $n\geq 2$, the flip graph of $T(2n,2n,2n)$ have at least $4+2^{n+1}$ singletons.
\end{prop}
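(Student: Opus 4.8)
The plan is to exhibit, besides the four brick--wall tilings $t^{1},t^{2},t^{3},t^{4}$ produced in Theorem \ref{singleton}, a large family of flip--rigid ``diagonal staircase'' tilings. Throughout I use that $T(2n,2n,2n)=T(2n,2n,0)$ is the ordinary $2n\times 2n$ torus, with vertices $v_{i,j}$, $i,j\in Z_{2n}$, and that $\psi(v_{i,j})=v_{i+1,j+1}$ is a face--preserving automorphism (the same argument as in Lemma \ref{auto}). I group the squares into $2n$ \emph{diagonals} $D_c=\{v_{i,i+c}\mid i\in Z_{2n}\}$, $c\in Z_{2n}$, which are precisely the orbits of $\psi$. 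A horizontal domino joins $D_c$ to $D_{c+1}$ and a vertical domino joins $D_c$ to $D_{c-1}$, so \emph{every} domino joins two cyclically consecutive diagonals; hence the diagonals, with this adjacency, behave like the vertex set of a $2n$-cycle.

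First I would analyse the $\psi$-invariant tilings. Since $\psi$ acts transitively on each diagonal, a $\psi$-invariant tiling covers every diagonal by a single ladder--orbit that pairs it with one neighbouring diagonal; it therefore induces a perfect matching of the $2n$-cycle, of which there are exactly two, namely $\{D_0D_1,D_2D_3,\dots\}$ and $\{D_1D_2,D_3D_4,\dots\}$. Conversely, given such a matching, each of its $n$ pairs $\{D_c,D_{c+1}\}$ may be filled \emph{independently} either by the horizontal ladder $\{v_{i,i+c}v_{i,i+c+1}\mid i\in Z_{2n}\}$ or by the vertical ladder $\{v_{i,i+c+1}v_{i+1,i+c+1}\mid i\in Z_{2n}\}$, and each of the resulting $2\cdot 2^{n}=2^{n+1}$ choices is a genuine tiling. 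I would then show each such tiling is a singleton: a flip needs a unit square bounded by two parallel dominoes, but two horizontal (resp. two vertical) dominoes filling a square would force a single diagonal $D_c$ to be matched simultaneously to $D_{c+1}$ and to $D_{c-1}$, which is impossible in a matching of the cycle. Identifying the four ``pure'' data (all--horizontal or all--vertical over the two matchings) with $t^{1},\dots,t^{4}$, this construction yields $2^{n+1}-4$ genuinely \emph{mixed} singletons invariant under $\psi$.

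To reach the stated bound I would run the identical construction for the anti--diagonal automorphism $\psi'(v_{i,j})=v_{i+1,j-1}$, whose orbits are the anti--diagonals; this produces a further $2^{n+1}-4$ mixed singletons. Together with the four brick walls this gives $4+2\,(2^{n+1}-4)=2^{n+2}-4$ tilings, and $2^{n+2}-4\ge 4+2^{n+1}$ exactly when $n\ge 2$, which is the desired conclusion.

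The main obstacle is the bookkeeping of \emph{distinctness}. The map (matching, orientation vector)$\mapsto$tiling is injective within one diagonal direction, since the pairing is recovered from which diagonals share dominoes and the per--pair orientation is recovered from the dominoes themselves; so each family contains $2^{n+1}$ distinct tilings. What requires care is that the mixed $\psi$-family and the mixed $\psi'$-family are disjoint. For this it suffices to show that a genuinely mixed $\psi$-invariant tiling cannot also be $\psi'$-invariant: if it were, it would be invariant under $\langle\psi,\psi'\rangle$, which contains $v_{i,j}\mapsto v_{i,j+2}$ and $v_{i,j}\mapsto v_{i+2,j}$, and I would argue that invariance under $\psi$ together with both translations by $2$ forces the tiling to be one of the four brick walls, contradicting mixedness. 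Once this disjointness (and the automatic exclusion of brick walls among the mixed tilings) is established, the flip--rigidity verification above is routine, as it only uses the matching structure on the diagonals.
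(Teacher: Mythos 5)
Your construction is correct, and it takes a genuinely different route from the paper. The paper's proof bootstraps from the computer-verified Example \ref{ex444}: it takes the $8$ non-brick-wall singletons of $T(4,4,4)$, each a union of two ladders, and recursively extends them to $T(2n,2n,2n)$ by lengthening the two ladders and filling the one remaining ladder in two ways, yielding $8\cdot 2^{n-2}=2^{n+1}$ singletons to add to the four from Theorem \ref{singleton}. Your argument is instead direct and self-contained: you build the translation-invariant tilings explicitly from a perfect matching of the cycle of diagonals together with an independent horizontal/vertical choice on each matched pair, and rigidity follows because a flippable face would force one diagonal to be matched to both of its neighbours. All the steps you flag as needing verification do go through: the bipartite graph between $D_c$ and $D_{c+1}$ is a single $4n$-cycle whose only two perfect matchings are your two ladders; the four pure choices are exactly $t^1,\dots,t^4$; and invariance under $v_{i,j}\mapsto v_{i,j+2}$ cyclically shifts the orientation vector while preserving the diagonal matching, so joint $\psi$- and $\psi'$-invariance does force a brick wall, giving the disjointness of the two mixed families. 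The one spot where ``identical'' is a slight overstatement is the anti-diagonal case: there \emph{both} horizontal and vertical dominoes raise the anti-diagonal index by one (rather than going in opposite directions), so the rigidity argument reads differently --- the middle anti-diagonal of a flippable face would have one vertex matched upward and one downward in index; alternatively one can just conjugate the whole diagonal analysis by the face-preserving reflection $v_{i,j}\mapsto v_{i,-j}$. What your approach buys is independence from the computer computation and, in fact, a sharper bound: you obtain $2^{n+2}-4$ singletons, which equals $12=4+2^{n+1}$ at $n=2$ but strictly exceeds the paper's $4+2^{n+1}$ for all $n\geq 3$.
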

\begin{proof}By Example \ref{ex444}, the flip graph of $T(4,4,4)$ has 12 singletons which correspond to tilings constructed from two ladders (colored with yellow and blue in Figure \ref{eg4444}, respectively). The first 4 tilings in Figure \ref{eg4444} correspond to the four singletons defined in the proof of Theorem \ref{singleton}. In the following, we will prove that starting with the fifth singleton in Figure \ref{eg4444}, we will produce two singletons of $T(6,6,6)$.
\begin{figure}[h]
\centering
\includegraphics[height=4.2cm,width=16cm]{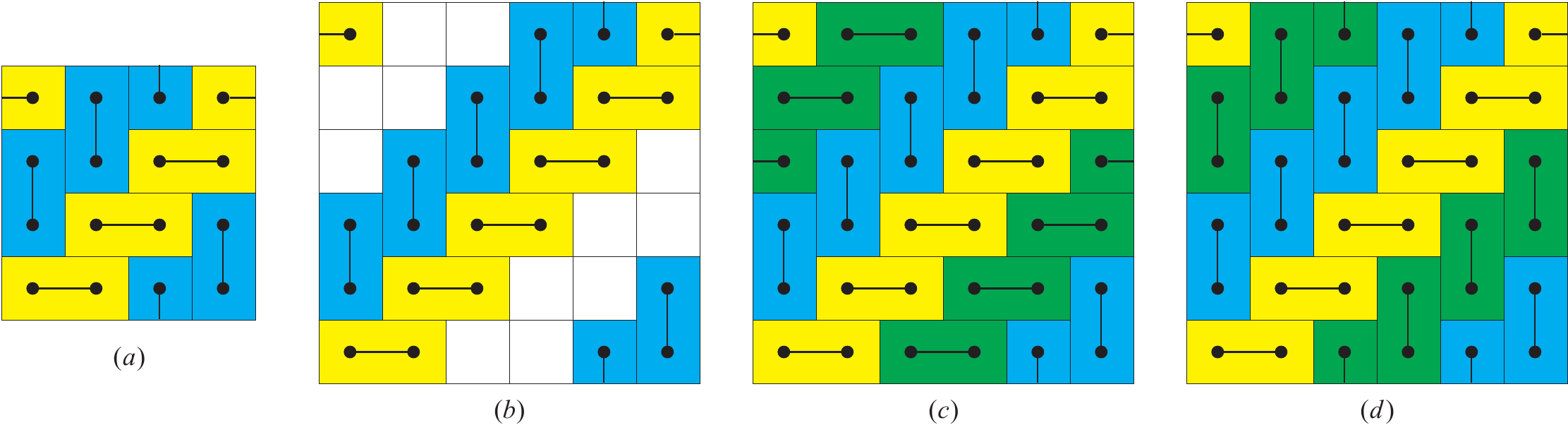}
\caption{\label{eg666} Generalization of a singleton of $T(4,4,4)$ to two singletons of $T(6,6,6)$.}
\end{figure}
First we generalize the two ladders consisting of the singleton (shown in Figure \ref{eg666}(a)) to two ladders of $T(6,6,6)$ (shown in Figure \ref{eg666}(b)), then there remain a ladder of $T(6,6,6)$ which can be colored with green by two methods (shown as (c) and (d) in Figure \ref{eg666}).

For each singleton shown in Figure \ref{eg4444} except for the first 4 ones, we can get two singletons of $T(6,6,6)$. Thus, we can obtain $8\times 2$ singletons of $T(6,6,6)$, and $8\times 2^2$ singletons of $T(8,8,8)$. Proceeding like this, we will obtain $8\times 2^{n-2}$ singletons of $T(2n,2n,2n)$. Adding the 4 singletons $t^1$, $t^2$, $t^3$ and $t^4$ in the proof of Theorem \ref{singleton}, we can obtain at least $4+2^{n+1}$ singletons.
\end{proof}

To consider non-singleton components of the flip graph of bipartite quadriculated tori $T(n,m,r)$ where $m$ and $n+r$ are even, we associate $T(n,m,r)$  a \emph{CW-complex} $\boldsymbol{T}$ \cite{T1}: The 0-cells are vertices of $T(n,m,r)$, the 1-cells are edges between vertices, and the 2-cells are unit squares. We know that the 1-dimensional homological group $H_1(\boldsymbol{T},Z)=Z^2$.

Given a 2-coloring of $T(n,m,r)$, each edge (or domino) as a 1-cell in $\boldsymbol{T}$ is oriented from black to white, and a perfect matching (or domino tiling) corresponds to a 1-chain in $C_1(\boldsymbol{T},Z)$. For domino tilings $t_1$ and $t_2$, the difference $t_1-t_2$ consists of disjoint directed cycles in $T(n,m,r)$, and is a closed 1-chain in $C_1(\boldsymbol{T},Z)$ (i.e. $\partial_1(t_1-t_2)=0$). The corresponding homology class $[t_1-t_2]$ is in $H_1(\boldsymbol{T},Z)$. For example,  we give two tilings $t_1$, $t_2$ in Figure \ref{homologyclass} (a) and (b), and the corresponding homology class $[t_1-t_2]$ in Figure \ref{homologyclass}(c).
\begin{figure}[h]
\centering
\includegraphics[height=2.8cm,width=10cm]{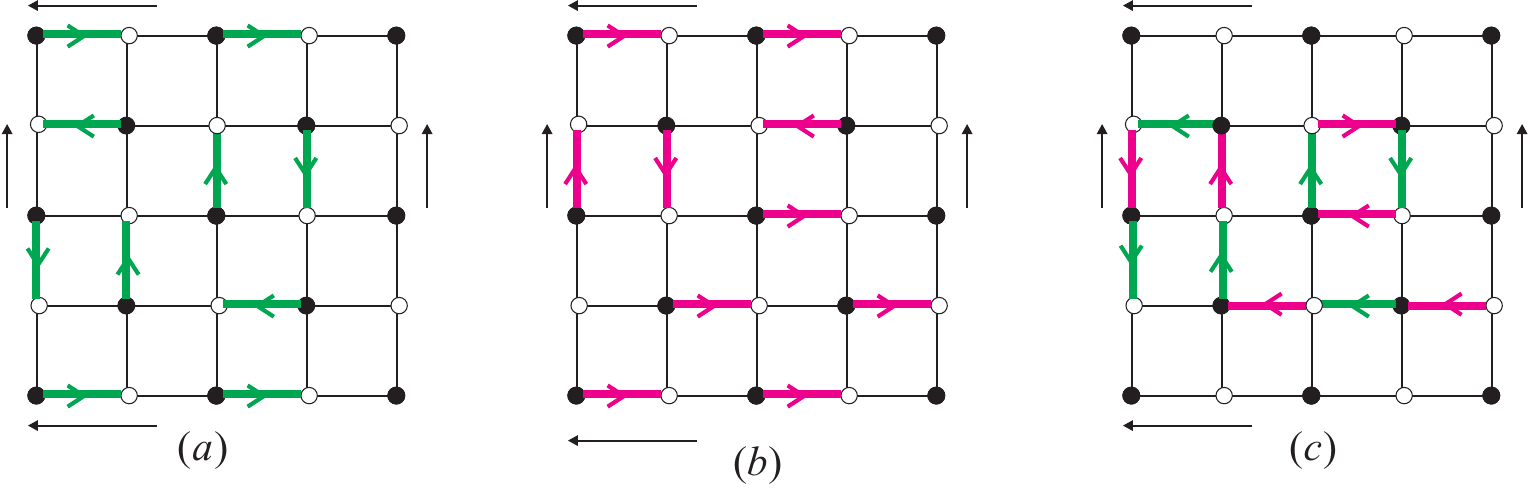}
\caption{\label{homologyclass} Two tilings $t_1$, $t_2$ and the corresponding homology class $[t_1-t_2]$.}
\end{figure}

Freire et al. \cite{LW17} defined and developed the concept of flux via homology theory.
Given a fixed tiling $t_\oplus$ as a base tiling, define \emph{flux} of a tiling $t$ as $$\text{Flux}(t)=[t-t_\oplus] \in H_1(\boldsymbol{T},Z).$$
By Theorem \ref{bicomponent}, we obtain a sufficient condition for two tilings in different components.
\begin{cor} \label{flipconnected}For tilings $t_1$ and $t_2$ of $T$,  if $\text{Flux}(t_1)\neq \text{Flux}(t_2)$, then $t_1$ and $t_2$ lie in different components of the flip graph of $T(n,m,r)$.
\end{cor}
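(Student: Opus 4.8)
The plan is to derive this immediately from Theorem \ref{bicomponent} by exploiting the additivity of the homology class in its $1$-chain argument, so that no genuine computation is required. First I would record the key algebraic fact: the passage from a closed $1$-chain to its class in $H_1(\boldsymbol{T},Z)$ is a group homomorphism, and $\partial_1$ is linear. Since the preceding paragraph already verifies that $t_1-t_2$ is a closed $1$-chain for any two tilings (indeed $\partial_1(t_1-t_2)=0$), the same holds for $t_1-t_\oplus$ and $t_2-t_\oplus$ with respect to the base tiling $t_\oplus$. Hence all three homology classes are well defined and live in the same group $H_1(\boldsymbol{T},Z)$, and we may subtract them.

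Next I would establish the bookkeeping identity that ties the flux difference to the relative class appearing in Theorem \ref{bicomponent}. Rewriting $t_1-t_2=(t_1-t_\oplus)-(t_2-t_\oplus)$ at the level of $1$-chains and applying the homomorphism yields
\[
[t_1-t_2]=[t_1-t_\oplus]-[t_2-t_\oplus]=\text{Flux}(t_1)-\text{Flux}(t_2).
\]
This identity is the whole substance of the corollary: it converts a statement about the base-point-dependent invariant $\text{Flux}$ into the base-point-free relative class $[t_1-t_2]$ on which Theorem \ref{bicomponent} operates.

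Finally I would invoke the necessity direction of Theorem \ref{bicomponent}: if $t_1$ and $t_2$ lie in the same component of the flip graph, then in particular $[t_1-t_2]=0$ (we will not even need the coincidence of ladders). Arguing by contrapositive, assume $\text{Flux}(t_1)\neq\text{Flux}(t_2)$. By the displayed identity $[t_1-t_2]=\text{Flux}(t_1)-\text{Flux}(t_2)\neq 0$, so the necessary condition fails and $t_1,t_2$ must lie in different components. The only point demanding any care — the sole potential obstacle — is checking that the three $1$-chains are closed so that the subtraction genuinely happens inside $H_1(\boldsymbol{T},Z)$; but this is precisely what was established just before the statement, so the argument reduces to the linearity recorded above.
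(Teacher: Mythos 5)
Your proposal is correct and follows essentially the same route as the paper: both derive $[t_1-t_2]=\text{Flux}(t_1)-\text{Flux}(t_2)\neq 0$ from the linearity of the homology class and then apply the necessity direction of Theorem \ref{bicomponent}. Your extra remarks on the closedness of the $1$-chains merely make explicit what the paper takes for granted.
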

\begin{proof} By properties of the homology class, we have $$[t_1-t_2]=[t_1-t_\oplus]+[t_\oplus-t_2]=[t_1-t_\oplus]-[t_2-t_\oplus]=\text{Flux}(t_1)- \text{Flux}(t_2)\neq 0,$$ which implies that $t_1$ and $t_2$ lie in different components of the flip graph by Theorem \ref{bicomponent}.
\end{proof}

This result together with some special tilings gives an approach to amount to the components of flip graphs for bipartite quadriculated tori.

For a bipartite quadriculated tori $T(n,m,r)$ where $m$ and $n+r$ are even, we define two closed 1-chains: $$z_0=v_{n-1,0}v_{n-1,1}+v_{n-1,1}v_{n-1,2}+\cdots +v_{n-1,m-1}v_{n-1,0},$$ $$z'_0=v_{n-1,0}v_{n-2,0}+v_{n-2,0}v_{n-3,0}+\cdots +v_{0,0}v_{n-1,m-r}+v_{n-1,m-r}v_{n-1,m-r+1}+\cdots +v_{n-1,m-1}v_{n-1,0}.$$ Then $H_1(T,Z)$ is a free abelian group based on $[z_0]$ and $[z'_0]$, which is represented as $[z_0]=(1,0)$ and $[z'_0]=(0,1)$. For $i\in Z_{n}$, let $$z_i=v_{i,0}v_{i,1}+v_{i,1}v_{i,2}+\cdots+ v_{i,m-1}v_{i,0}.$$ By a simple argument, we obtain that $[z_i]-[z_0]=0$.

For example, the last five tilings in Figure \ref{eg4444} in turn are denoted by $t_1=t_\oplus$, $t_2$, \dots, $t_5$ respectively. Then $\text{Flux}(t_1)=(0,0)$, $\text{Flux}(t_2)=(1,0)$, $\text{Flux}(t_3)=(-1,0)$, $\text{Flux}(t_4)=(0,-1)$ and $\text{Flux}(t_5)=(0,1)$. By Corollary \ref{flipconnected}, such five tilings belong to different components of $\mathcal{T}(T(4,4,4))$.

Let $t_\oplus=E_0\cup E_2\cup \dots \cup E_{m-2}$ be a base tiling of $T(n,m,r)$. Then we can obtain the following result.
\begin{thm}\label{non-singleton} For $n\geq 3$, $m\geq 4$ and $1\leq r\leq m$, let $T(n,m,r)$ be a bipartite quadriculated torus where $m$ and $n+r$ are even. Then $\mathcal{T}(T(n,m,r))$ has at least $2\lfloor\frac{n}{2}\rfloor-1$ non-singleton components.
\end{thm}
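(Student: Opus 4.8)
The plan is to produce $2\lfloor\frac{n}{2}\rfloor-1$ tilings of $T(n,m,r)$, each admitting at least one flip (so each lies in a \emph{non}-singleton component of $\mathcal{T}(T(n,m,r))$), and to arrange that their fluxes are pairwise distinct. Since flux is constant on each component, Corollary \ref{flipconnected} will then guarantee that these tilings occupy $2\lfloor\frac{n}{2}\rfloor-1$ pairwise distinct components, all of which are non-singleton, which is exactly the assertion.

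All the tilings I would use consist only of horizontal dominoes. The point is that each row $i$ is a cycle of even length $m$, so it has exactly the two perfect matchings $X_i$ and $Y_i$; a horizontal tiling is therefore nothing but an independent choice of $X_i$ or $Y_i$ in every row, and the base tiling is $t_\oplus=\bigcup_{i\in Z_n}X_i$. Starting from $t_\oplus$ and replacing $X_i$ by $Y_i$ in a single row $i$ changes the difference $1$-chain $t-t_\oplus$ by exactly $\pm z_i$, the sign being governed by the parity of $i$ once the black/white coloring is fixed and each edge oriented from black to white: rows of the same index-parity contribute the same sign and rows of opposite parity contribute opposite signs. Because $[z_i]=(1,0)$ for every $i$, switching the matchings of $k$ odd-indexed rows yields $\mathrm{Flux}=(\sigma k,0)$ and switching $k$ even-indexed rows yields $\mathrm{Flux}=(-\sigma k,0)$, for a single fixed $\sigma\in\{+1,-1\}$.

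Concretely, for $0\le k\le\lfloor\frac{n}{2}\rfloor-1$ let $s_k$ be obtained from $t_\oplus$ by switching rows $1,3,\dots,2k-1$ to their odd matchings, and for $1\le k\le\lfloor\frac{n}{2}\rfloor-1$ let $s_{-k}$ be obtained by switching rows $0,2,\dots,2k-2$. By the computation above these have fluxes $(\sigma k,0)$ and $(-\sigma k,0)$, so the first coordinates run through all integers from $-(\lfloor\frac{n}{2}\rfloor-1)$ to $\lfloor\frac{n}{2}\rfloor-1$, giving $2\lfloor\frac{n}{2}\rfloor-1$ tilings with pairwise distinct fluxes. It then remains to check that each $s_k$ admits a flip. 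For $k\ge0$ the rows $2k,2k+1,\dots,n-1$ all retain their even matchings, and since $k\le\lfloor\frac{n}{2}\rfloor-1$ we have $2k\le n-2$, so the two \emph{consecutive, interior} rows $2k$ and $2k+1$ both carry $X_i$; the horizontal dominoes $v_{2k,0}v_{2k,1}$ and $v_{2k+1,0}v_{2k+1,1}$ then bound a unit square and can be flipped. The tilings $s_{-k}$ are treated symmetrically using the consecutive rows $2k-1$ and $2k$. Hence every $s_k$ is flippable, and the theorem follows from Corollary \ref{flipconnected}.

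The step that needs the most care is the flux bookkeeping: one must fix the $2$-coloring, orient each edge from black to white, and verify that switching a row from $X_i$ to $Y_i$ contributes precisely $+z_i$ or $-z_i$ according to the parity of $i$, so that the $2\lfloor\frac{n}{2}\rfloor-1$ values $\sigma k$ and $-\sigma k$ are genuinely distinct. The only other subtlety is to ensure that the two equal-phase consecutive rows used for each flip are interior, i.e.\ do not straddle the seam that identifies row $n-1$ with row $0$ under the torsion $r$; this is exactly why I switch an initial block of rows of a single parity, which keeps the flip away from the torsion-dependent behaviour at the seam and guarantees a bona fide interior flip.
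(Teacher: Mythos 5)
Your proposal is correct and is essentially the paper's own argument: the same family of horizontal-only tilings obtained from $t_\oplus$ by switching an initial block of odd-indexed (resp.\ even-indexed) rows from $X_i$ to $Y_i$, the same row-by-row flux computation showing the fluxes are $(\pm k,0)$, and the same appeal to Corollary \ref{flipconnected}. Your explicit check that each tiling admits a flip (via two consecutive unswitched rows away from the seam) is a small point the paper leaves implicit, but it does not change the route.
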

\begin{proof}Since $t_\oplus$ is a tiling of $T(n,m,r)$ adjacent to at least 4 other tilings, we obtain that $\mathcal{T}(T(n,m,r))$ has at least one non-singleton component. Next we assume that $n\geq 4$.
For $k\in Z_{\lfloor\frac{n}{2}\rfloor-1}$, let $$t_k=(\bigcup_{j\in Z_{k+1}}Y_{2j+1})\,\bigcup\,(\bigcup_{i\in I}X_{i}) \text{ where } I=Z_{n}\setminus \{2j+1|j\in Z_{k+1}\}, \text{ and}$$
$$t'_k=(\bigcup_{j\in Z_{k+1}}Y_{2j})\,\bigcup\,(\bigcup_{i\in I}X_{i}) \text{ where } I=Z_{n}\setminus \{2j|j\in Z_{k+1}\}$$ be $2\lfloor\frac{n}{2}\rfloor-2$ tilings of $T(n,m,r)$ consisting of horizontal edges. Obviously, $t'_k$ is obtained by translating $t_k$ one square up for $k\in Z_{\lfloor\frac{n}{2}\rfloor-1}$. For example, for $T(7,4,1)$, tilings $t_0$ and $t_1$ are shown as (a) and (b), and $t'_0$ and $t'_1$ are shown as (c) and (d) in Figure \ref{obsess}.
\begin{figure}[h]
\centering
\includegraphics[height=4cm,width=12cm]{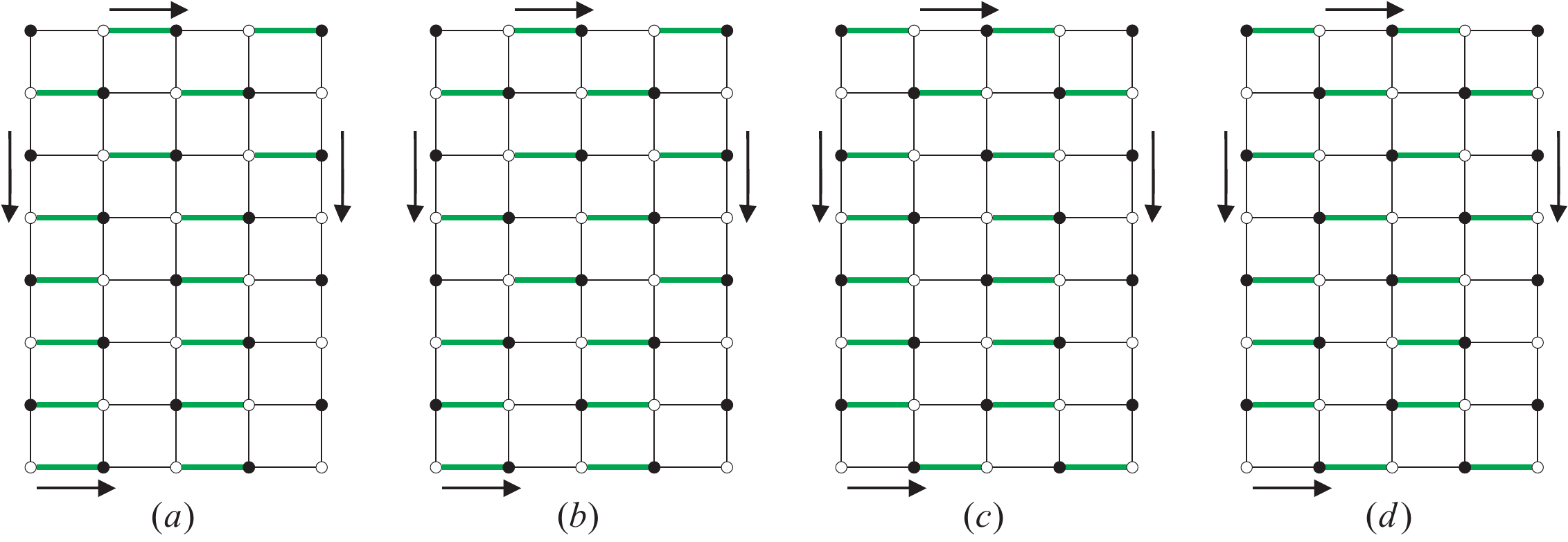}
\caption{\label{obsess} Four tilings $t_0$, $t_1$, $t'_0$ and $t'_1$ of $T(7,4,1)$.}
\end{figure}

By the induction on $k$, we are to prove that $\text{Flux}(t_k)=(-k-1,0)$ and $\text{Flux}(t'_k)=(k+1,0)$ for $k\in Z_{\lfloor\frac{n}{2}\rfloor-1}$.
For $k=0$, $\text{Flux}(t_0)=[t_0-t_\oplus]$ is a closed 1-chain $$v_{1,0}v_{1,m-1}+v_{1,m-1}v_{1,m-2}+\cdots +v_{1,1}v_{1,0}.$$ That is, $\text{Flux}(t_0)=[t_0-t_\oplus]=(-1,0)$. Suppose that $\text{Flux}(t_s)=(-s-1,0)$ for $0\leq s<\lfloor\frac{n}{2}\rfloor-1$. We will prove that $\text{Flux}(t_{s+1})=(-s-2,0)$. Noting that all edges of $t_s$ and $t_{s+1}$ are the same except for horizontal edges lying in the $2s+5$ rows of $T(n,m,r)$ form top to bottom, we obtain that $[t_{s+1}-t_s]$ is a closed 1-chain $$v_{2s+3,0}v_{2s+3,m-1}+v_{2s+3,m-1}v_{2s+3,m-2}+\cdots +v_{2s+3,1}v_{2s+3,0}.$$ Thus, $[t_{s+1}-t_s]=(-1,0)$, and $$\text{Flux}(t_{s+1})=[t_{s+1}-t_\oplus]=[t_{s+1}-t_s]+[t_{s}-t_\oplus]=(-1,0)+\text{Flux}(t_{s})=(-s-2,0).$$
By the induction, we obtain that $\text{Flux}(t_k)=(-k-1,0)$ for $k\in Z_{\lfloor\frac{n}{2}\rfloor-1}$.

Obviously, $t'_k$ is obtained by translating $t_k$ one square up. Hence $[t'_0-t_\oplus]$ and $[t'_{s+1}-t'_s]$ are also 1-chains consisting of all horizontal edges of $T(n,m,r)$ on the second and $2s+4$ rows, respectively, from top to bottom whose direction is horizontally to right. Therefore, we have $[t'_0-t_\oplus]=(1,0)$ and $[t'_{s+1}-t'_s]=(1,0)$. By the induction on $k$, we can prove that $\text{Flux}(t'_k)=(k+1,0)$ for $k\in Z_{\lfloor\frac{n}{2}\rfloor-1}$.

Note that $\text{Flux}(t_\oplus)=(0,0)$. Since values of flux of such $2\lfloor\frac{n}{2}\rfloor-1$ tilings are different pairwise, the flip graph of $T(n,m,r)$ has at least $2\lfloor\frac{n}{2}\rfloor-1$ non-singleton components by Corollary \ref{flipconnected}.
\end{proof}

For $T(2n,2m,2m)$, we can obtain a better lower bound for the number of non-singleton components of the flip graph.

\begin{thm}\label{non-singleton3} For $n, m\geq 2$, $\mathcal{T}(T(2n,2m,2m))$ has at least $2n+2m-3$ non-singleton components.
\end{thm}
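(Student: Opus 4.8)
The plan is to keep the $2n-1$ components furnished by Theorem~\ref{non-singleton} and to produce $2m-2$ additional ones living in the orthogonal homology direction, exploiting that $T(2n,2m,2m)$ is the standard torus and hence symmetric in its two coordinate directions. Write $\rho=[z_0]$ for the class of a row cycle and $\gamma$ for the class of a single column cycle $v_{0,j}v_{1,j}+\cdots+v_{2n-1,j}v_{0,j}$; these form a basis of $H_1(\boldsymbol T,Z)=Z^2$. Applying Theorem~\ref{non-singleton} to $T(2n,2m,2m)$ (which has $2n$ rows, so the index range is $k\in Z_{n-1}$) gives the base tiling $t_\oplus$ together with the tilings $t_k,t_k'$, $k\in Z_{n-1}$, whose fluxes are the $2n-1$ pairwise distinct classes $a\rho$ with $a\in\{-(n-1),\dots,n-1\}$.

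Next I would mirror this construction in the vertical direction. Let $\hat X_j=\{v_{2i,j}v_{2i+1,j}:i\in Z_n\}$ and $\hat Y_j=\{v_{2i+1,j}v_{2i+2,j}:i\in Z_n\}$ (indices mod $2n$) be the two phases tiling the $j$-th column, which is a cycle of even length $2n$, and set $\hat t_\oplus=\bigcup_{j\in Z_{2m}}\hat X_j$. Defining $\hat t_k,\hat t_k'$ for $k\in Z_{m-1}$ exactly as $t_k,t_k'$ but switching columns to phase $\hat Y$ in place of switching rows, and repeating verbatim the induction in the proof of Theorem~\ref{non-singleton} with rows and columns interchanged, I would obtain $[\hat t_k-\hat t_\oplus]=-(k+1)\gamma$ and $[\hat t_k'-\hat t_\oplus]=(k+1)\gamma$, since each column switched from $\hat X$ to $\hat Y$ contributes one copy of the column cycle $\gamma$.

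The crux is to compare the two base tilings, and I claim $[\hat t_\oplus-t_\oplus]=0$. Since $t_\oplus$ consists of horizontal edges and $\hat t_\oplus$ of vertical edges, they share no edge, so $t_\oplus\oplus\hat t_\oplus$ is a disjoint union of cycles; because $2n$ and $2m$ are even, these cycles are precisely the boundaries of the unit squares whose top-left corners are the vertices $v_{2i,2k}$ with $i\in Z_n$, $k\in Z_m$. Each such boundary bounds a single $2$-cell of $\boldsymbol T$ and is therefore null-homologous, whence $[\hat t_\oplus-t_\oplus]=0$. Consequently $\text{Flux}(\hat t_k)=[\hat t_k-\hat t_\oplus]+[\hat t_\oplus-t_\oplus]=-(k+1)\gamma$ and $\text{Flux}(\hat t_k')=(k+1)\gamma$, giving $2m-2$ fluxes of the form $b\gamma$ with $b\in\{\pm1,\dots,\pm(m-1)\}$.

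Finally I would assemble the two families. As $\rho$ and $\gamma$ are independent, the $2n-1$ values $a\rho$ and the $2m-2$ nonzero values $b\gamma$ are pairwise distinct, for $2n+2m-3$ distinct fluxes in all. Each associated tiling is a brick-wall pattern in which two neighbouring rows (resp.\ columns) carry the same phase, since at most $n-1<2n$ rows, resp.\ $m-1<2m$ columns, have been switched; hence it contains two adjacent parallel dominoes sharing a long edge and therefore admits a flip, placing it in a non-singleton component. By Corollary~\ref{flipconnected} distinct fluxes lie in distinct components, so $\mathcal T(T(2n,2m,2m))$ has at least $2n+2m-3$ non-singleton components. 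I expect the identity $[\hat t_\oplus-t_\oplus]=0$ to be the only real obstacle: it is exactly what prevents the vertical family from acquiring a spurious $\rho$-component and colliding with the horizontal one, and it relies on both side lengths being even so that $t_\oplus\oplus\hat t_\oplus$ decomposes into single faces.
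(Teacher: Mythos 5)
Your proposal is correct and follows essentially the same route as the paper: the same flux invariant $[t-t_\oplus]\in H_1(\boldsymbol{T},Z)$, the same two brick-wall families (horizontal-phase switches from Theorem~\ref{non-singleton} plus their column analogues), and the same appeal to Corollary~\ref{flipconnected} to separate the $2n+2m-3$ distinct flux values. The only difference is cosmetic: you compute the vertical family's base flux by inserting the all-vertical tiling $\hat t_\oplus$ and checking $[\hat t_\oplus-t_\oplus]=0$ via the decomposition of $t_\oplus\oplus\hat t_\oplus$ into unit squares, whereas the paper evaluates $[t^0-t_\oplus]$ directly as $n(m-1)$ square boundaries plus a cycle homologous to $z_0'$ --- and you are more explicit than the paper in verifying that each tiling actually admits a flip, which is a welcome touch.
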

\begin{proof}
For $k\in Z_{m-1}$, let $$t^k=(\bigcup_{j\in Z_{k+1}}F'_{2j+1})\,\bigcup\,(\bigcup_{i\in I}W_{i}) \text{ where }I=Z_{2m}\setminus \{2j+1|j\in Z_{k+1}\}, \text{ and }$$ $$t'^k=(\bigcup_{j\in Z_{k+1}}F'_{2j})\,\bigcup\,(\bigcup_{i\in I}W_{i}) \text{ where }I=Z_{2m}\setminus \{2j|j\in Z_{k+1}\}$$ be $2m-2$ tilings of $T(2n,2m,2m)$ consisting of vertical edges.
\begin{figure}[h]
\centering
\includegraphics[height=3cm,width=16.5cm]{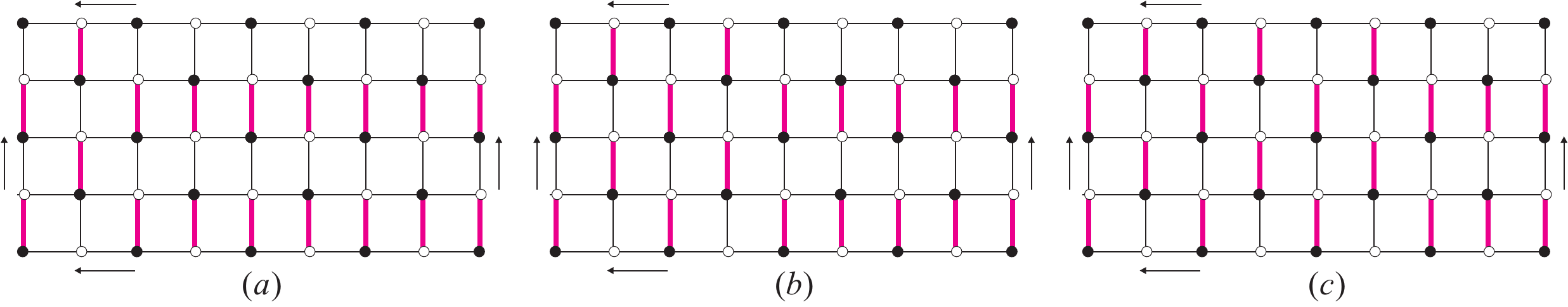}
\caption{\label{homoeven}Three tilings $t^0$, $t^1$ and $t^2$ of $T(4,8,8)$.}
\end{figure}
Obviously, $t'^k$ is obtained by translating $t^k$ one square left for $k\in Z_{m-1}$. For example, tilings $t^0$, $t^1$ and $t^2$ of $T(4,8,8)$ are shown as (a), (b) and (c) in Figure \ref{obsess}.
Next we are to prove that $\text{Flux}(t^k)=(0,k+1)$ and $\text{Flux}(t'^k)=(0,-k-1)$ for $k\in Z_{m-1}$ by the induction on $k$.

For $k=0$, we consider the homology class $[t^0-t_\oplus]$ shown as in Figure \ref{homoeven}(a), which consists of boundaries of $n(m-1)$ directed unit squares and a closed 1-chain with $4n$ directed edges.
\begin{figure}[h]
\centering
\includegraphics[height=3cm,width=12cm]{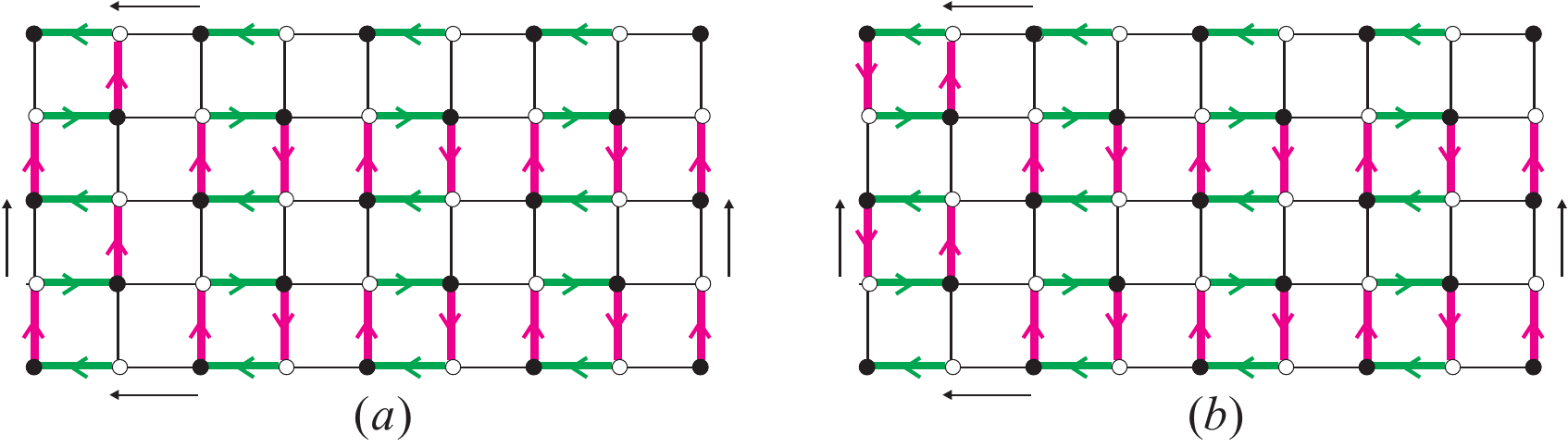}
\caption{\label{homoeven}Closed 1-chain $[t^0-t_\oplus]$ and $[t^0-t_\oplus]-[z'_0]$.}
\end{figure}

Recall that $$z'_0=v_{2n-1,0}v_{2n-2,0}+v_{2n-2,0}v_{2n-3,0}+\cdots +v_{1,0}v_{0,0}$$ is a 1-chain with $[z'_0]=(0,1)$. Since $[t^0-t_\oplus]-[z'_0]$ consists of boundaries of $nm$ directed unit squares shown in Figure \ref{homoeven}(b), which belongs to $B_1(T,Z)$, we have $$\text{Flux}(t^0)=[t^0-t_\oplus]=[z'_0]=(0,1).$$

Suppose that $\text{Flux}(t^s)=(0, s+1)$ for $0\leq s<m-1$. We will prove that $\text{Flux}(t^{s+1})=(0, s+2)$. Noting that all edges of $t^s$ and $t^{s+1}$ are the same except for vertical edges lying in $(2s+3)$-column of $T(2n,2m,2m)$. Hence $[t^{s+1}-t^s]$ is a closed 1-chain $$v_{2n-1,2s+3}v_{2n-2,2s+3}+v_{2n-2,2s+3}v_{2n-3,2s+3}+\cdots + v_{0,2s+3}v_{2n-1,2s+3},$$ and $[t^{s+1}-t^{s}]=(0,1)$. Thus, we have $$\text{Flux}(t^{s+1})=[t^{s+1}-t_\oplus]=[t^{s+1}-t^s]+[t^{s}-t_\oplus]=(0,1)+\text{Flux}(t^{s})=(0,s+2).$$

Obviously, $t'^k$ is obtained by translating $t^k$ one square left. Hence $[t'^0-t_\oplus]=[-z'_0]$, and $\text{Flux}(t'^0)=[t'^0-t_\oplus]=-[z'_0]=(0,-1)$.
Since $[t'^{s+1}-t'^s]$ is a closed 1-chain $$v_{2n-1,2s+2}v_{0,2s+2}+v_{0,2s+2}v_{1,2s+2}+\cdots +v_{2n-2,2s+2}v_{2n-1,2s+2},$$ and $[t'^{s+1}-t'^s]=[-z'_0]=(0,-1)$.
Therefore, we have $$\text{Flux}(t'^{s+1})=[t'^{s+1}-t_\oplus]=[t'^{s+1}-t'^s]+[t'^{s}-t_\oplus]=(0,-1)+\text{Flux}(t'^{s})=(0,-s-2).$$
By the induction on $k$, we prove the result.

By the proof of Theorem \ref{non-singleton}, we obtain that $\text{Flux}(t_k)=(k+1,0)$ and $\text{Flux}(t'_k)=(-k-1,0)$ for $k\in Z_{n-1}$ and $\text{Flux}(t_\oplus)=(0,0)$.
Since values of flux of such $2n+2m-3$ tilings are different pairwise, the flip graph of $T(2n,2m,2m)$ has at least $2n+2m-3$ non-singleton components by Corollary \ref{flipconnected}.
\end{proof}

Example \ref{ex444} shows that the lower bound in Theorem \ref{non-singleton3} is tight.

\section{\normalsize An application to forcing spectra of quadriculated tori}
We now prove Theorem \ref{qpp1} by using a lemma due to Afshani et al. \cite{5}.
\begin{lem}\rm{\cite{5}} \label{4.3} If $M$ is a perfect matching of a graph $G$ and $C$ is an $M$-alternating cycle of length 4, then the symmetric difference on $M$ along $C$ does not change the forcing number by more than 1.
\end{lem}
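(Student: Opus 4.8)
The plan is to reduce everything to the standard cycle characterization of forcing sets, which I would first record. Writing $f(M)$ for the forcing number of a perfect matching $M$, a subset $S\subseteq M$ is a forcing set of $M$ if and only if every $M$-alternating cycle of $G$ contains an edge of $S$: if some $M$-alternating cycle $D$ avoids $S$, then $M\oplus E(D)$ is a second perfect matching containing $S$, while conversely any perfect matching $M'\supseteq S$ yields $M\oplus M'$, a disjoint union of $M$-alternating cycles, each of which would have to meet $S$, forcing $M\oplus M'=\emptyset$. Alongside this I would note the elementary fact that an $M$-alternating cycle passing through a vertex $v$ must use the unique $M$-edge incident with $v$; consequently such a cycle meets $S$ as an edge set exactly when it meets $V(S)$ as a vertex set. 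These two statements are the whole engine of the argument.

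Next I would fix notation. Let $C=v_1v_2v_3v_4$ have $M$-edges $e_1=v_1v_2$, $e_2=v_3v_4$ and non-matching edges $e_1'=v_2v_3$, $e_2'=v_4v_1$, and set $M'=M\oplus E(C)$, so that $e_1',e_2'\in M'$. Since $C$ is also $M'$-alternating and $M=M'\oplus E(C)$, the situation is symmetric in $M$ and $M'$, so it suffices to prove $f(M')\le f(M)+1$. The key preliminary observation I would extract is this: taking a minimum forcing set $S$ of $M$ with $|S|=f(M)$, the fact that $C$ is an $M$-alternating cycle forces $S\cap E(C)\neq\emptyset$; but $S\subseteq M$ and the only edges of $C$ lying in $M$ are $e_1,e_2$, so $S\cap\{e_1,e_2\}\neq\emptyset$. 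This single inequality $|S\cap\{e_1,e_2\}|\ge 1$ is exactly what makes the cardinality count work.

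I would then define $S'=(S\setminus\{e_1,e_2\})\cup\{e_1',e_2'\}\subseteq M'$, so that $|S'|\le |S|-1+2=f(M)+1$, and verify that $S'$ forces $M'$. Suppose for contradiction that some $M'$-alternating cycle $D$ avoids $S'$. Then $D$ avoids both $e_1'$ and $e_2'$; since these are precisely the $M'$-edges incident with the vertices of $C$, the vertex-versus-edge observation shows $D$ cannot pass through any $v_i$, i.e. $D$ is vertex-disjoint from $C$. On every edge not meeting $C$ the matchings $M$ and $M'$ coincide, so $D$ is also $M$-alternating; moreover $D$ avoids $e_1,e_2$ (being disjoint from $C$) and avoids $S\setminus\{e_1,e_2\}$, hence $D$ avoids $S$ entirely, contradicting that $S$ forces $M$. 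Therefore $S'$ is a forcing set of $M'$ and $f(M')\le f(M)+1$; the symmetric argument gives $f(M)\le f(M')+1$, and together these yield $|f(M')-f(M)|\le 1$.

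I expect the only delicate points to be the justification of the forcing-set/alternating-cycle characterization (which I would dispatch in one line as above) and the passage from ``$D$ avoids the two flipped edges'' to ``$D$ is genuinely vertex-disjoint from $C$'' rather than merely edge-avoiding; the remark that an alternating cycle through $v$ uses the matching edge at $v$ is what closes this gap, and everything else is bookkeeping. The rest of the proof is then purely combinatorial set manipulation.
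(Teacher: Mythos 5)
Your proof is correct, and there is nothing in the paper to compare it against: the paper states this lemma purely as a citation to Afshani, Hatami and Mahmoodian \cite{5} and supplies no proof of its own, so your write-up fills a genuine gap rather than duplicating an argument. What you give is the standard proof, and essentially the one in the cited reference: characterize forcing sets of $M$ as the subsets $S\subseteq M$ meeting every $M$-alternating cycle, observe that a minimum forcing set $S$ must therefore contain $e_1$ or $e_2$ (the two $M$-edges of $C$, since $S\subseteq M$), and verify that $S'=(S\setminus\{e_1,e_2\})\cup\{e_1',e_2'\}$ forces $M'=M\oplus E(C)$, which gives $f(M')\le |S'|\le f(M)+1$; the relation $M=M'\oplus E(C)$ makes the situation symmetric and yields the reverse inequality. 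The two places where a blind write-up could plausibly fail are both handled correctly: you prove the alternating-cycle characterization in both directions (including that $M\oplus M'$ decomposes into $M$-alternating cycles none of which can contain an edge of $S\subseteq M\cap M'$), and you close the gap between ``$D$ avoids $e_1',e_2'$'' and ``$D$ is vertex-disjoint from $C$'' via the observation that an $M'$-alternating cycle through a vertex must use the unique $M'$-edge at that vertex, after which $D$ is $M$-alternating, avoids $e_1,e_2$ by disjointness from $V(C)$, and hence avoids all of $S$ --- the desired contradiction.
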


\noindent\textbf{Proof of  Theorem \ref{qpp1}.}

By Theorem \ref{bipartite}, it suffices to prove that Theorem \ref{qpp1} holds separately for quadriculated torus $T(2n+1,2m,2r)$,
$T(2n,2m,2r-1)$ and $T(2m,2n+1,r)$.

By Theorem \ref{t21} and Remark \ref{re12mr}, simple quadriculated torus $R_t(T(2n+1,2m,2r))$ has exactly two components $H_1$ and $H_2$ where $n\geq 0$, $m\geq2$ and $1\leq r\leq m$. By the definition of resonance graph, any two perfect matchings of $H_i$ can
be obtained from each other by some flips along some faces of $T(2n+1,2m,2r)$ for $i\in\{1,2\}$. By Lemma \ref{4.3}, the forcing numbers of all perfect matchings in $H_i$
form an integer-interval.

Two perfect matchings $M^1$ and $M^2$ are called \emph{equivalent} if there is an automorphism of the graph which maps $M^1$ to $M^2$. By Lemma \ref{auto},
$\varphi$ is an automorphism of $T(2n+1,2m,2r)$. Since $\varphi(M_1)=M_2$, $M_1$ and $M_2$ are equivalent. By the proof of Theorem \ref{t21}, $\varphi$ induces an
isomorphism from $H_1$ to $H_2$. Thus, for any perfect matching $M$ of $H_1$, there exists a perfect matching $\varphi(M)$ of $H_2$ such that $M$ and $\varphi(M)$ are equivalent.
Since two equivalent perfect matchings have the same forcing number, the sets of forcing numbers of all perfect matchings in $H_1$ and $H_2$ are the same. Thus forcing numbers
of all perfect matchings of $T(2n+1,2m,2r)$ form an integer-interval. That is, the forcing spectrum of $T(2n+1,2m,2r)$ is continuous.

By Lemma \ref{huafa}, $T(2m,2n+1,r)$ and $T(2n,2m,2r-1)$ can be reduced into some quadriculated torus $T(2n'+1,2m',2r')$ where $n\geq 0$, $m\geq2$ and $1\leq r\leq m$. By above arguments, we obtain that Theorem \ref{qpp1} holds for all non-bipartite quadriculated tori. ~~~~~~~~~~~~~~~~~~~~~~~~~~~~~~~~~~~~~~~~~~~~~~~~~~~~~~~~$\square$

For bipartite quadriculated tori, their forcing spectra are not necessarily continuous.
\begin{ex}By computing forcing numbers of all 18656 perfect matchings of $T(3,10,1)$ by computer, we obtain a set $\{3,5,6,7,8\}$, which has one gap 4.

By calculating forcing numbers of all 537636 perfect matchings by computer, we obtain that $\text{Spec}(T(4,10,10))=\{4,6,7,8,9,10\}$, which has one gap 5.
\end{ex}

\noindent{\normalsize \textbf{Acknowledgments}}

The authors are grateful to the reviewer for pointing out a criterion of bipartite quadriculated tori (see Theorem \ref{bipartite}) and suggesting to consider the components of flip graphs via homology theory.
This work is supported by NSFC\,(Grant No. 12271229), start-up funds of Inner Mongolia Autonomous Region (Grant No. DC2400002165) and Inner Mongolia University of Technology (Grant No. BS2024038).

\end{document}